\def\newaliasedtheorem#1[#2]#3{
  \newaliascnt{#1@alt}{#2}
  \newtheorem{#1}[#1@alt]{#3}
  \expandafter\newcommand\csname #1@altname\endcsname{#3}
}
\numberwithin{equation}{section}
\newtheoremstyle{slanted}{\topsep}{\topsep}{\slshape}{}{\bfseries}{.}{.5em}{}
\theoremstyle{plain}
\newtheorem{theorem}{Theorem}[section]
\theoremstyle{definition}
\theoremstyle{remark}
\newcommand{\setN}{\mathbb{N}}
\newcommand{\setR}{\mathbb{R}}
\newcommand{\setZ}{\mathbb{Z}}
\newcommand{\cE}{\mathcal{E}}
\newcommand{\cV}{\mathcal{V}}
\newcommand{\eps}{\varepsilon}
\let\phi\varphi
\newcommand{\di}{\mathop{}\!\mathrm{d}}
\newcommand{\loc}{{\rm loc}}
\DeclareMathOperator{\supp}{supp}
\newcommand{\Ch}{{\sf Ch}}
\DeclareMathOperator{\Lip}{Lip}
\DeclareMathOperator{\UG}{\mathrm{UG}}
\newcommand{\dist}{\mathsf{d}}
\newcommand{\meas}{\mathfrak{m}}
\DeclareMathOperator{\CD}{CD}
\DeclareMathOperator{\vol}{\mathrm{vol}}
\DeclareMathOperator{\PI}{PI}
\newfont{\tmpf}{cmsy10 scaled 2500}
\title[Adimensional weighted Sobolev inequalities]{Adimensional weighted Sobolev inequalities in PI spaces}
\author{David Tewodrose}
\address{D. Tewodrose, Université Libre de Bruxelles, Service d’Analyse, CP 218, Boulevard du Triomphe, B-1050 Bruxelles, Belgique.}
\email{David.Tewodrose@ulb.ac.be}
\begin{document}

\maketitle

\begin{abstract}
We provide a family of global weighted Sobolev inequalities and Hardy inequalities on $\PI$ spaces with possibly non-maximal volume growth. Our results apply notably to non-trivial Ahlfors regular spaces like Laakso spaces and Kleiner-Schioppa spaces.
\end{abstract}

\tableofcontents

\section{Introduction}

On a smooth complete non-compact Riemannian manifold $(M,g)$ of dimension $n>2$ equipped with the canonical Riemannian volume measure $\vol_g$, the classical Sobolev inequality writes as
\begin{equation}\label{eq:Sobolev}
\left( \int_M f^{2^\star} \di \vol_g \right)^{2/2^\star} \le C_S \int_M |\nabla f|^{2} \di \vol_g
\end{equation}
for any $f \in H^{1,2}(M)$, where $2^\star:=2n/(n-2)$ is such that $1/2 - 1/2^\star = 1/n$, and $C_S>0$ does not depend on $f$. When the manifold has non-negative Ricci curvature, a necessary condition for this inequality to hold is the existence of a constant $C>0$ such that 
\begin{equation}\label{eq:volgro}
\vol_g(B_r(o)) \ge C r^n
\end{equation}
for some $o \in X$ and any $r\ge 1$, i.e.~the manifold must have Euclidean volume growth. In case $n$ in \eqref{eq:volgro} is replaced by some $\eta \in [1,n)$, the Sobolev inequality does not hold (see e.g.~\cite[Prop.~2.21]{MR2481740}), but under the stronger assumption
\begin{equation}\label{eq:rev}
\frac{\vol_g(B_R(o))}{\vol_g(B_r(o))} \ge C \left( \frac{R}{r} \right)^\eta \qquad \forall \, 0\le r\le R<+\infty
\end{equation}
a \textit{weighted} Sobolev inequality does, namely
\begin{equation}\label{eq:weighted}
\left( \int_M f^{2^\star} \di \mu \right)^{2/2^\star} \le C_{W} \int_M |\nabla f|^{2} \di \vol_g
\end{equation}
for any $f \in H^{1,2}(M)$, where $\mu$ is the measure absolutely continuous with respect to $\vol_g$ with density $w:=\vol(B_{r_o(\cdot)}(o))^{2^\star/n}r_o(\cdot)^{-2^\star}$, the function $r_o(\cdot)$ is the Riemannian distance to $o$, and $C_W$ does not depend on $f$.  Assumption \eqref{eq:rev} is usually called a reverse doubling condition. 

Inequality \eqref{eq:weighted} was established by V.~Minerbe in \cite{MR2481740} who built his proof upon a patching procedure going back at least to the work of A.~Grigor'yan and L.~Saloff-Coste \cite{MR2149405}. Later on, H.-J.~Hein extended \eqref{eq:weighted} to the class of Riemannian manifolds with quadratically decaying Ricci curvature \cite{MR2801635}, after what the author provided another extension of \eqref{eq:weighted} to possibly non-smooth metric measure spaces satisfying the Lott-Sturm-Villani $\CD(0,N)$ curvature-dimension condition \cite{Tew}. 

The goal of this note is to establish a family of weighted Sobolev inequality extending \eqref{eq:weighted} to the general context of complete $\PI_{p}$ metric measure spaces $(X,\dist,\meas)$ satisfying a reverse doubling condition.  Here and throughout the note ``$\PI_{p}$'' qualifies spaces supporting a global doubling condition and a weak $(p,p)$ Poincaré inequality for some $p \ge 1$: we refer to Section 2 for the precise definitions of these two conditions.  Moreover, all the metric measure spaces considered in this note are always tacitly assumed separable.

Before stating our main results, let us make three preliminary remarks. First, \eqref{eq:weighted} is an $L^2$ weighted Sobolev inequality, and we prove $L^p$ versions of this inequality for any $1 \le p < Q$, where $Q$ is the doubling dimension of the space whose definition we recall in Section 2.  Secondly,  the weight in \eqref{eq:weighted} involves the dimension $n$ of the manifold; in order to remove this $n$ we introduce two parameters $s \le t$ such that
\begin{equation}\label{eq:s,t}
0 \le \frac{1}{s}-\frac{1}{t} \le \frac{1}{Q}
\end{equation}
and formulate our inequalities in terms of these parameters: this leads to what we call \textit{adimensional} inequalities. On a Riemannian manifold whose doubling dimension equals the Hausdorff dimension (i.e.~$Q=n$),  the second inequality in \eqref{eq:s,t} is an equality when $s=2$ and $t=2^\star$.  Thirdly, an important feature of our results is that they do not involve any curvature condition: this is a major improvement compared to \cite{MR2481740,MR2801635,Tew}.

%This note aims at showing that no curvature-dimension condition is needed to get such an , as revealed by our main result which involves the concept of a PI space and its $s$-Cheeger energy $\Ch_s$, see Section 2 for details.

Throughout the paper we write $C=C(a_1,\ldots,a_k)$ to express that a constant $C$ depends on the parameters $a_1,\ldots,a_k$ only.  Here is the main theorem of this note.

\begin{theorem}\label{eq:main}
Let $(X,\dist,\meas)$ be a complete $\PI_{p}$ space with doubling dimension $Q>1$, Poincaré exponent $p \ge 1$ and Poincaré constants $C_P>0$ and $\lambda\ge 1$. Assume that $p<Q$ and set $p^{\star}:=\frac{pQ}{Q - p}\,$. Assume that there exists $o \in X$ and $\eta \in (p,Q]$ such that:
\begin{equation}\label{eq:reverse}
\frac{\meas(B_R(o))}{\meas(B_r(o))} \ge C_o \left( \frac{R}{r} \right)^\eta \qquad \forall \,  0<r\le R<+\infty.
\end{equation}
Then for any $s \in [p,\eta)$ and $t \in [s,p^{\star}]$ there exists $C=C(Q,p,C_P,\lambda,\eta,C_o,s,t)>0$ such that
\begin{equation}\label{eq:weightedSobolev}
\left( \int_{X} |f|^{t} \di \mu_{s,t} \right)^{1/t} \le C \Ch_s(f)^{1/s}
\end{equation}
for any $f \in H^{1,s}(X,\dist,\meas)$, where $\di \mu_{s,t}(\cdot):=\meas(B_{\dist(o,\cdot)}(o))^{t/s-1}\dist(o,\cdot)^{-t}\di\meas(\cdot)$.
\end{theorem}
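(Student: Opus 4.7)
The plan is to implement Minerbe's annular patching strategy \cite{MR2481740,MR2801635,Tew} in the general PI setting, with the dimensional weight of those works replaced by the adimensional density of $\mu_{s,t}$. Fix $\tau > 1$ large enough, decompose $X \setminus \{o\}$ into dyadic annuli $A_k := B_{\tau^{k+1}}(o) \setminus B_{\tau^k}(o)$, $k \in \setZ$, and write $r_k := \tau^k$, $B_k := B_{r_k}(o)$, $f_{A_k} := \meas(A_k)^{-1} \int_{A_k} f \, \di\meas$. By density of $\Lipbs(X)$ in $H^{1,s}$ and truncation, it suffices to treat $f \in \Lipbs(X)$, so that $f_{A_k} = 0$ for $k$ large.

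The key local input, provided by Haj\l{}asz--Koskela chaining from doubling plus the weak $(p,p)$-Poincar\'e inequality (and \emph{not} using \eqref{eq:reverse}), is a Sobolev--Poincar\'e estimate on each annulus. Using that the density of $\mu_{s,t}$ is comparable on $A_k$ to the constant $\meas(B_k)^{t/s-1} r_k^{-t}$, this estimate translates, via an exact cancellation of radius and volume exponents, into the \emph{adimensional} bound
\begin{equation*}
\left( \int_{A_k} |f - f_{A_k}|^t \, \di\mu_{s,t} \right)^{1/t} \le C \left( \int_{\lambda A_k} g^s \, \di\meas \right)^{1/s}
\end{equation*}
for any upper gradient $g$ of $f$, with $C$ independent of $k$. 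Raising to the $t$-th power, summing over $k$, and using bounded overlap of $\{\lambda A_k\}$ together with the elementary inequality $\sum_k a_k^{t/s} \le (\sum_k a_k)^{t/s}$ (valid for $t \ge s$) controls the oscillation $f - \sum_k f_{A_k} \mathbf{1}_{A_k}$ in $L^t(\mu_{s,t})$ by $C\, \Ch_s(f)^{1/s}$.

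The harder step, and the one where \eqref{eq:reverse} enters, is the constant part $\sum_k |f_{A_k}|^t \mu_{s,t}(A_k)$. A second Poincar\'e chaining on a ball containing $A_k \cup A_{k+1}$ yields the comparison
\begin{equation*}
|f_{A_k} - f_{A_{k+1}}| \le C\, r_k \, \meas(B_k)^{-1/s} \left( \int_{\lambda' B_{k+1}} g^s \, \di\meas \right)^{1/s}.
\end{equation*}
Writing $f_{A_k} = \sum_{j \ge k}(f_{A_j} - f_{A_{j+1}})$, applying Minkowski in $\ell^t(k)$, using $\mu_{s,t}(A_k) \le C\, \meas(B_k)^{t/s} r_k^{-t}$ and the comparison above, the bound reorganizes as a sum over the shift $m = j - k \ge 0$. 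For each $m$, the bounded overlap of $\{\lambda' B_{k+m+1}\}_k$ absorbs the inner $k$-sum into $\|g\|_{L^s(X)}^t$, while the reverse doubling \eqref{eq:reverse} bounds the remaining ratios $(r_{k+m}/r_k)^t (\meas(B_k)/\meas(B_{k+m}))^{t/s}$ by $C\, \tau^{mt(1-\eta/s)}$. Taking $t$-th roots and summing over $m$ produces a geometric series of common ratio $\tau^{1-\eta/s} < 1$, convergent precisely because $\eta > s$, i.e., because $s \in [p,\eta)$. This yields a bound by $C\, \Ch_s(f)^{1/s}$ for the constant part, and combined with the oscillation bound establishes \eqref{eq:weightedSobolev}; the quantitative dependence on $(Q,p,C_P,\lambda,\eta,C_o,s,t)$ is tracked by inspection through the doubling, Poincar\'e, and reverse-doubling constants.
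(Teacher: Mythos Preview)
Your overall plan---split $f$ into annular oscillations plus annular means and control each piece---matches the paper's patching strategy, but the argument as written has a genuine gap in the ``harder step''.

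You bound $|f_{A_k}-f_{A_{k+1}}|$ by Poincar\'e on a \emph{ball} $\lambda' B_{k+1}$ centered at $o$, and then assert ``bounded overlap of $\{\lambda' B_{k+m+1}\}_k$'' for fixed $m$. But for fixed $m$ these are the nested balls $B_{c\tau^{k+m+1}}(o)$; their overlap is infinite, not bounded. Concretely, after Minkowski and reverse doubling your inner sum becomes $\tau^{mt(1-\eta/s)}\sum_k \bigl(\int_{\lambda' B_{k+m+1}} g^s\bigr)^{t/s}$, and this $k$-sum diverges whenever $g\not\equiv 0$ (the integrals over increasing balls tend to $\|g\|_{L^s}^s$, not to $0$). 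To make the summation work you need the comparison of adjacent means with right-hand side supported in an \emph{annulus}, and that in turn requires annuli around $o$ to be connected---the RCA property. The same issue is already latent in your first step: a Sobolev--Poincar\'e inequality on $A_k$ with right side on an enlarged annulus $\lambda A_k$ fails outright if $A_k$ is disconnected (take $f$ equal to different constants on the components, $g=0$), and Haj\l{}asz--Koskela chaining inside $A_k$ presupposes chains that stay in the annulus.

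This is exactly the obstacle the paper works to remove. It first proves (Proposition~\ref{prop:RCA}) that the reverse doubling hypothesis~\eqref{eq:reverse} with $\eta>p$ forces RCA, then replaces the raw annuli by their \emph{connected components} $U_{i,a}$ (the $\kappa$-decomposition), establishes the local Sobolev--Poincar\'e on each connected piece with right side in a neighboring annular region (Proposition~\ref{prop:locala}), and replaces your telescoping sum for the means by a discrete isoperimetric/Poincar\'e inequality on the finite-valence graph of components (Proposition~\ref{linear isoperimetric inequality}), where reverse doubling again supplies the convergent geometric series. In particular your claim that the local step ``does not use~\eqref{eq:reverse}'' is incorrect: reverse doubling enters twice, once (via RCA/connectivity) to make annular Poincar\'e inequalities with annular right side available, and once for the summability of the means.
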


We refer to \eqref{eq:Cheeger} and \eqref{eq:Sob} in Section 2 for the definition of the $s$-Cheeger energy $\Ch_s$ and its associated class of Sobolev functions $H^{1,s}(X,\dist,\meas)$.

Note that when $(X,\dist,\meas)=(M^n,g)$ and $(s,t)=(2,2^\star)$, then \eqref{eq:weightedSobolev} reduces to \eqref{eq:weighted}.

It is well-known that the doubling condition implies the reverse doubling one \eqref{eq:reverse} for some $\eta=\eta(C_D) \le 1$, see e.g.~\cite[Prop.~3.3]{MR3276002}, but in \eqref{eq:reverse} we assume $\eta > p > 1$ which is not a consequence of the doubling condition.

We call \textit{adimensional} the inequalities \eqref{eq:weightedSobolev} because they do not involve any notion of dimension; the doubling dimension $Q$ does provide an upper bound on the Hausdorff dimension of $(X,\dist)$ and the exponent $\eta$ bounds the Hausdorff dimension of any tangent cone at infinity of $(X,\dist)$ from below, but these weak notions of dimension intervene only in the critical values the dimension free parameters $t$ and $s$ can assume.

%Note that a metric measure space satisfying the assumptions of Theorem \ref{eq:main} is necessarily non-compact, connected and geodesic.

When $\eta=Q$, Theorem \ref{eq:main} applies to the case of Ahlfors $Q$-regular spaces, namely those metric measure spaces $(X,\dist,\meas)$ for which there exists a constant $C_A\ge 1$ such that $C_A^{-1} r^Q \le \meas(B_r(x)) \le C_A r^Q$ for any $x \in X$ and $r>0$.

\begin{corollary}\label{cor:Ahl}
Let $(X,\dist,\meas)$ be a complete Ahlfors $Q$-regular space satisfying a weak $(p,p)$ Poincaré inequality with $1 \le p<Q$. Then for any $o\in X$, any $s \in [p,Q)$ and any $t \in [s,p^\star]$,  there exists $C=C(Q,p,C_P,s,t)>0$ such that
\begin{equation}\label{eq:weightedSobolev2}
\left( \int_{X} |f|^{t} r_o^{Q(t/s-1)-1} \di \meas \right)^{1/t} \le  C_A^{1/s-1/t} C \Ch_s(f)^{1/s}
\end{equation}
for any $f \in H^{1,s}(X,\dist,\meas)$, where $r_o(\cdot):= \dist(o,\cdot)$.
\end{corollary}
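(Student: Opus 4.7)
The plan is to derive Corollary \ref{cor:Ahl} as a direct specialization of Theorem \ref{eq:main} to the Ahlfors-regular regime, with $\eta = Q$, then to convert the measure-valued weight $\meas(B_{r_o}(o))^{t/s-1}$ into an explicit power of $r_o$.

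First I would check the hypotheses of Theorem \ref{eq:main}. Ahlfors $Q$-regularity immediately yields the global doubling condition with doubling dimension $Q$, and the assumed weak $(p,p)$ Poincaré inequality provides the remaining PI$_p$ structure. For the reverse doubling condition, the two-sided bound $C_A^{-1} r^Q \le \meas(B_r(o)) \le C_A r^Q$ gives
\[
\frac{\meas(B_R(o))}{\meas(B_r(o))} \ge C_A^{-2} \left(\frac{R}{r}\right)^{Q} \qquad \forall\, 0<r\le R<+\infty,
\]
so \eqref{eq:reverse} holds with $\eta = Q$ and $C_o = C_A^{-2}$; the condition $\eta>p$ is guaranteed by the hypothesis $p<Q$.

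With these hypotheses in place, Theorem \ref{eq:main} supplies a constant $\tilde C$ depending on $Q,p,C_P,\lambda,C_A,s,t$ such that
\[
\left( \int_X |f|^t \di\mu_{s,t} \right)^{1/t} \le \tilde C \,\Ch_s(f)^{1/s}
\]
for every admissible pair $(s,t)$. Since $t \ge s$ forces $t/s - 1 \ge 0$, the lower Ahlfors bound yields the pointwise estimate $\meas(B_{r_o(x)}(o))^{t/s-1} \ge C_A^{-(t/s-1)} r_o(x)^{Q(t/s-1)}$; inserting this into the definition of $\mu_{s,t}$, integrating against $|f|^t r_o^{-t}\di\meas$, and taking $t$-th roots produces the inequality of the corollary, with the factor $C_A^{(t/s-1)/t} = C_A^{1/s-1/t}$ explicitly extracted in front.

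The only real task is constant-tracking: $C_A$ enters $\tilde C$ only through the reverse-doubling constant $C_o = C_A^{-2}$, and to obtain the clean dependence $C = C(Q,p,C_P,s,t)$ claimed in the corollary one must inspect how the constant in Theorem \ref{eq:main} scales with $C_o$ (and absorb the $\lambda$-dependence) so that the Ahlfors contributions combine precisely into the stated prefactor $C_A^{1/s-1/t}$. Beyond this bookkeeping, no additional analytic ingredients are required — the corollary is essentially the main theorem made explicit in a setting where the volume of balls is known up to multiplicative constants.
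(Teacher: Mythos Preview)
Your proposal is correct and matches the paper's intended (but unwritten) argument: the corollary is stated immediately after Theorem~\ref{eq:main} as a direct specialization with $\eta=Q$, and the paper gives no separate proof. Your observation about constant-tracking is apt --- the constant $\tilde C$ inherited from Theorem~\ref{eq:main} genuinely depends on $C_o=C_A^{-2}$ and on $\lambda$, so the claimed dependence $C=C(Q,p,C_P,s,t)$ in the corollary is slightly imprecise as stated; this is a looseness in the paper rather than a gap in your reasoning.
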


Our proof of Theorem \ref{eq:main} follows readily the lines of \cite{MR2481740, Tew}, but we provide details for the reader's convenience. We patch local Sobolev inequalities -- derived from the PI condition like in \cite{HajlaszKoskela} -- by means of a discrete Poincaré inequality. This latter inequality holds on a graph whose structure reflects the geometry of the space.

Patching local Poincaré inequalities instead of local Sobolev inequalities yields the following Hardy inequalities.

\begin{theorem}\label{th:hardy}
Let $(X,\dist,\meas)$, $Q$, $p$, $o$ and $\eta$ be as in Theorem \ref{eq:main}. Set $r_o(\cdot):=\dist(o,\cdot)$. Then for any $s \in [p,\eta)$, there exists $C=C(Q,C_o,\eta,s)>0$ such that for any $f \in H^{1,s}(X,\dist,\meas)$,
\begin{equation}\label{eq:Hardy}
\int_{X} |f|^{s} r_o^{-s} \di \meas \le C \Ch_s(f).
\end{equation}
\end{theorem}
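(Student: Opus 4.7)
The plan is to mirror the annular patching scheme outlined for Theorem~\ref{eq:main}, but to feed the argument with a local $(s,s)$-Poincaré inequality rather than a local Sobolev inequality. Since the Hardy inequality keeps the exponent $s$ on both sides, no gain in target integrability is sought, which is precisely the regime naturally suited to Poincaré-type control.

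By Fatou's lemma applied to a sequence of Lipschitz approximations, it suffices to prove \eqref{eq:Hardy} for $f \in \Liploc(X,\dist)$ with $g$ its local Lipschitz constant. I decompose $X$ into the dyadic annuli $A_k := B_{2^{k+1}}(o) \setminus B_{2^k}(o)$, $k \in \setZ$, set $c_k := \fint_{B_{2^k}(o)} f \di \meas$, and split via the triangle inequality
\[
\int_X |f|^s r_o^{-s}\di\meas \;\le\; 2^{s-1}\sum_{k \in \setZ} 2^{-ks}\Bigl(\int_{B_{2^{k+1}}(o)}|f - c_{k+1}|^s\di\meas \;+\; |c_{k+1}|^s \,\meas(A_k)\Bigr).
\]
The first summand is disposed of ball-by-ball by the $(s,s)$-Poincaré inequality---inherited from the assumed $(p,p)$-Poincaré via the standard Hajlasz--Koskela chain argument and Jensen (plus Keith--Zhong self-improvement when $s>p$)---yielding $\int_{B_{2^{k+1}}(o)} |f - c_{k+1}|^s\di\meas \le C\, 2^{ks}\int_{\lambda B_{2^{k+1}}(o)} g^s \di\meas$. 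Bounded overlap of the enlarged balls $\{\lambda B_{2^{k+1}}(o)\}_{k\in\setZ}$ then collapses the sum to $\le C\,\Ch_s(f)$.

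For the averaged term, reverse doubling forces $\meas(B_{2^k}(o)) \to +\infty$ as $k \to +\infty$, hence $|c_k| \le \meas(B_{2^k}(o))^{-1/s}\|f\|_{L^s} \to 0$, licensing the telescoping identity
\[
c_{k+1} = -\sum_{j \ge k+1}(c_{j+1}-c_j), \qquad |c_{j+1}-c_j| \le C\, 2^{j}\Bigl(\fint_{\lambda B_{2^{j+1}}(o)} g^s \di\meas\Bigr)^{1/s} =: u_j.
\]
A discrete H\"older inequality with geometric weight $2^{\gamma(j-k)}$, valid for any fixed $\gamma \in (0,\eta-s)$ (non-empty because $s<\eta$), yields $|c_{k+1}|^s \le C \sum_{j > k} u_j^s\, 2^{\gamma(j-k)}$. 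Swapping the order of summation and invoking the reverse-doubling bound $\meas(A_k) \le \meas(B_{2^{k+1}}(o)) \le C\, 2^{-(j-k)\eta}\meas(B_{2^j}(o))$ for $k < j$ collapses the inner geometric sum, leaving the residual $\sum_j u_j^s\, 2^{-js}\meas(B_{2^j}(o))$, which the doubling equivalence and bounded overlap control by $C\int_X g^s \di\meas$.

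The main obstacle is the exponent arithmetic: the strict inequality $s<\eta$ is what generates the positive gap $\eta-s>0$ needed for the geometric decay that closes the double summation in the third step; without it the telescope of the averages $c_k$ would not be summable against the weight. All remaining ingredients---Poincaré self-improvement, doubling, bounded overlap, Lipschitz density---are standard PI-space fare, so the proof essentially reduces to this arithmetic compatibility between the reverse-doubling exponent $\eta$ and the Sobolev exponent $s$.
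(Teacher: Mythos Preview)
Your argument has a genuine gap: the claim that the family $\{\lambda B_{2^{k+1}}(o)\}_{k\in\setZ}$ has bounded overlap is false. These are \emph{nested concentric} balls, so any point $x\neq o$ lies in all of them with $k$ large enough, i.e.~$\sum_{k\in\setZ} 1_{\lambda B_{2^{k+1}}(o)}(x)=+\infty$. After you apply the $(s,s)$ Poincar\'e inequality on the ball $B_{2^{k+1}}(o)$, the right-hand side is $C\,2^{ks}\int_{\lambda B_{2^{k+1}}(o)} g^s\di\meas$; multiplying by $2^{-ks}$ and summing over $k$ gives $\sum_k \int_{\lambda B_{2^{k+1}}(o)} g^s\di\meas$, which is infinite unless $g\equiv 0$. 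The same failure reappears verbatim at the end of your treatment of the averaged term, where the residual $\sum_j u_j^s\,2^{-js}\meas(B_{2^j}(o))$ is, up to doubling constants, again $\sum_j \int_{\lambda B_{2^{j+1}}(o)} g^s\di\meas$.

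The reason the paper does not run into this is that it never applies Poincar\'e on the full ball $B_{2^{k+1}}(o)$. Instead it uses the RCA property (Proposition~\ref{prop:RCA}) to build a $\kappa$-decomposition into connected pieces $U_{i,a}$ of annuli, and then establishes a Poincar\'e inequality \emph{on each such annular piece} (Proposition~\ref{prop:localP}), with right-hand side supported in a slightly larger annular neighbourhood $U_{i,a}^{\#}$. It is these neighbourhoods that have bounded overlap (property~(3) of the good covering), and this is what makes the patching Theorem~\ref{th:patching} close. Your direct dyadic scheme can be repaired along the same lines, but only once you replace the ball Poincar\'e by an annular one---and proving the latter is exactly where the RCA property and the connectedness analysis (which you bypass entirely) are needed. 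A minor side remark: Keith--Zhong self-improvement lowers the Poincar\'e exponent, not raises it; the $(s,s)$ Poincar\'e for $s\ge p$ that you need follows instead from the Haj\l asz--Koskela improvement $(1,p)\Rightarrow(p^\star,p)$ combined with H\"older, as in Proposition~\ref{prop:localSobolev}.
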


Let us finally single out two important classes of Ahlfors regular spaces falling in the framework of our results. The first one was introduced in \cite{MR1748917} by T.~Laakso who constructed, for any $Q>1$, an Ahlfors $Q$-regular metric measure space $(X_Q,\dist_Q,\meas_Q)$ supporting a weak $(1,1)$ Poincaré inequality. Though T.~Laakso detailed only the construction of compact spaces, he suggested a natural modification to get non-compact spaces with the same properties. The second one was introduced by B.~Kleiner and A.~Schioppa in \cite{MR3641484}. For any integer $n \ge 1$, they built a non-compact Ahlfors $Q$-regular space $(X_n,\dist_n,\meas_n)$, with $Q=1+(n-1)/\alpha$ and $\alpha$ suitably chosen in $(0,1)$, satisfying a weak $(1,1)$ Poincaré inequality. Moreover, each $(X_n,\dist_n,\meas_n)$ has topological dimension $n$, Hausdorff dimension $Q$, and analytic dimension $1$ (meaning that Cheeger's cotangent bundle \cite{MR1708448} has dimension $1$); in this regard, our adimensional inequalities are likely to help investigating the analytic properties of these spaces.\\

\textbf{Acknowledgments.}
I thank T.~Coulhon for the initial impetus he gave to this work, L.~Mari for precious comments on two  successive versions of this article, and the two anonymous referees for their kind and helpful suggesions.

\section{Analysis tools on metric measure spaces}

We call metric measure space any triple $(X,\dist,\meas)$ where $(X,\dist)$ is a separable metric space and $\meas$ a non-negative Borel regular measure on $(X,\dist)$ which is finite and non-zero on balls with finite and non-zero radius.  Whenever $A$ is a subset of a metric space, we write $\overline{A}$ for its closure. If $\meas(A)\neq 0$ and $f$ is integrable on $A$, we write $f_A$ or $\fint_A f \di \meas$ for the mean value $\meas(A)^{-1} \int_A f \di \meas$, or $f_{A,\meas}$ to emphasize the measure with respect to which the mean value is taken. We use the classical notation $B_r(x)$ for the open ball centered at $x$ with radius $r$, $S_r(x)$ for the sphere centered at $x$ with radius $r$, and $A(x,r,R)$ for the annulus $B_R(x)\backslash \overline{B_r(x)}$, where $0<r \le R$.  Unless otherwise stated,  whenever $\Gamma$ is a set we write $\#\Gamma$ for its cardinality.

We use classical notations for function spaces, like $C(X)$ for the space of continuous functions, $C_c(X)$ for the one of compactly supported continuous functions, $\Lip(X)$ for Lipschitz functions, $\Lip_{bs}(X)$ for Lipschitz functions with bounded support, and if $A$ is a $\meas$-measurable subset of $X$ we use $L^p(A,\meas)$ (resp.~$L^p_{loc}(A,\meas)$) for (resp.~locally) $p$-integrable functions on $A$, with $1\le p<+\infty$, $L^\infty(A,\meas)$ for $\meas$-essentially bounded functions on $A$ and $L^0(A,\meas)$ (resp.~$L^0_+(A,\meas)$) for (resp.~non-negative) $\meas$-measurable functions on $A$.

We recall that for any $p\ge 1$ the function $t \mapsto |t|^p$ is convex, so that for any $a,b \in \setR$,
 \begin{equation}\label{eq:2p}
 |a+b|^p\le2^{p-1}(|a|^p+|b|^p),
 \end{equation}
 and for any $x_1,\ldots,x_k \ge 0$,
  \begin{equation}\label{eq:p}
\sum_i x_i^p \le \left(\sum_i x_i\right)^p.
 \end{equation}

\hfill

\textbf{Upper gradients and $p$-Cheeger energies}

Let $(X,\dist,\meas)$ be a metric measure space. For any $f:X \to [-\infty,+\infty]$, a Borel function $g : X \to [0,+\infty]$ is called an upper gradient of $f$ if for any rectifiable curve $\gamma:[0,L] \rightarrow X$ parametrized by arc-length,
\[
|f(\gamma(L))-f(\gamma(0))| \le \int_0^L g(\gamma(s)) \di s.
\]
For any $p \in [1,+\infty)$, we write $\UG^p(f)$ for the set of $p$-integrable upper gradients of $f$. When $f \in L^p(X,\meas)$, its Cheeger $p$-energy \cite{MR1708448} is set as
\begin{equation}\label{eq:Cheeger}
\Ch_p(f):= \inf \left\{\liminf\limits_{i \to \infty} \|g_{i}\|_{L^{p}}^p \right\} \, \, \, \in [0,+\infty],
\end{equation}
where the infimum is taken over all the sequences $\{f_i\}_i \subset L^p(X,\meas)$ and $\{g_{i}\}_i \subset L^0_+(X,\meas)$ such that $g_{i}$ is an upper gradient of $f_{i}$ and $\| f_i - f \|_{L^p} \to 0$. Then the Sobolev space $H^{1,p}(X,\dist,\meas)$ is set as the finiteness domain of $\Ch_p$:
\begin{equation}\label{eq:Sob}
H^{1,p}(X,\dist,\meas) := \{ f \in L^p(X,\meas) \, : \, \Ch_p(f) < +\infty\}.
\end{equation}
It is a Banach space when equipped with the norm
$$
\|\cdot\|_{H^{1,p}} := (\|\cdot\|_{L^p}^p + \Ch_p(\cdot))^{1/p}.
$$
Moreover, for any function $f \in H^{1,p}(X,\dist,\meas)$, there exists a function $|\nabla f|_{*,p} \in L^p(X,\meas)$ called minimal $p$-relaxed upper
gradient of $f$ such that $|\nabla f|_{*,p} \in \UG(f)$, $$\Ch_p(f)=\int_X |\nabla f|_{*,p}^p \di \meas,$$
and $|\nabla f|_{*,p} \le g$ $\meas$-a.e.~for any $g \in \UG(f)$.

\begin{remark}
In the literature, $\Ch_p$ is sometimes defined with a coefficient $1/p$ in front, but this coefficient does not play any role in our discussion, so we skip it.
\end{remark}
%It follows from \cite[Sect.~8]{MR3090143} that when the space $(X,\dist,\meas)$ is complete (and separable),  then the minimal $p$-relaxed upper
%gradient $|\nabla f|_{*,p}$ coincides $\meas$-a.e.~with the minimal $p$-upper gradient introduced in \cite{MR1809341} and formulated with the notion of $p$-modulus of a family of curves, the minimal $p$-weak upper gradient from \cite{MR3152751} obtained through test plans, and the minimal $p$-relaxed slope from \cite{MR3152751} where the relaxation is taken with Lipschitz functions only. As a consequence, for any $f \in H^{1,p}(X,\dist,\meas)$ there exists $\{f_i\}_i \in \Lip_{bs}(X)$ such that $\|f_i-f\|_{H^{1,p}} \to0$, and $|\nabla f|_{*,p} \le g$ $\meas$-a.e.~for any $g \in UG(f)$.

\hfill

\textbf{PI spaces} 

A metric measure space $(X,\dist,\meas)$ is called a PI space if the two following properties are in force:
\begin{enumerate}
\item[(i)] (global doubling condition) there exists $C_D\ge 1$ such that for any $x \in X$ and $r>0$,
\begin{equation}\label{eq:doubling}
\meas(B_{2r}(x)) \le C_D \meas(B_r(x));
\end{equation}
\item[(ii)] (weak $(q,p)$ Poincaré inequality) there exists $p,q \ge 1$, $\lambda\ge 1$ and $C_P>0$ such that for all $f \in L^1_{loc}(X,\meas)$, all $g \in \UG^p(f)$, and all ball $B_r$ with radius $r>0$,
$$
\left( \fint_{B_r} |f - f_{B_r}|^q \di \meas \right)^{q} \le C_P r \left( \fint_{B_{\lambda r}} g^p \di \meas \right)^{1/p}\, .
$$
\end{enumerate}
We may use the notation $\PI_{q,p}$ to underline the values of the exponent involved in the weak Poincaré inequality, and write $\PI_p$ in case $q=p$. 

It is worth pointing out that the weak $(q,p)$ Poincaré inequality implies the $(q,p')$ and the $(q',p)$ ones with same constant $C_P$ for any $p' \ge p$ and $q' \le q$, as Hölder's inequality implies
\[
\left( \fint_{B_{\lambda r}} g^p \di \meas \right)^{1/p} \le \left( \fint_{B_{\lambda r}} g^{p'} \di \meas \right)^{1/p'}
\]
and
\[
\left( \fint_{B_r}|f - f_{B_r}|^{q'} \di \meas \right)^{1/q'} \le\left( \fint_{B_r}|f - f_{B_r}|^{q} \di \meas \right)^{1/q}.
\]

If only (i) is true, we say that $(X,\dist,\meas)$ is a doubling space, or we say that $\meas$ is a doubling measure; in this case, a simple argument shows that
$$
\frac{\meas(B_R(x))}{\meas(B_r(x))} \le C_D \left( \frac{R}{r} \right)^{\log_2 C_D}
$$
for any $0<r \le R<+\infty$: see \cite[Prop.~3.2]{MR3276002}, for instance.  This form of the doubling condition may clarify why \eqref{eq:reverse} is called a reverse doubling condition. The constant $Q:=\log_2 C_D$ where $C_D$ is the lowest constant such that \eqref{eq:doubling} holds is called the doubling dimension of $(X,\dist,\meas)$.  Moreover, as a consequence of the strong annular decay property \cite{Buckley}, any doubling metric measure space satisfies 
\begin{align}\label{eq:sphèresnégli}
\meas(S_r(x))=0
\end{align}
for any $x \in X$ and $r>0$.

%\begin{remark}\label{rk:dimensions?}
%1. We recall that the Hausdorff dimension of a metric space $(X,\dist)$ is set as $\dim_{\haus}(X):=\inf\{ d \ge 0 : \haus^d(X)=0\} $, where $\haus^d$ denotes the $d$-dimensional Hausdorff measure. It is well-known that if $(X,\dist)$ admits a doubling measure with constant $C_D$, then $\dim_{\haus}(X) \le Q:=\log_2(C_D)$.
 
 %2. On the other hand...

Following \cite{MR3363168}, we say that a metric space $(X,\dist)$ is geodesic, or equivalently that $\dist$ is a geodesic distance on $X$, if every pair of points in $X$ can be joined by a rectifiable curve whose length is equal to the distance between
the points. Such a curve is called a geodesic. Any complete PI space carries a geodesic distance that is bi-Lipschitz equivalent to the original metric: see \cite[Cor.~8.3.16]{MR3363168}. Since the PI property is preserved by bi-Lipschitz equivalence of metrics \cite[Lem.~8.3.18]{MR3363168}, with no loss of generality we can (and will from now on) assume that any complete PI space is geodesic.

%To close this paragraph, let us point out that if $f:X\to \setR$ is locally Lipschitz, its local Lipschitz constant $\mathrm{Lip} f$ is defined by $\mathrm{Lip} f(x) = 0$ if $x$ is isolated and
%$$
%\mathrm{Lip} f(x) = \limsup_{y \to x} \frac{|f(x)-f(y)|}{\dist(x,y)}
%$$
%otherwise. As shown by Cheeger in \cite[Th.~6.1]{MR1708448}, if $(X,\dist,\meas)$ is a $\PI_p$ space, then $\mathrm{Lip} f$ coincides $\meas$-a.e.~with the minimal $p$-relaxed upper gradient $|\nabla f|_{*,p}$ of $f$.

\hfill

\textbf{Good coverings and patching theorem}

We call doubly measured metric space any quadruple $(X,\dist,\meas,\mu)$ where $(X,\dist,\meas)$ and $(X,\dist,\mu)$ are metric measure spaces, and we write $\Ch_p$ for the $p$-Cheeger energy of $(X,\dist,\meas)$. For simplicity, we assume that $\supp(\meas)=\supp(\mu)=X$, but this assumption can be removed with the help of a natural adaptation of the definitions below. We provide the definition of a good covering \cite{MR2149405, MR2481740,Tew}.

\begin{definition}\label{def:goodcovering}
A good covering of a doubly measured metric space $(X,\dist,\meas,\mu)$ is a countable family of triples $\{(U_{i},U_{i}^{*},U_{i}^{\#})\}_{i \in I}$ of Borel sets with finite $\meas$ and $\mu$ measure such that
\begin{enumerate}
\item $U_i\subset U_i^* \subset U_i^{\#}$ for any $i$,  
\item $\meas(X \backslash \bigcup_{i} U_{i})=\mu(X \backslash \bigcup_{i} U_{i})=0$,
\end{enumerate}
and for some $Q_1,Q_2>0$,
\begin{enumerate}
\item[(3)] $\sup_i  \# \{ j \in I :  \overline{U_{j}^{\#}} \cap \overline{U_{i}^{\#}} \neq \emptyset \} \le Q_{1}$,
\item[(4)] whenever $i \sim j $ there exists $k(i,j) \in I$ such that $U_{i} \cup U_{j} \subset U_{k(i,j)}^{*}$ and $\meas(U_{k(i,j)}^{*}) \le Q_{2} \min (\meas(U_{i}),\meas(U_{j}))$, $\mu(U_{k(i,j)}^{*}) \le Q_{2} \min (\mu(U_{i}),\mu(U_{j}))$.
\end{enumerate}
\end{definition}

Here and in the sequel $i \sim j$ means $\overline{U_{j}} \cap \overline{U_{i}} \neq \emptyset.$ It is worth pointing out that if $\{(U_{i},U_{i}^{*},U_{i}^{\#})\}_{i \in I}$ is a good covering of $(X,\dist,\meas,\mu)$, then 
\begin{equation}\label{eq:Q1}
\sum_{i \in \cV} 1_{U_i^*}  \le Q_1
\end{equation}
because $U_i^*\cap U_j^* \neq \emptyset$ implies $\overline{U_{j}^{\#}} \cap \overline{U_{i}^{\#}} \neq \emptyset$, and
\begin{equation}\label{eq:Q12}
\sum_{i \in \cV} \sum_{i \sim j} 1_{U^*_{k(i,j)}}\le  Q_1^3
\end{equation}
because $$\sum_{i \sim j} 1_{U^*_{k(i,j)}} \le Q_1   1_{\cup_{j \sim i}U^*_{k(i,j)}} \le Q_1  1_{\cup_{j \sim i}U^\#_{k(i,j)}}$$ for any $i \in \cV$ and
$$
\sum_{i \in \cV}  1_{\cup_{j \sim i}U^\#_{k(i,j)}} \le Q_1^2.
$$
\begin{definition}\label{def:weightedgraph}
The weighted graph $(\cV,\cE,\mu)$ canonically attached to a good covering $\{(U_{i},U_{i}^{*},U_{i}^{\#})\}_{i \in I}$ is obtained by setting  $\cV := I$, $\cE := \{ (i,j) \in I^2 : i \sim j\}$, and $\mu : \cV \sqcup \cE  \rightarrow [0,+\infty)$ where $\mu(i) := \mu(U_{i})$ for every $i \in I$ and  $\mu(i,j) := \min (\mu(U_{i}),\mu(U_{j}))$ for every $(i,j) \in \cE$. 
\end{definition}

\begin{remark}\label{rk:measure}
If $(\cV,\cE,\mu)$ is a weighted graph as in the previous definition, then $\mu$ provides a measure on $\cV$, still denoted by $\mu$, defined by $\mu(\Omega):=\sum_{i \in \Omega} \mu(i)$ for any $\Omega \subset \cV$, and another measure on $\cE$, denoted by $\mu$ too, defined by $\mu(\Gamma):=\sum_{(i,j)\in \Gamma} \mu(i,j)$ for any $\Gamma \in \cE$. 
\end{remark}

%\begin{remark}
%It is worth pointing out that such a graph satisfies $\sup_i d(i)\le Q_1$, where $d(i):= \{ j \in \cV : i\sim j \}$, and $\sup_{i\sim j} \mu(i)/\mu(j) \le Q_2$.
%\end{remark}

For $1 \le s \le t < +\infty$, we say that a good covering $\{(U_{i},U_{i}^{*},U_{i}^{\#})\}_{i \in I}$ satisfies local $(t,s)$  continuous Sobolev-Neumann inequalities if there exists a constant $C_1>0$ such that for any $i \in I$,
\begin{equation}\label{continuous Sobolev-Neumann 1}
\left( \int_{U_{i}} |f-f_{U_{i},\mu}|^t \di \mu \right)^{1/t} \le C_1 \left( \int_{U_i^*} g^s \di \meas\right)^{1/s}
\end{equation}
for any $f \in L^1_{loc}(U_i,\mu)$ and $g \in UG(f)\cap L^s(U_i^*,\meas)$, and 
\begin{equation}\label{continuous Sobolev-Neumann 2}
\left( \int_{U_{i}^{*}} |f-f_{U_{i}^{*},\mu}|^t \di \mu \right)^{1/t} \le C_1 \left( \int_{U_i^\#} g^s \di \meas\right)^{1/s}
\end{equation}
for any $f \in L^1_{loc}(U_i^*,\mu)$ and $g \in UG(f)\cap L^s(U_i^\#,\meas)$; we say that $\{(U_{i},U_{i}^{*},U_{i}^{\#})\}_{i \in I}$ satisfies a $t$ Poincaré inequality if the associated weighted graph $(\cV,\cE,\mu)$ admits a constant $C_2>0$ such that for any $f:\cV \to \setR$ with finite support,
\begin{equation}\label{discrete Poincaré}
\left( \sum_{i \in \cV} |f(i)|^{t} \mu(i) \right)^{1/t} \le C_2 \left( \sum_{(i,j) \in \cE} |f(i) - f(j)|^{t} \mu(i,j)  \right)^{1/t}.
\end{equation}

\begin{remark}\label{rk:classicalnotation}
For any $f : \cV \to \setR$ one can define $|\nabla f| : \cE \to \setR$ by setting $|\nabla f| (i,j) := |f(i)-f(j)|$ for any $(i,j)\in \cE$. In this way, \eqref{discrete Poincaré} takes the more classical form 
$\|f\|_{L^{t}(\cV,\mu)} \le C_2 \||\nabla f|\|_{L^{t}(\cE,\mu)}$.
\end{remark}

Then the following \textit{patching} theorem holds.

\begin{theorem}\label{th:patching}
Let $(X,\dist,\meas,\mu)$ be a doubly measured metric space admitting a good covering satisfying the local $(t,s)$ continuous Sobolev-Neumann inequalities and the discrete $t$ Poincaré inequality for some numbers $1 \le s \le t < +\infty$. Then there exists a constant $C=C(Q_1,Q_2,C_1,C_2,s,t)>0$ such that for any $f \in H^{1,s}(X,\dist,\meas)$,
\begin{equation}\label{global Sobolev inequality}
\left( \int_{X} |f|^t \di \mu \right)^{1/t} \le C \Ch_s(f)^{1/s}.
\end{equation}
\end{theorem}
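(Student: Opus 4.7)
The plan is a Grigor'yan--Saloff-Coste style patching argument in the spirit of \cite{MR2149405,MR2481740,Tew}. By the very definition of $\Ch_s(f)$ it suffices to prove the estimate with $\Ch_s(f)^{1/s}$ replaced by $\|g\|_{L^s(X,\meas)}$ for $f \in \Lipbs(X)$ and $g \in \UG^s(f)$, and then pass to the limit along a recovery sequence from \eqref{eq:Cheeger}. Since $\mu(X\setminus\bigcup_i U_i) = 0$ and $\sum_i \mathbf{1}_{U_i}\le\sum_i \mathbf{1}_{U_i^*}\le Q_1$ by \eqref{eq:Q1}, the starting point is
\[
\int_X |f|^t \di\mu \le \sum_i \int_{U_i} |f|^t \di\mu \le 2^{t-1}\Bigl(\underbrace{\sum_i \int_{U_i} |f - f_{U_i,\mu}|^t \di\mu}_{(\mathrm{A})} + \underbrace{\sum_i |f_{U_i,\mu}|^t \mu(U_i)}_{(\mathrm{B})}\Bigr),
\]
via \eqref{eq:2p}. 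The two terms are then treated separately, using the local Sobolev-Neumann inequalities for $(\mathrm{A})$ and the discrete Poincaré inequality for $(\mathrm{B})$.

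For $(\mathrm{A})$, inequality \eqref{continuous Sobolev-Neumann 1} bounds the $i$-th summand by $C_1^t \bigl(\int_{U_i^*} g^s \di\meas\bigr)^{t/s}$. Since $t/s\ge 1$, \eqref{eq:p} converts $\sum_i$ of these into a single $t/s$-power of $\sum_i \int_{U_i^*}g^s \di\meas$, which by \eqref{eq:Q1} is at most $Q_1 \|g\|_{L^s}^s$. This already gives $(\mathrm{A}) \le (Q_1 C_1^s)^{t/s}\|g\|_{L^s}^t$.

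For $(\mathrm{B})$, set $F(i) := f_{U_i,\mu}$. Because $f$ has bounded support and the covering has bounded overlap, $F$ has finite support on $\cV$, so \eqref{discrete Poincaré} applies and yields $(\mathrm{B})\le C_2^t \sum_{(i,j)\in\cE}|F(i)-F(j)|^t \mu(i,j)$. The crux is to control $|F(i) - F(j)|$ for $i \sim j$: setting $k := k(i,j)$ and using $f_{U_k^*,\mu}$ as a pivot, Jensen's inequality on $U_i \subset U_k^*$ together with the measure comparison $\mu(U_k^*)\le Q_2\mu(U_i)$ from property~(4) and the second local Sobolev-Neumann inequality \eqref{continuous Sobolev-Neumann 2} gives
\[
|f_{U_i,\mu} - f_{U_k^*,\mu}|^t \le \frac{Q_2}{\mu(U_k^*)}\int_{U_k^*} |f - f_{U_k^*,\mu}|^t \di\mu \le \frac{Q_2 C_1^t}{\mu(U_k^*)} \Bigl(\int_{U_k^\#} g^s \di\meas\Bigr)^{t/s},
\]
and symmetrically with $j$ in place of $i$. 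Since $\mu(i,j)\le \mu(U_k^*)$, applying \eqref{eq:2p} and then summing over $(i,j)$ with \eqref{eq:p} and an overlap bound $\sum_{(i,j)\in\cE}\mathbf{1}_{U_{k(i,j)}^\#}\le Q_1^3$, derived exactly as \eqref{eq:Q12} but with $U^\#$ in place of $U^*$, gives $(\mathrm{B})\le C(Q_1,Q_2,C_1,C_2,s,t)\|g\|_{L^s}^t$.

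Combining the two bounds, taking $t$-th roots and infimizing over $g\in\UG^s(f)$ yields \eqref{global Sobolev inequality}. The main obstacle, and the heart of the argument, is the control of the discrete gradient $|F(i) - F(j)|$: this is precisely the place where the auxiliary enlarged set $U_k^*$ in property~(4) and the second local inequality \eqref{continuous Sobolev-Neumann 2} (which involves the even larger $U_k^\#$ on the right-hand side) come into play. Everything else is bookkeeping with bounded overlaps and the elementary inequalities \eqref{eq:2p}, \eqref{eq:p}.
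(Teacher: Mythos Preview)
Your proof is correct and follows the same patching scheme as the paper's: the split into (A) and (B), the bounded-overlap estimate for (A), and the discrete Poincar\'e followed by a pivot at $f_{U_{k(i,j)}^*,\mu}$ together with \eqref{continuous Sobolev-Neumann 2} for (B) are identical in substance---the paper reaches the pivot via a double integral over $U_i\times U_j$ rather than your direct Jensen step, and you correctly land on $U_{k(i,j)}^\#$ after applying \eqref{continuous Sobolev-Neumann 2}. One small caveat: bounded overlap alone does not force $F(i)=f_{U_i,\mu}$ to be finitely supported when $f\in\Lipbs(X)$, since Definition~\ref{def:goodcovering} does not prevent infinitely many $U_i$ from meeting a fixed bounded set; this point (which the paper's own proof also glosses over, applying \eqref{discrete Poincaré} directly to $i\mapsto f_{U_i,\mu}$) is easily handled either by the concrete structure of the $\kappa$-decompositions used later or by extending \eqref{discrete Poincaré} from finitely supported functions to all of $L^t(\cV,\mu)$ by truncation.
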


\begin{proof}
%Since $\Lip_{bs}(X)$ is dense in $H^{1,s}(X,\dist,\meas)$, it is enough to show \eqref{global Sobolev inequality} for $f \in \Lip_{bs}(X)$. 
Take $f \in H^{1,s}(X,\dist,\meas)$. Then
\begin{align}\label{eq:eq1}
\int_X |f|^t \di \mu & \le \sum_{i \in \cV} \int_{U_i} |f|^t \di \mu \nonumber \\
& \le 2^{t-1} \sum_{i \in \cV} \left( \int_{U_i} |f-f_{U_i,\mu}|^t \di \mu + \int_{U_i} |f_{U_i,\mu}|^t \di \mu  \right) \qquad \text{thanks to \eqref{eq:2p}} \nonumber \\
& \le 2^{t-1}C_1^t \sum_{i \in \cV} \left( \int_{U_i^*} |\nabla f|_{*,s}^s \di \mu \right)^{t/s} + 2^{t-1} \sum_{i \in \cV} |f_{U_i,\mu}|^t \mu(U_i) \quad \text{thanks to \eqref{continuous Sobolev-Neumann 1}.}
\end{align}
On one hand, \eqref{eq:p} and \eqref{eq:Q1} implies
\begin{align}\label{eq:eq2}
\sum_{i \in \cV} \left( \int_{U_i^*} |\nabla f|_{*,s}^s \di \mu \right)^{t/s} \le \left(  \int_X \sum_{i \in \cV} 1_{U_i^*} |\nabla f|_{*,s}^s \di \mu \right)^{t/s} \le Q_1^{t/s}\left(  \int_X |\nabla f|_{*,s}^s \di \mu \right)^{t/s} .
\end{align}
On the other hand, \eqref{discrete Poincaré} and a suitable double use of Hölder's inequality yields
\begin{align*}
\sum_{i \in \cV}  |f_{U_i,\mu}|^t \mu(U_i) & \le C_2^t \sum_{(i,j) \in \cE} ||f_{U_i,\mu}| - |f_{U_{j},\mu}||^t \mu(i,j)\\
& \le C_2^t \sum_{(i,j) \in \cE} \int_{U_i} \int_{U_j}|f(x)-f(y)|^t \di \mu(x)\di \mu(y) \frac{\mu(i,j)}{\mu(U_i)\mu(U_j)} \, \cdot
\end{align*}
As for any $(i,j)\in \cE$ there exists $k(i,j)$ such that $U_i \cup U_j \subset U_{k(i,j)}^*$ and $\mu(i,j) \le \mu(U_{k(i,j)}^*)\le Q_2\, \mu(i,j)$,
we get
\[
\sum_{i \in \cV}  |f_{U_i,\mu}|^t \mu(U_i) \le C_2^t \sum_{(i,j) \in \cE} \int_{U_{k(i,j)}^*} \int_{U_{k(i,j)}^*}|f(x)-f(y)|^t \di \mu(x)\di \mu(y) \frac{Q_2}{\mu(U_{k(i,j)}^*)}
\]
For any $(i,j) \in \cE$, applying \eqref{eq:2p} with $a=f(x) - f_{U_{k(i,j)}^*}$ and $b = f_{U_{k(i,j)}^*} - f(y)$ leads to
\[
\int_{U_{k(i,j)}^*} \int_{U_{k(i,j)}^*}|f(x)-f(y)|^t \di \mu(x)\di \mu(y) \le 
2^t \mu(U_{k(i,j)}^*) \int_{U_{k(i,j)}^*}|f-f_{U_{k(i,j)}^*}|^t \di \mu.
\]
Therefore,
\begin{align}\label{eq:eq3}
\sum_{i \in \cV}  |f_{U_i,\mu}|^t \mu(U_i) & \le (2 C_2)^t Q_2 \sum_{(i,j) \in \cE}\int_{U_{k(i,j)}^*}|f-f_{U_{k(i,j)}^*}|^t \di \mu \nonumber \\
& \le (2 C_1 C_2)^t Q_2  \sum_{(i,j) \in \cE} \left( \int_{U_{k(i,j)}^*} |\nabla f|_{*,s}^s \di \mu \right)^{t/s} \qquad \text{thanks to \eqref{continuous Sobolev-Neumann 2}} \nonumber \\
& \le (2 C_1 C_2)^t Q_2  \left( \int_X \sum_{(i,j) \in \cE} 1_{U_{k(i,j)}^*}  |\nabla f|_{*,s}^s \di \mu \right)^{t/s}  \quad \text{thanks to \eqref{eq:p}} \nonumber \\
& \le (2 C_1 C_2)^t Q_2 Q_1^{3t/s} \left( \int_X |\nabla f|_{*,s}^s \di \mu \right)^{t/s} \quad  \text{thanks to \eqref{eq:Q12}}.
%\int_{U_{k(i,j)}^*}|f-f_{U_{k(i,j)}^*}|^t \di \mu
\end{align}
Combining \eqref{eq:eq1}, \eqref{eq:eq2} and \eqref{eq:eq3} yields \eqref{global Sobolev inequality} with $$C=2^{t-1}[C_1^tQ_1^{t/s} + (2C_1 C_2)^t Q_2 Q_1^{3t/s}].$$

\end{proof}

\textbf{Good coverings via $\kappa$-decomposition}

In this paragraph, we provide a systematic manner to build a good covering on doubly measured metric spaces satisfying mild assumptions. 

\begin{remark}
We suggest the reader to keep in mind the set $X \subset \setR^2 \approx \mathbb{C}$ defined as
\[
B_1(0) \cup A_1 \cup A_2 \cup A_3
\]
where 
\begin{align*}
A_1 & =\{r e^{i\theta} : r>0,  \, \theta \in (-\pi/4,\pi/4)\},\\
A_2 & =\{re^{i\theta} : 0< r < 20,  \, \theta \in (\pi/2,3\pi/4)\},\\
A_3 & =  \{re^{i\theta} : 0< r < 17,  \, \theta \in (\pi,3\pi/2)\} \backslash \{re^{i\theta} : 3< r < 15,  \, \theta \in (5\pi/4,7\pi/4)\},
\end{align*}
as a guiding example to apply the construction described in this paragraph, in the case where the parameter $\kappa$ appearing thereafter is equal to $2$.
\end{remark}

Let $(X,\dist)$ be a connected metric space, $o \in X$ and $\kappa >1$.  For any fixed $i \in \setZ$,  consider the connected components $\{V_{i,a}\}_{a \in \Lambda(i)}$ of the annulus $A(o,\kappa^{i-1},\kappa^{i})$, and note that $\overline{V_{i,a}} \cap S_{\kappa^{i-1}}\neq \emptyset$ for any $a \in \Lambda(i)$.

 Write $\Gamma$ for the set of indices $(i,a)$ where $i \in \setZ$ and $a \in \Lambda(i)$.  Consider $\Lambda \subset \Gamma$ defined by 
\[
\Lambda:=\{(i,a) :  \overline{V_{i,a}} \cap S_{\kappa^{i}}(o) \neq \emptyset \}
\]
and for any $i\in \setZ$ denote by $\aleph(i)$ the set of $a \in \Lambda(i)$ such that $(i,a) \in \Lambda$. Observe that 
\[
\Gamma \backslash \Lambda = \bigcup_{\kappa^{i-1} < \eps \le \kappa^{i}} \{ (i,a)  : \overline{V_{i,a}} \cap S_\eps(o) = \emptyset \}
\]
because $\overline{V_{i,a}} \cap S_\eps(o) = \emptyset$ for some $\kappa^{i-1} < \eps \le \kappa^{i}$ implies $\overline{V_{i,a}} \cap S_{\kappa^{i}}(o) = \emptyset$. This means in particular that pieces $V_{i,a}$ where $(i,a) \in \Gamma \backslash \Lambda$ may be of arbitrary small width.  In order to build a good covering from the pieces $\{V_{i,a}\}_{(i,a) \in \Gamma}$, we want to avoid this arbitrary smallness, because it may contradict the measure control in (4) of Definition \ref{def:goodcovering}. For this reason, we proceed as follows.
\begin{itemize}
\item First we define sets $\{U_{i,a}\}_{(i,a) \in \Lambda}$ by setting $U_{i,a}:=V_{i,a}$ for any $(i,a) \in \Lambda$.
\item Secondly we enlarge some of these sets $\{U_{i,a}\}_{(i,a) \in \Lambda}$ as follows: for any $(i,a) \in \Gamma \backslash \Lambda$, we choose $b \in \aleph(i-1)$ such that $\overline{V_{i,a}} \cap \overline{V_{i-1,b}} \neq \emptyset$ and we set
$$U_{i-1,b}:= V_{i,a} \cup V_{i-1,b} \cup (\overline{V_{i,a}} \cap \overline{V_{i-1,b}}).$$
\end{itemize}
In this way, we obtain a family of sets $\{U_{i,a}\}_{(i,a) \in \Lambda}$ such that 
\begin{equation}\label{eq:prop1}
\overline{U_{i,a}} \cap S_{\kappa^{i-1}}(o) \neq \emptyset \qquad \text{and} \qquad \overline{U_{i,a}} \cap S_{\kappa^{i}}(o) \neq \emptyset
\end{equation}
for any $(i,a) \in \Lambda$. Note also that
\begin{equation}\label{eq:prop2}
X \backslash \bigcup_{(i,a) \in \Lambda} U_{i,a} \subset \bigcup_{i \in \setZ} S_{\kappa^{i}}(o).
\end{equation}

\begin{definition}
Let $(X,\dist)$ be a metric space, $o \in X$ and $\kappa >1$.  We call $\kappa$-decomposition of $X$ centered at $o$ the family of connected open sets $\{U_{i,a}\}_{(i,a) \in \Lambda}$ built as above.
\end{definition}

On geodesic doubling metric measure spaces, $\kappa$-decompositions satisfy the following key lemma.

\begin{lemma}\label{lem:1}
Let $(X,\dist,\meas)$ be such that $(X,\dist)$ is geodesic and $\meas$ is doubling of doubling dimension $Q$. Then for any $o \in X$ and $\kappa>1$,  the cardinality $h(i)$ of each layer $\{U_{i,a}\}_{a \in \aleph(i)}$ of the $\kappa$-decomposition of $X$ centered at $o$ is finite and bounded from above by a finite number $h=h(\kappa,Q)$.
\end{lemma}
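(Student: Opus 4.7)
The plan is to find inside each component $V_{i,a}$ with $a \in \aleph(i)$ a ball of definite size depending only on $\kappa$ and $Q$, and then to bound the number of such components by a volume-comparison argument based on the doubling condition. The enlargement procedure that upgrades some $V_{i,a}$ to $U_{i,a}$ only merges pieces indexed outside $\Lambda$, so the labels in $\aleph(i)$ remain in bijection with the original connected components $\{V_{i,a}\}_{a \in \aleph(i)}$ of the open annulus $A(o,\kappa^{i-1},\kappa^{i})$, and it suffices to bound this latter cardinality.

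First, I would fix $a \in \aleph(i)$ and produce a point $x_{i,a} \in V_{i,a}$ lying on the intermediate sphere $S_\rho(o)$, where $\rho := (\kappa^{i-1}+\kappa^{i})/2$. The image of the continuous function $y \mapsto \dist(o,y)$ on the connected set $V_{i,a}$ is a connected subset of $(\kappa^{i-1},\kappa^{i})$, and since $(i,a) \in \Lambda$ the closure $\overline{V_{i,a}}$ meets both $S_{\kappa^{i-1}}(o)$ and $S_{\kappa^{i}}(o)$; hence this image is the whole interval $(\kappa^{i-1},\kappa^{i})$ by the intermediate value theorem. Next, I would set $\delta := (\kappa^{i}-\kappa^{i-1})/4$. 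The triangle inequality gives $B_\delta(x_{i,a}) \subset A(o,\kappa^{i-1},\kappa^{i})$, and since $(X,\dist)$ is geodesic, $B_\delta(x_{i,a})$ is path-connected; as it contains $x_{i,a} \in V_{i,a}$ and $V_{i,a}$ is a connected component of the open annulus, I conclude $B_\delta(x_{i,a}) \subset V_{i,a}$.

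The balls $\{B_\delta(x_{i,a})\}_{a \in \aleph(i)}$ are then pairwise disjoint and all contained in $B_{\kappa^{i}}(o)$. Since $x_{i,a} \in B_{\kappa^{i}}(o)$, one has $B_{\kappa^{i}}(o) \subset B_{2\kappa^{i}}(x_{i,a})$, and the doubling inequality $\meas(B_R(x))/\meas(B_r(x)) \le C_D (R/r)^{Q}$ recalled right before \eqref{eq:sphèresnégli} yields
\[
\meas(B_{\kappa^{i}}(o)) \le \meas(B_{2\kappa^{i}}(x_{i,a})) \le C_D \left(\frac{2\kappa^{i}}{\delta}\right)^{Q} \meas(B_\delta(x_{i,a})) = C_D \left(\frac{8\kappa}{\kappa-1}\right)^{Q} \meas(B_\delta(x_{i,a})).
\]
Summing the reverse inequality over $a \in \aleph(i)$ and comparing with $\meas(B_{\kappa^{i}}(o))$ delivers the bound $\#\aleph(i) \le h(\kappa,Q) := C_D \bigl(8\kappa/(\kappa-1)\bigr)^{Q}$.

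I do not anticipate any serious obstacle. The only delicate point is the use of the geodesic assumption, which is needed twice: to guarantee that the small ball $B_\delta(x_{i,a})$ is connected (so that it is swallowed by a single connected component of the annulus) and, implicitly, so that the intermediate value argument applies to a genuinely non-trivial interval of radii. Both uses are standard, and the statement of the lemma records the geodesic hypothesis on $(X,\dist)$ precisely to make them legitimate.
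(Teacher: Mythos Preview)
Your proof is correct and follows essentially the same approach as the paper: locate in each $V_{i,a}$ (with $a\in\aleph(i)$) a point on the intermediate sphere $S_{(\kappa^{i-1}+\kappa^i)/2}(o)$, use the geodesic assumption to ensure the ball of radius $(\kappa^i-\kappa^{i-1})/4$ around that point is connected and hence contained in $V_{i,a}$, and conclude by a doubling volume comparison inside $B_{\kappa^i}(o)$. The only cosmetic differences are that the paper takes $x_a$ in $\overline{V_{i,a}}$ rather than $V_{i,a}$ and bounds the cardinality via the minimum-measure ball instead of summing, yielding the same constant $h=2^Q\bigl(8\kappa/(\kappa-1)\bigr)^Q$.
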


\begin{proof}
Take $\kappa >1$. Take $i \in \setZ$.  Let $\{V_{i,a}\}_{a \in \Lambda(i)}$ be the connected components of the annulus $A(o,\kappa^{i-1},\kappa^{i})$.  Set $r_i:=(\kappa^{i}+\kappa^{i-1})/2$. Then
\begin{equation*}
V_{i,a} \cap S_{r_i} \neq \emptyset
\end{equation*}
for any $(i,a) \in \Lambda$.  Indeed,  this follows from the intermediate value theorem applied to the continuous function $[0,1] \ni t \mapsto \dist(o,\gamma(t))$ where $\gamma$ is a geodesic joining a point in $\overline{V_{i,a}} \cap S_{\kappa^{i-1}}(o)$ to another point in $\overline{V_{i,a}} \cap S_{\kappa^{i}}(o)$. 

Set $\rho_i:=(\kappa^{i}-\kappa^{i-1})/4$.  Note that if $x_a \in \overline{V_{i,a}} \cap S_{r_i}(o)$, then
\[
\text{$\{B_{\rho_i}(x_{a})\}_{a \in \aleph(i)}$ is a disjoint family of balls}.
\]
Indeed, since $(X,\dist)$ is geodesic these balls are all connected,  and they all entirely lie in disjoint connected components of $A(o,\kappa^{i-1},\kappa^{i})$.

Now observe that the balls $\{B_{\rho_i}(x_{a})\}_{a \in \aleph(i)}$ are all included in $B_{\kappa^{i}}(o)$. Therefore, for any integer $N>1$, if we pick $N$ such balls $\{B_{\rho_i}(x_{a_\ell})\}_{1 \le \ell \le N}$ we have
\[
N \min_{1 \le \ell \le N} \meas(B_{\rho_i}(x_{a_\ell}))  \le \sum_{1 \le \ell \le N} \meas(B_{\rho_i}(x_{a_\ell})) \le \meas(B_{\kappa^{i}}(o)).
\]
Let $b$ be an integer between $1$ and $N$ such that $\mu(B_{\rho_i}(x_{b})) = \min_{1 \le \ell \le N} \meas(B_{\rho_i}(x_{a_\ell}))$. Since $B_{\kappa^{i}}(o) \subset B_{\kappa^{i}+\dist(o,b)}(x_b)\subset B_{2\kappa^{i}}(x_b)$, we get
\[
N \le \frac{\meas(B_{\kappa^{i}}(o))}{\meas(B_{\rho_i}(x_{b}))} \le \frac{\meas(B_{2\kappa^{i}}(x_b))}{\meas(B_{\rho_i}(x_{b}))} \le 2^Q \left( \frac{2\kappa^{i}}{\rho_i}\right)^{Q} = 2^Q \left( \frac{8 \kappa}{\kappa-1} \right)^{Q}=:h.
\]
This implies that $\aleph(i)$ has a finite cardinality not greater than $h$.
\end{proof}

We are now in a position to build good coverings on a suitable class of doubly measured metric spaces.

\begin{proposition}\label{prop:11}
Let $(X,\dist,\meas,\mu)$ be such that $(X,\dist)$ is geodesic and $\meas$ is doubling with doubling dimension $Q$. Assume that $\mu$ is absolutely continuous with respect to $\meas$ with density $F(\cdot)=\meas(B_{\dist(o,\cdot)}(o))^\alpha \dist(o,\cdot)^{-\beta}$ for some $o \in X$ and $\alpha \ge 0$, $\beta>0$. Then for any $\kappa>1$,  the space $(X,\dist,\meas,\mu)$ admits a good covering with parameters $Q_1=Q_1(Q,\kappa)$ and $Q_2=Q_2(Q,\kappa,\alpha,\beta)$.
\end{proposition}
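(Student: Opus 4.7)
The plan is to take as index set $I := \Lambda$ the set of indices of the $\kappa$-decomposition $\{U_{i,a}\}_{(i,a)\in \Lambda}$ defined just above, and to set
\[
U_{i,a}^* := U_{i,a} \cup \bigcup_{(j,b) \sim (i,a)} U_{j,b}, \qquad U_{i,a}^{\#} := U_{i,a}^* \cup \bigcup_{(j,b) \sim (i,a)} U_{j,b}^*,
\]
so that axiom (1) of Definition \ref{def:goodcovering} is automatic. Axiom (2) is free as well: by \eqref{eq:prop2}, $X \setminus \bigcup_{(i,a)} U_{i,a}$ is contained in a countable union of spheres $S_{\kappa^i}(o)$, each of which is $\meas$-negligible by \eqref{eq:sphèresnégli} and $\mu$-negligible since $\mu \ll \meas$.

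For the bounded multiplicity axiom (3), the key geometric point is that, thanks to \eqref{eq:prop1} together with the upward absorption step in the construction of the $\kappa$-decomposition, each $U_{i,a}$ lies in the annulus $A(o, \kappa^{i-1}, \kappa^{i+1})$, hence each $U_{i,a}^{\#}$ lies in $A(o, \kappa^{i-m}, \kappa^{i+m})$ for a universal $m \in \setN$. Thus $\overline{U^{\#}_{i,a}} \cap \overline{U^{\#}_{j,b}} \neq \emptyset$ forces $|i-j| \le 2m$, and combining with Lemma \ref{lem:1} produces a bound $Q_1 = Q_1(Q, \kappa)$.

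The heart of the proof is axiom (4). For adjacent pieces $(i,a) \sim (j,b)$ I would set $k((i,a),(j,b)) := (i,a)$; then $U_{i,a} \cup U_{j,b} \subset U_{i,a}^*$ by construction. The $\meas$-control rests on the sandwich
\[
B_{\rho_i}(x_{i,a}) \subset U_{i,a} \subset U_{i,a}^* \subset B_{C \kappa^i}(o),
\]
where $x_{i,a} \in U_{i,a} \cap S_{r_i}(o)$ and $\rho_i = (\kappa^i - \kappa^{i-1})/4$ are exactly the point and radius used in the proof of Lemma \ref{lem:1}; doubling then yields $\meas(U_{i,a}^*) \le C(Q, \kappa) \meas(U_{i,a})$, and since $|i-j|$ is bounded the analogous bound with $\meas(U_{j,b})$ in place of $\meas(U_{i,a})$ follows. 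To upgrade this to $\mu$-control, I would show that on $U_{i,a}^*$ the density $F(x) = \meas(B_{\dist(o,x)}(o))^\alpha \dist(o,x)^{-\beta}$ is pinched between two multiples of the reference value $F_i := \meas(B_{\kappa^i}(o))^\alpha \kappa^{-i\beta}$, with constants depending only on $(Q, \kappa, \alpha, \beta)$: indeed $\dist(o, \cdot)$ is comparable to $\kappa^i$ on $U_{i,a}^*$, and doubling makes $\meas(B_{\dist(o,\cdot)}(o))$ comparable to $\meas(B_{\kappa^i}(o))$. Comparability of $F_i$ and $F_j$ for neighbouring pieces then closes the estimate and gives $Q_2 = Q_2(Q, \kappa, \alpha, \beta)$.

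The main obstacle I anticipate is precisely this last step: carefully tracking the maximal radial extent of the possibly enlarged pieces $U_{i,a}^*$ and ensuring that $F$ stays uniformly pinched around a single reference value $F_i$. The exponent $\beta$ on the non-doubling factor $\dist(o,\cdot)^{-\beta}$ is what forces the final constant $Q_2$ to depend on $\beta$, and the exponent $\alpha$ enters analogously through the doubling comparison between $\meas(B_{\dist(o,\cdot)}(o))$ and $\meas(B_{\kappa^i}(o))$.
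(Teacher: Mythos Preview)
Your proposal is correct and follows essentially the same route as the paper: build the $(U,U^*,U^{\#})$ triples from the $\kappa$-decomposition by adjoining neighbouring pieces, get (1)--(2) for free, obtain (3) from the radial localisation $U_{i,a}^{\#}\subset A(o,\kappa^{i-m},\kappa^{i+m})$ together with Lemma~\ref{lem:1}, and for (4) set $k((i,a),(j,b))=(i,a)$, use the ball $B_{\rho_j}(x_{j,b})\subset U_{j,b}$ against $U_{i,a}^*\subset B_{C\kappa^{i}}(x_{j,b})$ to control the $\meas$-ratio, and then pinch the density $F$ on $U_{i,a}^*$ and $U_{j,b}$ between fixed multiples of $\meas(B_{\kappa^i}(o))^\alpha\kappa^{-i\beta}$ to pass to $\mu$. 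The only cosmetic difference is that the paper takes a slightly larger $U_{i,a}^{\#}$ (union over all $(j,b)$ with $\overline{U_{i,a}^*}\cap\overline{U_{j,b}^*}\neq\emptyset$ rather than over $(j,b)\sim(i,a)$), which does not affect the argument here.
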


\begin{proof}
%Let $\{U_{i,a}\}_{i \in \setN, 1 \le a \le h(i)}$ be a $\kappa$-decomposition of $X$ centered at $o$, for some $\kappa>1$. Modify it in the following way:
%\begin{itemize}
%\item if $\overline{U_{i,a}} \cap S_{\kappa^{i}}(o) \neq \emptyset$, leave $U_{i,a}$ unchanged;

%\item if $\overline{U_{i,a}} \cap S_{\kappa^{i}}(o) = \emptyset$, glue $U_{i-1,b}$ and $U_{i,a}$ together, write $U_{i-1,b}$ to denote this union, and reorder the indices $1\le a \le h(i)$ into $1\le a' \le h(i)-1$. With a slight abuse of notation, write $h(i)$ for $h(i)-1$.
%\end{itemize}
%This operation ensures that for any $(i,a)$, there exists $x_{i,a} \in S_{r_i}(o)$ such that $B(x_{i,a},\rho_i)\subset U_{i,a}$, where $r_i:=(\kappa^{i}+\kappa^{i-1})/2$ and $\rho_i:=(\kappa^{i}-\kappa^{i-1})/4$.
Take $\kappa>1$ and build the $\kappa$-decomposition $\{U_{i,a}\}_{(i,a) \in \Lambda}$ of $X$ centered at $o$.  For any $(i,a) \in \Lambda$, set $\Lambda^*(i,a):=\{(j,b) \in \Lambda : \overline{U_{i,a}} \cap \overline{U_{j,b}} \neq \emptyset\}$ and
\[
U_{i,a}^{*}:=\bigcup_{(j,b) \in \Lambda^*(i,a)} U_{j,b},
\]
then set $\Lambda^\#(i,a):=\{(j,b) \in \Lambda : \overline{U_{i,a}^*} \cap \overline{U_{j,b}^*} \neq \emptyset\}$ and
\[
U_{i,a}^{\#}:=\bigcup_{(j,b) \in \Lambda^\#(i,a)} U_{j,b}^*.
\]
We claim that the family of triples $\{(U_{i,a},U_{i,a}^{*},U_{i,a}^{\#})\}_{(i,a)\in \Lambda}$ is a good covering of the doubly measured metric space $(X,\dist,\meas,\mu)$. 

Indeed, for any $(i,a)\in \Lambda$, we obviously have $U_{i,a} \subset U_{i,a}^{*} \subset U_{i,a}^{\#}$. Moreover, since $\meas$ is doubling, we know from \eqref{eq:sphèresnégli} and \eqref{eq:prop2} that $\meas(X\backslash \bigcup_{i,a} U_{i,a})=0$, and since $\mu$ is absolutely continuous with respect to $\meas$ we also get $ \mu(X\backslash \bigcup_{i,a} U_{i,a}) =0$.  Hence (1) and (2) in Definition \ref{def:goodcovering} are satisfied, and we are left with proving (3) and (4).

To prove (3), take $(i,a) \in \Lambda$ and observe that $\overline{U_{i,a}^\#} \cap \overline{U_{j,b}^\#} \neq \emptyset$ implies $U_{i,a}^\# \cup U_{j,b}^\# \subset A(o,\kappa^{i-50},\kappa^{i+50})$, where $50$ is by no means optimal but enough for our purposes. Since $U_{i,a}^\#$ and  $U_{j,b}^\#$ are made of pieces $\{U_{k,c}\}$ belonging to $A(o,\kappa^{i-50},\kappa^{i+50})$,  and since the number of such pieces is bounded from above by $101 h$ where $h=h(Q,\kappa)$ is given by Lemma \ref{lem:1}, then the number of distinct pair of indices $(j,b)$ such that $\overline{U_{i,a}^\#} \cap \overline{U_{j,b}^\#} \neq \emptyset$ is bounded from above by  a number  which depends only on $h$. Thus we have (3) with $Q_1=Q_1(Q,\kappa)$.

To prove (4), take $(i,a) \in \Lambda$ and observe that the very definition of $U_{i,a}^*$ ensures that if $\overline{U_{i,a}} \cap \overline{U_{j,b}} \neq \emptyset$ then $U_{i,a} \cup U_{j,b} \subset U_{i,a}^*$, so we may choose $k((i,a),(j,b))=(i,a)$.  Moreover, for any $(j,b)$ such that $\overline{U_{i,a}} \cap \overline{U_{j,b}} \neq \emptyset$, if we let $x_{j,b}$ be a point in $\overline{U_{j,b}} \cap S_{r_j}(o)$ where $r_j=(\kappa^j+\kappa^{j-1})/2$ -- the existence of $x_{j,b}$ is guaranteed by \eqref{eq:prop1} -- then
\[
B_{\rho_j}(x_{j,b}) \subset U_{j,b}
\]
where $\rho_j=(\kappa^{j}-\kappa^{j-1})/4$, and
\[
U_{i,a}^* \subset B_{\kappa^{i+1}}(o) \subset B_{\kappa^{i+1}+\dist(o,x_{j,b})}(x_{j,b}) \subset B_{2\kappa^{i+1}}(x_{j,b}).
\]
These inclusions and the doubling condition imply
\begin{align*}
\frac{\meas(U_{i,a}^*)}{\meas(U_{j,b})}  \le \frac{\meas(B_{2\kappa^{i+1}}(x_{j,b}))}{\meas(B_{\rho_j}(x_{j,b}))} & \le 2^Q \left( \frac{8 \kappa^{i+1}}{(\kappa^j - \kappa^{j-1})}\right)^Q = 2^Q \left( \frac{8 \kappa^{i-j}}{(\kappa - 1)}\right)^Q.
\end{align*}
Since $j \in \{i-1,i+1\}$  then $i-j \le 1$ hence we get
$$
\frac{\meas(U_{i,a}^*)}{\meas(U_{j,b})} \le  2^Q \left( \frac{8 \kappa}{\kappa-1} \right)^{Q}.
$$
Moreover,  since $U_{i,a}^* \subset A(o,\kappa^{i-2},\kappa^{i+1})$, then $$F(x)\le \meas(B_{\kappa^{i+1}}(o))^\alpha \kappa^{-(i-2)\beta}$$ for any $x \in U_{i,a}^*$, and since $U_{j,b} \subset A(o,\kappa^{j-1},\kappa^{j+1})\subset A(o,\kappa^{i-2},\kappa^{i+2})$ then $$F(x)\ge \meas(B_{\kappa^{i-2}}(o))^\alpha \kappa^{-(i+2)\beta}$$ for any $x \in U_{j,b}$. Therefore,
\begin{align*}
\frac{\mu(U_{i,a}^*)}{\mu(U_{j,b})} & = \int_{U_{i,a}^*} F(x) \di \meas(x) \left(\int_{U_{j,b}} F(x) \di \meas(x) \right)^{-1}\\
& \le \meas(B_{\kappa^{i+1}}(o))^\alpha \kappa^{-(i-2)\beta}\meas(U_{i,a}^*) \bigg(\meas(B_{\kappa^{i-2}}(o))^{\alpha} \kappa^{-(i+2)\beta}\meas(U_{j,b})\bigg)^{-1}\\
&  = \left( \frac{ \meas(B_{\kappa^{i+1}}(o))}{\meas(B_{\kappa^{i-2}}(o))} \right)^{\alpha} \kappa^{4 \beta}  \frac{\meas(U_{i,a}^*)}{\meas(U_{j,b})} \\
& \le 2^{Q\alpha} \left( \frac{\kappa^{i+1}}{\kappa^{i-2}}\right)^{\alpha Q} \kappa^{4 \beta}   2^Q \left( \frac{8 \kappa}{\kappa - 1}\right)^Q \\
& \le 2^{Q(\alpha+1)} \kappa^{3\alpha Q + 4 \beta}  \left( \frac{8 \kappa}{\kappa-1} \right)^{Q}.
\end{align*}
Note that $2^{Q(\alpha+1)} \kappa^{3\alpha Q + 4 \beta}>1$. Thus we have $(4)$ with $Q_2 = 2^{Q(\alpha+1)} \kappa^{3\alpha Q + 4 \beta} \left( \frac{8 \kappa}{\kappa-1} \right)^Q$.
\end{proof}

\hfill

\textbf{RCA property}

The next definition is taken from \cite[Def.~5.1]{MR2149405}.

\begin{definition}
We say that a metric space $(X,\dist)$ has the Relatively Connected Annuli (RCA) property with respect to a point $o \in X$ if there exists $\kappa>1$ such that for any $R\ge \kappa^2$, any two points $x,y \in S_R(o)$ can be joined by a continuous path whose image is contained in $A(o,\kappa^{-1} R,\kappa R)$.
\end{definition}

Spaces satisfying the assumptions of Theorem \ref{eq:main} have the RCA property, as implied by the next proposition. 

\begin{proposition}\label{prop:RCA}
Let $(X,\dist)$ be a complete geodesic metric space equipped with a doubling measure $\meas$ such that for some $o \in X$,
\begin{enumerate}
\item there exists $p \in [1,+\infty)$, $\lambda\ge1$ and $C_P>0$ such that for all $f \in L^1_{\loc}(X,\meas)$ and $g \in \UG^{p}(f)$, for all $r>0$,
$$
\int_{B_r(o)} |f - f_{B_{r}(o)}|^p \di \meas \le C_P r^p \int_{B_{\lambda r}(o)} g^p \di \meas,
$$

\item there exists $C_o>0$ and $\eta>p$ such that for any $0<r \le R$,
$$
\frac{\meas(B_R(o))}{\meas(B_r(o))} \ge C_o \left( \frac{R}{r} \right)^\eta.
$$
\end{enumerate}
Then $(X,\dist)$ has the RCA property with respect to $o$. Moreover, the coefficient $\kappa$ of this property depends only on the doubling dimension $Q$, $p$, $\lambda$, $C_P$, $\eta$ and $C_o$.
\end{proposition}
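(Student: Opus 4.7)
The plan is to argue by contradiction. Suppose $(X,\dist)$ does not have the RCA property with respect to $o$: then for every $\kappa>1$ (to be chosen sufficiently large in terms of $Q,p,C_P,\lambda,\eta,C_o$) there exist $R=R(\kappa)\ge \kappa^2$ and points $x,y\in S_R(o)$ that cannot be joined by a continuous path contained in $A(o,\kappa^{-1}R,\kappa R)$. Since $(X,\dist)$ is complete and geodesic, it is locally path-connected, so the open annulus decomposes into open path-components, and $x,y$ lie in distinct ones $P_x\neq P_y$. Both components accumulate on $\overline{B_{\kappa^{-1}R}(o)}$, since the geodesic from $o$ to $x$ (resp.\ $y$) meets the annulus in a path from $S_{\kappa^{-1}R}(o)$ to $x$ (resp.\ $y$) that lies entirely in $P_x$ (resp.\ $P_y$) by connectedness.

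The next step is to construct a Lipschitz test function $f:X\to[0,1]$ that sees this splitting. Roughly, set $f\equiv 1$ on $P_x$, $f\equiv 0$ on $P_y$, extend by $f\equiv 1$ on $B_{\kappa^{-1}R}(o)$ via a radial linear interpolation in the thin shell $\{\kappa^{-1}R\le\dist(o,\cdot)\le 2\kappa^{-1}R\}$, and cut off to $0$ in a similar shell near $S_{\kappa R}(o)$. Formally, inside the annulus $f$ is obtained from the truncated distance $\min(1,\delta^{-1}\Dist(\,\cdot\,,X\setminus P_x))$ for a small $\delta$; the resulting function belongs to $H^{1,p}(X,\dist,\meas)$ with an upper gradient $g\lesssim \kappa/R$ supported in the union of the two radial cut-off shells plus a neighbourhood of $\partial P_x$ inside the annulus, the whole lying in $B_{\lambda\kappa R}(o)$.

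Apply hypothesis (1) on the ball $B_r(o)$ with $r=\kappa R$. The right-hand side is controlled by
\[
C_P\, r^p \int_{B_{\lambda r}(o)} g^p \di\meas \;\lesssim\; \kappa^{2p}\,\meas(B_{\lambda\kappa R}(o))
\]
after absorbing the factor $(\kappa/R)^p$ and using doubling; in fact one further exploits that the cut-off shells have measure at most $C\,\meas(B_{\kappa^{-1}R}(o))+C\,\meas(A(o,\kappa R/2,\kappa R))$, which by the reverse doubling hypothesis (2) is at most $C\kappa^{-\eta}\meas(B_r(o))$ up to a doubling factor. Meanwhile, a standard variance argument gives
\[
\int_{B_r(o)} |f-f_{B_r(o)}|^p \di\meas \;\ge\; c\min\!\bigl(\meas(P_x\cap B_r),\meas(B_r\setminus P_x)\bigr),
\]
and the same reverse-doubling estimate guarantees that each of $P_x\cap B_r$ and $B_r\setminus P_x$ (being forced to sweep the bulk annular region) carries at least a definite fraction of $\meas(B_r(o))$. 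Combining yields $c\,\meas(B_r(o))\le C\,\kappa^{p-\eta}\,\meas(B_r(o))$ up to doubling constants, and since $\eta>p$, choosing $\kappa$ large enough gives a contradiction.

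The main obstacle is the quantitative control of the measure of each transition region, which is where hypothesis (2) with the strict inequality $\eta>p$ is crucial: without it, the ``thin shell'' near $S_{\kappa^{-1}R}(o)$ might carry a nontrivial fraction of $\meas(B_{\kappa R}(o))$, and the right-hand side of Poincaré would not decay fast enough in $\kappa$ to beat the lower bound on the variance. The topological step of extracting $P_x,P_y$ and ensuring each has volume comparable to $\meas(B_r(o))$ also needs local path-connectedness, which we obtain from the geodesic assumption.
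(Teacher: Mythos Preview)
Your overall strategy---build a two-valued Lipschitz test function that detects the disconnection of the annulus, apply the Poincar\'e inequality on a large ball centred at $o$, and use the reverse doubling condition with $\eta>p$ to force a contradiction for large $\kappa$---is exactly the one the paper uses. The paper phrases it as a quantitative bound on the dyadic ``depth'' $l-i_l$ of disconnection rather than a direct contradiction, but the mechanism is the same.

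However, there is a concrete error in your estimate of the right-hand side. You assert that the outer cut-off shell $A(o,\kappa R/2,\kappa R)$ has measure at most $C\kappa^{-\eta}\meas(B_{\kappa R}(o))$ ``by the reverse doubling hypothesis.'' This is false: reverse doubling gives $\meas(B_{\kappa R/2}(o))\le C_o^{-1}2^{-\eta}\meas(B_{\kappa R}(o))$, so the outer shell actually carries at least a \emph{fixed positive fraction} of $\meas(B_{\kappa R}(o))$, independent of $\kappa$. With an outer cut-off of width $\sim\kappa R$ the corresponding contribution to $r^p\int g^p$ is of order $\meas(B_{\kappa R}(o))$, which never decays, and the inequality $c\,\meas(B_r)\le C\kappa^{p-\eta}\meas(B_r)$ does not follow. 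The paper avoids this by \emph{not} cutting off near the outer sphere at all: the test function is taken locally constant on each component outside the inner transition layer $A(o,2^{i_l}R,2^{i_l+1}R)$, so its upper gradient is supported solely in that inner layer, whose measure is genuinely $\lesssim \kappa^{-\eta}\meas(B_r)$. (If $\lambda>1$ worries you, apply Poincar\'e on $B_{\kappa R/\lambda}(o)$ rather than $B_{\kappa R}(o)$.)

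A second point you pass over too quickly is the lower bound on the variance. You assert that ``each of $P_x\cap B_r$ and $B_r\setminus P_x$ \dots\ carries at least a definite fraction of $\meas(B_r(o))$,'' but nothing in the failure of RCA guarantees that the particular component $P_x$ containing $x$ is large; it could be a thin tube. One needs to choose the two pieces more carefully (the paper takes one component $W_1$ and the union $W_2$ of all the others, and argues via doubling that each $Z_i=W_i\setminus\text{(inner layer)}$ contains a ball of radius comparable to the outer radius), or else weight the two values of $f$ as the paper does so that the mean over the two bulk pieces vanishes. Without this, the left-hand side of Poincar\'e need not be bounded below by $c\,\meas(B_r)$.
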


Note that compared to the assumptions of Theorem \ref{eq:main},  in the previous Proposition we only need the weak $(p,p)$ Poincaré inequality to hold on all balls centered at $o$.

\begin{proof}
Let us fix $R\ge 1$. For the sake of clarity,  we call relative thickness of an annulus $A(o,r_1,r_2)$ the quantity $r_2 /r_1$, and for any integers $i<j$ we denote by $A(i,j)$ the annulus $A(o,2^iR, 2^jR)$. For any $l \in\setN\backslash \{0\}$, we let $i_l \in \{1,\ldots,l\}$ be:
\begin{itemize}
\item equal to $l$ if for any $i \in \{1,\ldots,l-1\}$ the annulus $A(i-1,1)$ is contained in two different connected components of the larger annulus $A(i-1,l)$,
\item equal to the greatest integer $i$ between $1$ and $l-1$ such that the annulus $A(i-1,i)$ is included in one single connected component of the larger annulus $A(i-1,l)$ in case there exists $i \in \{1,\ldots,l-1\}$ such that this property holds.
\end{itemize}
We aim at establishing
\begin{equation}\label{eq:cla}
\sup_{ l \in \setN\backslash \{0\}}( l - i_l) \le D
\end{equation}
for some $D=D(C_D,p, C_P, \eta, C_o)<+\infty$.
 Indeed,  this bound would imply that for any $l$ the maximal relative thickness of a non-connected annulus of the form $A(i,l)$ with $i \in \{1,\ldots,l\}$ is bounded from above by $2^D$.  Since $D$ is independant of $R$,  if we set $\kappa = 2^D$ we get that any annulus $A(o,\kappa^{-1}R,\kappa R)$ is connected and then path-connected since $X$ is geodesic.
 
 %and $i_l$ is chosen so that $A(i_l,l)$ is the thickest non-connected annulus of the form $A(i,l)$ with $1 \le i \le l-1$. Thus choosing $\kappa = 2^D$ ensures that any two points in  is included in the connected one $A(o,\kappa^{-1}R,\kappa R)$.

To prove \eqref{eq:cla}, fix $l \in\setN\backslash \{0\}$ such that $l-i_l>2$.  Up to a $\meas$-negligible set, divide the ball $B_l:=B_{2^l R}(o)$ into five domains $V$, $Y_1$, $Y_2$, $Z_1$ and $Z_2$ in the following way:
\begin{itemize}
\item set $V:=B_{2^{i_l} R}(o)$,
\item by maximality of $i_l$,  the set $B_l \backslash \overline{V}=A(i_l,l)$ is not connected; choose one of its connected component and call it $W_1$; call $W_2$ the union of the other connected components;
\item finally, for any $\alpha \in \{1,2\}$, set $Y_\alpha:=W_\alpha \cap A(i,i+1)$ and $Z_\alpha:=W_\alpha \backslash A(i,i+1)$.
\end{itemize}
Apply the weak $(p,p)$ Poincaré inequality to the function $f$ defined on $B_l$ by 
$$f(x):=
\begin{cases}
0 & \text{if $x \in V$}\\
(2^{i_l}r)^{-1} \dist(o,x) - 1 & \text{if $x \in Y_1$},\\
-\meas(Z_1)\meas(Z_2)^{-1} ((2^{i_l}r)^{-1}\dist(o,x) - 1) & \text{if $x \in Y_2$},\\
1 & \text{if $x \in Z_1$},\\
-\meas(Z_1)\meas(Z_2)^{-1} & \text{if $x \in Z_2$},
\end{cases}
$$
and extended in a Lipschitz manner outside of $B_l$ (using for instance Whitney's or McShane's extension).  Note that $f$ is constructed in such a way that the mean-value $f_{Z_1 \cup Z_2}$ equals $0$.  Since $f$ is Lipschitz its local Lipschitz constant $\mathrm{lip} f $ is an upper gradient: see \cite[Prop.~1.11]{MR1708448}. Since
\[
\mathrm{lip} f (x)=
\begin{cases}
(2^{i_l}r)^{-1} & \text{if $x \in Y_1$},\\
-\meas(Z_1)\meas(Z_2)^{-1}(2^{i_l}r)^{-1} & \text{if $x \in Y_2$},\\
0 & \text{on $X \backslash (Y_1 \cup Y_2)$},
\end{cases}
\]
we get
\begin{equation}\label{eq:11}
\int_{B_l} |f - f_{B_l}|^p \di \meas \le C_P 2^{p(l-i_l)}\left( \meas(Y_1) -\frac{\meas(Z_1)^p}{\meas(Z_2)^{p}}  \meas(Y_2)\right).
\end{equation}
Convexity of the function $t \mapsto |t|^p$ applied to the quantity $|f(x) - f(y)| = |f(x) - f_{B_l} + f_{B_l} - f(y)|$ provides
\begin{align}\label{eq:12}
\int_{B_l} |f - f_{B_l}|^p \di \meas & \ge 2^{-p} \fint_{B_l} \int_{B_l} |f(x) - f(y)|^p \di \meas \nonumber\\
& \ge 2^{-p} \frac{\meas(Z_1) \meas(Z_2)}{\meas(B_l)} \left( 1 + \frac{\meas(Z_1)}{\meas(Z_2)}\right)^p.
\end{align}
Combining \eqref{eq:11} and \eqref{eq:12}, noticing that the right hand side in \eqref{eq:11} is bounded above by $C_P 2^{p(l-i_l)} \meas(Y_1\cup Y_2) (1-\meas(Z_1)^p \meas(Z_2)^{-p})$ and that $Y_1\cup Y_2 \subset B_{2^{i_l+1}}(o)$, the elementary inequality $(1-a^p)(1+a)^{-p}\le 1$ holding for any $a>0$ applied to the case $a=\meas(Z_1)/\meas(Z_2)$ eventually yields to
\begin{equation}\label{eq:13}
1 \le 2^p C_P 2^{p(l-i_l)} \frac{\meas(B_l)\meas(B_{2^{i_l+1}}(o))}{\meas(Z_1)\meas(Z_2)} \, \,.
\end{equation}
A simple reasoning based on the doubling condition shows that for any $i \in \{1,2\}$, we have $\meas(Z_i)\ge C_D^{-1} 11^{-\log_2(C_D)}\meas(B_l)$, so that
\begin{equation}\label{eq:13}
1 \le 2^p C_P 2^{p(l-i_l)} C_D^2 121^{\log_2(C_D)} \frac{\meas(B_{2^{i_l+1}}(o))}{\meas(B_l)} \, \,.
\end{equation}
Then the reverse doubling condition implies
$$
1 \le  D_o 2^{(l-i_l)(p-\eta)}
$$
where $D_o:= 4^\eta C_o C_P C_D^2 121^{\log_2 (C_D)}=4^\eta C_oC_p 484^Q$ depends only on $Q$, $C_p$, $C_o$ and $\eta$. As $\eta > p$,  \eqref{eq:cla} follows by setting $D:=(\eta - p)^{-1} \log_2(D_o)$. 
\end{proof}

\begin{remark}
Our proof yields $\kappa=[4^\eta C_o C_P 484^Q]^{(\eta-p)^{-1}}$.
\end{remark}

\textbf{A final lemma}

We conclude with an elementary lemma.

\begin{lemma}\label{lem:elem}
Let $(X,\mu)$ be a measured space, $A \subset Y$ a measurable set such that $\mu(A)>0$, and $f \in L^p(A,\mu)$ for some $p>0$. Then:
\begin{equation}\label{eq:easy}
\int_A |f-f_{A,\mu}|^p \di \mu \le 2^p \inf_{c \in \setR} \int_A |f-c|^p \di \mu.
\end{equation}
\end{lemma}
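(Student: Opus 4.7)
The plan is to prove the pointwise and integral bounds by the standard triangle-inequality argument, with the constant $2^p$ arising as the product of the convexity constant $2^{p-1}$ from \eqref{eq:2p} and a factor of $2$ obtained via Jensen. I will treat the case $p\ge 1$, which is the only case used in the paper (the convexity inequality \eqref{eq:2p} and the direction of Jensen both require $p\ge 1$).

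First I fix an arbitrary $c\in\setR$ and write
\[
f(x)-f_{A,\mu} = \bigl(f(x)-c\bigr) - \bigl(f_{A,\mu}-c\bigr).
\]
Applying \eqref{eq:2p} pointwise and integrating over $A$ yields
\[
\int_A |f-f_{A,\mu}|^p\di\mu \;\le\; 2^{p-1}\int_A |f-c|^p\di\mu \;+\; 2^{p-1}\,|f_{A,\mu}-c|^p\,\mu(A).
\]

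Next I control the second term. Since $f_{A,\mu}-c=\fint_A(f-c)\di\mu$, the inequality $|\fint g|\le\fint|g|$ together with Jensen's inequality applied to the convex map $t\mapsto t^p$ (using $p\ge 1$) gives
\[
|f_{A,\mu}-c|^p \;\le\; \Bigl(\fint_A|f-c|\di\mu\Bigr)^{\!p} \;\le\; \fint_A|f-c|^p\di\mu,
\]
so that $|f_{A,\mu}-c|^p\,\mu(A)\le\int_A|f-c|^p\di\mu$. Substituting into the previous display,
\[
\int_A |f-f_{A,\mu}|^p\di\mu \;\le\; 2^{p-1}\cdot 2 \int_A|f-c|^p\di\mu \;=\; 2^p\int_A|f-c|^p\di\mu.
\]
Taking the infimum over $c\in\setR$ gives \eqref{eq:easy}.

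There is no real obstacle here: the argument is a routine two-step estimate. The only subtle point is to remember that Jensen goes in the favourable direction precisely because $p\ge 1$; this is also the range relevant to the applications in the paper (recall that $s,t\ge 1$ throughout).
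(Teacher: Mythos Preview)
Your proof is correct and follows essentially the same route as the paper: split $f-f_{A,\mu}=(f-c)-(f_{A,\mu}-c)$, control the constant term by Jensen/H\"older, and collect the factor $2^p$. The only cosmetic difference is that the paper applies the triangle inequality at the level of $L^p$-norms (Minkowski) and then raises to the $p$-th power, whereas you use the pointwise convexity bound \eqref{eq:2p} directly on the integrand; both give the same constant and both implicitly require $p\ge 1$, as you correctly note.
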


\begin{proof}
Take $c\in \setR$. Then $\|f-f_{A,\mu}\|_{L^p(A,\mu)} \le \|f-c\|_{L^p(A,\mu)} + \|c-f_{A,\mu}\|_{L^p(A,\mu)}$ and $$\|c-f_{A,\mu}\|_{L^p(A,\mu)} \le \mu(A)^{1/p}  |c- f_{A,\mu}|\le \mu(A)^{1/p-1} \int_A |c-f| \di \mu \le  \|f-c\|_{L^p(A,\mu)}$$ by Hölder's inequality. We get $\displaystyle \|f-f_{A,\mu}\|_{L^p(A,\mu)} \le 2 \inf_{c \in \setR}\|f-c\|_{L^p(A,\mu)}$ which gives \eqref{eq:easy}.
\end{proof}

% We write $L^0_+(X,\meas)$ for the set of nonnegative $\meas$-measurable functions on $X$.\\

\section{Local Sobolev inequalities}

For our purposes, we need suitable local Sobolev inequalities.  To prove them, we adapt arguments from \cite{HajlaszKoskela} which are based on an extension of the classical Euclidean Riesz potentials \cite{Sobolev, Stein} to the setting of metric measure spaces. 

As explained e.g.~in \cite[V.1.~\&~V.2.]{Stein}, one way to prove the local Euclidean Sobolev inequality, i.e.~the existence of a constant $C>0$ depending only on $n$ and $p \in (1,n)$ such that for any ball $B \subset \setR^n$ and any $f \in C^\infty(B)$,
\begin{equation}\label{eq:1}
\|f-f_B\|_{L^{p^\star}(B)} \le C \| |\nabla f| \|_{L^{p}(B)},
\end{equation}
 is to rely on the Riesz potential of this ball, $I_1^B$, defined by
\[
I_1^B g(x):=\int_B \frac{g(z)}{|x-z|^{n-1}} \di z
\]
for any $g \in L^1(B)$ and $x \in B$. The first step consists in establishing a so-called representation formula: for any $f \in C^\infty(B)$ and almost any $x \in B$,
\begin{equation}\label{eq:2}
|f(x)-f_B| \le C_1 I_1^B|\nabla f|(x),
\end{equation}
for some $C_1$ depending only on $n$. The second step is to apply the Hardy-Littlewood-Sobolev fractional integration theorem which states that there exists $C_2>0$ depending only on $n$ and $p$ such that
\begin{equation}\label{eq:3}
\|I_1^Bg\|_{L^{p^\star}(B)} \le C_2  \|g \|_{L^p(B)}
\end{equation}
for any $g \in L^p(B)$. Finally, raising \eqref{eq:2} to the power $p^\star$, integrating over $B$, taking the $1/p^\star$-th power of the resulting inequality and applying $\eqref{eq:3}$ gives \eqref{eq:1}. 

In \cite{HajlaszKoskela}, this strategy was successfully carried out to the context of doubling metric measure spaces thanks to a natural extension of the Riesz potential $I_1^B$.  We adapt arguments from there to prove the following.

\begin{proposition}\label{prop:localSobolev}
Let $(X,\dist,\meas)$ be a complete $\PI_p$ space. Then for any $s \in [p,Q)$, there exists a constant $C_s=C_s(Q,C_P,\lambda,s)>0$ such that for any $t \in [s,p^{\star})$,
\begin{equation}\label{eq:locSobolev}
\left( \int_B |f-f_B|^t \di \meas \right)^{1/t} \le C_s \frac{R}{\meas(B)^{1/s-1/t}} \left( \int_{B} g^s \di \meas \right)^{1/s}
\end{equation}
for any ball $B \subset X$ with radius $R>0$, any $f \in L^1_{loc}(X,\meas)$ and any $g \in UG(f)\cap L^s(B,\meas)$.
\end{proposition}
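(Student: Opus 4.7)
I would follow the Hajlasz-Koskela Riesz-potential strategy sketched just before the statement: first prove a pointwise representation $|f-f_B|\le Tg$ for a metric Riesz-type operator $T$, then bound $T\colon L^s(B)\to L^t(B)$ by a Hedberg-type fractional-integration argument, and combine.

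\textbf{Step 1 (representation via chaining).} Upgrade the weak $(p,p)$ Poincar\'e inequality to a weak $(p,s)$ one with the same constants via H\"older's inequality (as noted in Section~2, using $s\ge p$). Fix $B=B_R(x_0)$ and, for each $\meas$-Lebesgue point $x\in B$ (the complement being $\meas$-negligible by doubling), introduce the chain $B_k(x):=B_{2^{-k+1}R}(x)$, $k\ge 0$, so that $B\subset B_0(x)\subset 4B$. The telescoping
\[
f(x)-f_B \;=\; (f_{B_0(x)}-f_B) + \sum_{k\ge 0}\bigl(f_{B_k(x)}-f_{B_{k+1}(x)}\bigr),
\]
combined with the weak $(p,s)$ Poincar\'e inequality applied to each pair $B_{k+1}(x)\subset B_k(x)$ and doubling to compare nested averages, yields the pointwise bound
\[
|f(x)-f_B|\;\le\; C_0 \sum_{k=0}^{\infty} 2^{-k}R \left(\fint_{\lambda B_k(x)} g^s \,\di\meas\right)^{1/s}\;=:\;Tg(x),
\]
with $C_0=C_0(Q,C_P,\lambda)$; geodesicity of $X$ (assumed in Section~2) is what permits controlling the boundary term $f_{B_0(x)}-f_B$ in the same form, and subsequently reabsorbing the $\lambda$-dilation into the same ball $B$ up to doubling.

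\textbf{Step 2 (Hedberg bound for $T$).} For $x\in B$ and $\delta\in(0,2R)$, split the series at $k_\delta=\lceil\log_2(2R/\delta)\rceil$. The coarse-scale part is dominated by $C\,\delta\,[M_\lambda(g^s)(x)]^{1/s}$, where $M_\lambda$ is the Hardy-Littlewood maximal operator over $\lambda$-enlarged balls in $B$, while the fine-scale part is controlled via the doubling lower bound $\meas(\lambda B_k(x))\gtrsim 2^{-kQ}\meas(B)$ by a geometric series of common ratio $2^{1-Q/s}<1$ (the key use of $s<Q$) bounded by
\[
C \|g\|_{L^s(B)} \meas(B)^{-1/s} R\, (\delta/R)^{1-Q/s}.
\]
Optimising in $\delta$ (Hedberg's trick) produces the pointwise estimate
\[
Tg(x) \;\le\; C R\, \meas(B)^{-1/Q} \|g\|_{L^s(B)}^{s/Q}\, [M_\lambda(g^s)(x)]^{(Q-s)/(sQ)}.
\]
Raising to the $t$-th power, integrating over $B$ and invoking the strong-type boundedness of $M_\lambda$ on the appropriate $L^r(B)$ (with $r>1$; a weak-type Marcinkiewicz substitute covers $s=p=1$) gives
\[
\|Tg\|_{L^t(B)} \;\le\; C_s \frac{R}{\meas(B)^{1/s-1/t}}\, \|g\|_{L^s(B)},
\]
which combined with Step~1 yields \eqref{eq:locSobolev}.

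\textbf{Main obstacle.} The delicate point is bookkeeping the doubling volume exponents through the Hedberg optimisation so as to produce exactly the geometric factor $R\,\meas(B)^{1/t-1/s}$, rather than a form involving the local volume $\meas(B_\delta(x))$ at the truncation scale. The strict upper bound $t<p^\star$ emerges from the joint requirements that the fine-scale geometric series converge ($s<Q$) and that the Hedberg exponents sit in the subcritical Hardy-Littlewood-Sobolev range; the case $s=p=1$ is the most demanding because $M_\lambda$ fails to be strong-type $(1,1)$, so one must replace the strong-type step by a Marcinkiewicz weak-type argument applied after truncation of $g$.
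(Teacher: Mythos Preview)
Your overall strategy---pointwise representation $|f-f_B|\le C\,Jg$ via a telescoping chain, then a Hedberg splitting/optimisation bound on $J$ and the maximal theorem---is exactly the paper's. The gap is in the implementation of Step~1. The chain $B_k(x)=B_{2^{-k+1}R}(x)$ is centred at $x$, so for small $k$ neither $B_k(x)$ nor $\lambda B_k(x)$ is contained in $B$; when $x$ is close to the boundary of $B$ this failure persists all the way down to a fixed scale. Applying Poincar\'e on such balls produces averages $\bigl(\fint_{\lambda B_k(x)}g^s\,\di\meas\bigr)^{1/s}$ over regions protruding from $B$, and since the hypothesis is only $g\in \UG(f)\cap L^s(B,\meas)$ these are not controlled by $\|g\|_{L^s(B)}$. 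Your suggestion of ``reabsorbing the $\lambda$-dilation into $B$ up to doubling'' does not repair this: doubling compares \emph{measures} of nested balls, not integrals of an arbitrary nonnegative function such as $g^s$ over them. What your chain actually yields is the weaker inequality with $\int_{\sigma B}g^s\,\di\meas$ on the right-hand side for some $\sigma=\sigma(\lambda)>1$, not the stated one. The paper's remedy is precisely the Haj\l{}asz--Koskela chain: for each $x\in B$ one places the centres $x_i$ along a geodesic between the centre $a$ of $B$ and $x$, with radii $r_i=\dist(x_i,x)/(2\lambda)$ (and a matching outward extension when $\dist(a,x)<R/2$), so that $\lambda B_i\subset B$ for every $i$ by construction. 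That containment is the whole point of the more elaborate chain, and it is the missing ingredient in your Step~1; geodesicity is used to build this chain, not to handle the boundary term as you suggest.

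Two smaller points on Step~2. First, your scale labels are swapped: it is the small-ball tail ($k>k_\delta$) that the maximal function controls, giving the $C\delta\,[M(g^s)(x)]^{1/s}$ bound, while the large-ball head ($k\le k_\delta$) is handled via the volume lower bound $\meas(\lambda B_k(x))\ge c\,2^{-kQ}\meas(B)$ and the geometric series in $2^{Q/s-1}$. Second, after raising to the $t$-th power the exponent on $M(g^s)$ is $\alpha=t(1/s-1/Q)$, and for every $s\in[p,Q)$, $t\in[s,p^\star)$ one has $\alpha<1$; so the strong-type maximal inequality is \emph{never} available in this range (not just at $s=p=1$), and the correct route is the weak-$(1,1)$ bound for $M$ together with Kolmogorov's inequality, which gives $\fint_B (M|h|^s)^\alpha\,\di\meas\le C(Q,\alpha)\bigl(\fint_B|h|^s\,\di\meas\bigr)^\alpha$ for $0<\alpha<1$.
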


\begin{proof}
Let $s \in [p,Q)$, $t \in [s,p^{\star})$, $f \in L^1_{loc}(X,\meas)$ and $B \subset X$ with radius $R>0$ be fixed.  We begin with an overview of the proof. Our first step is to introduce an operator $$J:L^s(B,\meas) \to L^0(B,\meas)$$ which we call the $s$-Haj\l{}asz-Koskela-Riesz potential of $B$. Our second step is to establish the following representation formula: there exists $C_1=C_1(Q,C_P,\lambda)>0$ such that for any $g \in \UG(f) \cap L^s(B,\meas)$,
\begin{equation}\label{eq:step2}
|f(x)-f_B| \le C_{1}J g(x) \qquad \text{for $\meas$-a.e.~$x \in B$.}
\end{equation}
Our last step is to show the existence of $C_{2}=C_2(Q,\lambda,s)>0$ such that
\begin{equation}\label{eq:step3}
\left( \int_B |J h|^t \di \meas \right)^{1/t} \le C_{2} \frac{R}{\meas(B)^{1/s-1/t}}\left( \int_B |h|^s \di \meas \right)^{1/s}
\end{equation}
for any $h \in L^s(B,\meas)$. Raising \eqref{eq:step2} to the power $t$, integrating over $B$, taking the $(1/t)$-th power of the resulting inequality and applying $\eqref{eq:step3}$ leads to \eqref{eq:locSobolev} with $C_s=C_1C_2$. Let us now provide the details.

\hfill

\textbf{Step 1.} [Construction of the $s$-Haj\l{}asz-Koskela-Riesz potential]

Let $a$ be the center of the ball $B$. For any $x \in B\backslash \{a\}$, define a chain of balls $\{B_{i} \}_{i \in I}$ as follows. Set
\begin{equation}\label{eq:clambda}
\displaystyle c_\lambda := \frac{2\lambda - 1}{2 \lambda} \, \cdot
\end{equation}
\begin{enumerate}
\item[(A)] If $R/2 \le \dist(a,x) < R$, set $I:=\setN$ and $B_i=B_{r_i}(x_i)$ for any $i \in \setN$,  where $\{x_i,r_i\}$ are defined recursively by setting $$x_{0}:=a, \quad r_{0}:=\frac{\dist(x,a)}{2\lambda}$$ and then choosing $x_{i+1}\in \mathrm{argmin}\{\dist(y,x) : y \in S_{r_i}(x_{i})\}$ for any $i\ge 0$ and setting $$r_{i+1}:=\frac{\dist(x_{i+1},x)}{2\lambda}\, \cdot$$

\item[(B)] If $0< \dist(a,x) < R/2$, define the balls $B_{i}$ for any $i \ge 0$ as above and for any $i < 0$, take $x_{i-1}\in  \mathrm{argmax}\{\dist(y,x) : y \in S_{r_{i}}(x_{i})\}$, $$r_{i-1} := c_\lambda^{-1} r_i,$$ and $B_i:=B_{r_i}(x_i)$. Finally, set $i_o$ as the unique negative integer $i$ such that $\lambda B_{i} \subset B$ and $\lambda B_{i-1} \cap (X \backslash B) \neq \emptyset$, and $I:=\{i \in \setZ : i \ge i_o\}$.
\end{enumerate}
If $R/2 \le \dist(a,x) < R$ we set $i_o=0$.

Our construction is made in such a way that the following assertions are true.

\begin{itemize}
\item[(1)] The balls $\{\lambda B_i\}_{i \in I}$ are all contained in $B$.

\item[(2)] For any ball $B_i$ there exists a ball $W_i$ such that $W_i \subset B_i \cap B_{i+1}$ and $B_i \cup B_{i+1} \subset (2\lambda)^3 W_i$.

\item[(3)] For any $i \ge 0$, the point $x_i$ lies on a geodesic from $a$ to $x$ and
\begin{align*}
\dist(a,x) & = \dist(x,x_i)+ \dist(x_i,x_{i-1}) + \ldots + \dist(x_1,x_0)\\
& = (2 \lambda) r_i + r_{i-1} + \ldots + r_0.
\end{align*}
Then $(2 \lambda) r_{i+1} + r_{i} + \ldots + r_0 = (2 \lambda) r_i + r_{i-1} + \ldots + r_0$, hence $r_{i+1}=c_\lambda r_i.$

\item[(4)] For any $i_o \le i < 0$, the point $x_i$ lies on a geodesic from $a$ to $x_{i_o}$, and $r_{i-1}= c_\lambda^{-1} r_i$.  With the previous point (3), this implies
\begin{equation}\label{eq:ri}
r_i:= (c_\lambda)^{i} \frac{\dist(a,x)}{2\lambda} \qquad \text{for any $i \in I$}. 
\end{equation}
\item[(5)] Take $x \in B$ such that $0<\dist(a,x)<R/2$.  Then $\sum_{i=i_o}^{0} r_i + \lambda r_{i_o-1} > R$ holds and implies
\[
\sum_{i=i_o}^{0} c_\lambda^{i} + \lambda c_\lambda^{i_o-1} > R \frac{2\lambda}{\dist(a,x)}
\]
thanks to \eqref{eq:ri}. Now
\[
\sum_{i=i_o}^{0} c_\lambda^{i} + \lambda c_\lambda^{i_o-1}  = \frac{c_\lambda^{i_o}-c_\lambda}{1-c_\lambda} + \lambda c_\lambda^{i_o-1} < c_\lambda^{i_o} \left( \frac{1}{1-c_\lambda} + \frac{\lambda}{c_\lambda}\right).
\]
Therefore, if we set
\[
\omega_{\lambda}:=\frac{2\lambda}{\frac{1}{1-c_\lambda}+\frac{\lambda}{c_\lambda}} 
\]
we get
\begin{equation}\label{eq:pratique}
 \left( c_\lambda \right)^{i_o}  > \omega_{\lambda}\frac{R}{\dist(a,x)} \, \cdot
\end{equation}
%what implies
%As $\sum_{i=0}^{i_o} r_i\ge r_{i_o} = \frac{\dist(a,x)}{\lambda} \left( \frac{3}{2} \right)^{j_o}$ and $\sum_{i=0}^{i_o} r_i \le \frac{\dist(a,x)}{\lambda} \left( \frac{3}{2} \right)^{j_o+1}$, we get
%\begin{equation}\label{eq:jo}
%\frac{\dist(a,x)}{\lambda} \left( \frac{3}{2} \right)^{j_o} \le R < \frac{\dist(a,x)}{\lambda} \left( \frac{3}{2} \right)^{j_o+1}\, \cdot
%\end{equation}
%In particular, $j_o$ is the integer part of $D := \log_{3/2}(R\lambda/\dist(a,x))$.

\item[(6)] By Lebesgue's differentiation theorem on doubling metric measure spaces (see e.g.~\cite[Section 14.6]{HajlaszKoskela}), for $\meas$-a.e.~$x \in B$,
\begin{equation}\label{eq:x}
\lim\limits_{i \to +\infty} f_{B_i} = f(x).
\end{equation}
\end{itemize}

\hfill

\noindent For any $h \in L^s (B,\meas)$, we define the $s$-Haj\l{}asz-Koskela-Riesz potential of $h$ as:
\[
J h (x) := \sum_{i \in I} r_{i} \left( \fint_{B_{i}} |h|^s \di \meas \right)^{1/s} + R \left( \fint_{B} |h|^s \di \meas \right)^{1/s} \qquad \forall x \in B.
\]

\hfill

\textbf{Step 2.} [Proof of \eqref{eq:step2}]

Take $f \in L^1_{loc}(X,\meas)$ and $g \in \UG(f) \cap L^s(B,\meas)$. Let $x \in B\backslash \{a\}$ be such that \eqref{eq:x} is satisfied.\\

a) Assume first $R/2 \le \dist(a,x) < R$. Then
\begin{align}\label{eq:l'équation4}
|f(x) - f_{B}| & \le \sum_{i=0}^{+\infty} |f_{B_{i+1}} - f_{B_{i}}| + |f_{B_{0}}- f_B| \nonumber \\
& \le \sum_{i=0}^{+\infty}\bigg( |f_{B_{i+1}} - f_{W_{i}}| + |f_{W_{i}} - f_{B_{i}}| \bigg) + |f_{B_{0}}- f_{B}| \nonumber \\
& \le \sum_{i=0}^{+\infty} \left( \fint_{W_{i}} |f - f_{B_{i+1}}| \di \meas + \fint_{W_{i}} |f - f_{B_{i}}| \di \meas \right) +  \fint_{B_{0}} |f - f_{B}| \di \meas.
\end{align}
For any $i$, the inclusions $W_i \subset B_i$ and $B_i \subset (2\lambda)^3W_i$ and then the doubling condition imply
\begin{align*}
\fint_{W_{i}} |f - f_{B_{i}}| \di \meas & = \frac{\meas(B_i)}{\meas(W_i)} \frac{1}{\meas(B_i)} \int_{W_{i}} |f - f_{B_{i}}| \di \meas\\
&  \le \frac{\meas((2\lambda)^3W_i)}{\meas(W_i)} \frac{1}{\meas(B_i)} \int_{B_{i}} |f - f_{B_{i}}| \di \meas\\
& \le (2\lambda)^{3Q} \fint_{B_{i}} |f - f_{B_{i}}| \di \meas.
\end{align*}
By the $(p,p)$ Poincaré inequality, which implies the $(1,p)$ one with same constant $C_P$, and then Hölder's inequality, we get\footnote{this is the only place where we use $s\ge p$}
\[
\fint_{B_{i}} |f - f_{B_{i}}| \di \meas \le C_P r_i \left( \fint_{B_i} g^p \di \meas \right)^{1/p} \le C_P r_i \left( \fint_{B_i} g^s \di \meas \right)^{1/s}
\]
which leads to
\begin{equation}\label{eq:l'équation1}
\fint_{W_{i}} |f - f_{B_{i}}| \di \meas \le (2\lambda)^{3Q} C_P r_i \left( \fint_{B_i} g^s \di \meas \right)^{1/s}.
\end{equation}
In the exact same fashion we get, for any $i$,
\begin{equation}\label{eq:l'équation2}
\fint_{W_{i}} |f - f_{B_{i+1}}| \di \meas \le (2\lambda)^{3Q} C_P r_{i+1} \left( \fint_{B_{i+1}} g^s \di \meas \right)^{1/s}.
\end{equation}
Moreover, since $B \subset B_{2\dist(a,x)}(x)$, the doubling condition implies $\meas(B)/\meas(B_0) \le \meas(B_{2\dist(a,x)}(x))/\meas(B_{\dist(a,x)/2}(x)) \le C_D^2=2^{2Q}$. Combined with $B_0 \subset B$, this gives
\begin{align}\label{eq:l'équation3}
\fint_{B_{0}} |f - f_{B}| \di \meas & \le \frac{\meas(B)}{\meas(B_o)} \fint_{B} |f - f_{B}| \di \meas \nonumber \\
&  \le 2^{2Q} \fint_{B} |f - f_{B}| \di \meas \le 2^{2Q}C_P R \left( \fint_{B} g^s \di \meas \right)^{1/s}
\end{align}
where we have used again the $(1,p)$ Poincaré inequality and Hölder's inequality to get the last term. Combining \eqref{eq:l'équation1}, \eqref{eq:l'équation2} and \eqref{eq:l'équation3} with \eqref{eq:l'équation4} and observing that $2^{2Q}\le 2(2\lambda)^{3Q}$ we get
\begin{equation}\label{eq:wesh}
|f(x) - f_{B}| \le 2 (2\lambda)^{3Q}C_P Jg(x).
\end{equation}

b) Assume now $0<\dist(a,x)< R/2$. Acting as above, we get
\begin{align*}
|f(x) - f_{B}| & \le \sum_{i=i_o}^{+\infty} |f_{B_{i+1}} - f_{B_{i}}| + |f_{B_{i_o}}-f_B|\\
& \le 2(2\lambda)^{3Q}C_P \sum_{i=i_o}^{+\infty}r_i \left( \fint_{B_i} g^s \di \meas \right)^{1/s} + \fint_{B_{i_o}} |f - f_{B}| \di \meas.
\end{align*}
Our construction ensures that $B_{i_o}\subset B$ so we get
\begin{align*}
\fint_{B_{i_o}} |f - f_{B}| \le \frac{\meas(B)}{\meas(B_{i_o})} \fint_{B} |f-f_B| & \le \frac{\meas(B)}{\meas(B_{i_o})} C_P R \left( \fint_{B} g^s \di \meas \right)^{1/s}\\
& \le \frac{\meas(B_{2R}(x_{i_o}))}{\meas(B_{i_o})} C_P R \left( \fint_{B} g^s \di \meas \right)^{1/s}.
\end{align*}
In order to bound $\meas(B_{2R}(x_{i_o}))/\meas(B_{i_o})$ from above by means of the doubling condition, we look for $c>0$ such that $B_{cR}(x_{i_o}) \subset B_{i_o}$,  that is to say such that
$$
cR < \left( c_\lambda \right)^{i_o} \frac{\dist(a,x)}{2\lambda} \, \cdot
$$
It follows from \eqref{eq:pratique} that we can choose $c=\omega_{\lambda}$. Then
$$
\frac{\meas(B_{2R}(x_{i_o}))}{\meas(B_{cR}(x_{i_o}))} \le  (2/\omega_{\lambda})^{Q}.
$$
In the end we get
\begin{equation}\label{eq:wesh2}
|f(x) - f_{B}| \le \max(2(2\lambda)^{3Q},(2/\omega_{\lambda})^{Q})C_P Jg(x).
\end{equation}

To conclude, from \eqref{eq:wesh} and \eqref{eq:wesh2}, we obtain \eqref{eq:step2} with
\[
C_1 := \max(2(2\lambda)^{3Q}, (2/\omega_{\lambda})^{Q})C_P.
\]

%\begin{remark}
%As $6^Q\ge (2+2^Q)(2\lambda)^{3Q}$ if and only if $\lambda \le 6^{1/3}/[2(2+2^Q)^{1/(3Q)}]$, we can make $C_1$ independant of $\lambda$ as soon as this condition on $\lambda$ is satisfied.
%\end{remark}

\hfill

\textbf{Step 3.} [Proof of \eqref{eq:step3}]

Let $h \in L^s(B,\meas)$ and $x \in B\backslash \{a\}$ be fixed.  For any $\rho \in (0,\dist(x,a)/\lambda)$, let $i_\rho$ be the unique positive integer such that
\begin{equation}\label{eq:irho}
c_\lambda^{i_\rho+1} \le \frac{\lambda \rho}{\dist(a,x)} < c_\lambda^{i_\rho}
\end{equation}
where we recall that $c_\lambda\in (0,1)$ is defined in \eqref{eq:clambda}. Write
\[
Jh = J_1 h + J_2 h + R \left( \fint_{B} |h|^s \di \meas \right)^{1/s}
\]
where $\displaystyle J_1h(x) := \sum_{i=i_o}^{i_\rho} r_i \left( \fint_{B_i} |h|^s \di \meas \right)^{1/s}$ and $\displaystyle J_2h(x) := \sum_{i=i_\rho+1}^{+\infty} r_i \left( \fint_{B_i} |h|^s \di \meas \right)^{1/s}$.

\begin{claim}
There exists $C_3=C_3(Q,\lambda,s)\ge 1$ such that
\begin{equation}\label{eq:J1}
J_1h(x) \le C_3 R^{Q/s}\rho^{1-Q/s} \left( \fint_{B} |h|^s \di \meas \right)^{1/s}.
\end{equation}
\end{claim}

\begin{proof}
For any $i_o \le i \le i_\rho$ we have $B \subset B_{2R}(x_i)$ so the doubling condition implies
\[
\frac{\meas(B)}{\meas(B_{i})} \le \left( \frac{4R}{r_i}\right)^Q \, \cdot
\]
Thus
\begin{align*}
J_1h(x) & =  \sum_{i=i_o}^{i_\rho} r_i \left( \frac{\meas(B)}{\meas(B_i)} \frac{1}{\meas(B)} \int_{B_i} |h|^s \di \meas \right)^{1/s}\\
& \le (4R)^{Q/s}  \sum_{i=i_o}^{i_\rho} r_i^{1-Q/s}\left( \fint_{B} |h|^s \di \meas \right)^{1/s}.
\end{align*}
Using the concrete expression of $r_i$ given in \eqref{eq:ri} and noting that $1-Q/s<0$ implies $c_\lambda^{1-Q/s} >1$,  we get
\begin{align*}
\sum_{i=i_o}^{i_\rho} r_i^{1-Q/s} & = \left( \frac{\dist(a,x)}{2\lambda} \right)^{1-Q/s} \sum_{i=i_o}^{i_\rho} \left(c_\lambda^{1-Q/s} \right)^{i}\\
& =  \left( \frac{\dist(a,x)}{2\lambda} \right)^{1-Q/s} \frac{c_\lambda^{(1-Q/s)(i_\rho+1)}-c_\lambda^{(1-Q/s)i_o}  }{c_\lambda^{1-Q/s} -1}\\
& \le \left( \frac{\dist(a,x)}{2\lambda} \right)^{1-Q/s} \frac{c_\lambda^{(1-Q/s)(i_\rho+1)}}{c_\lambda^{1-Q/s} -1} \\
& = \left( \frac{\dist(a,x)}{2\lambda} c_\lambda^{i_\rho}\right)^{1-Q/s} \frac{1}{1 -c_\lambda^{Q/s-1}}\\
& \le  \left( \frac{\rho}{2} \right)^{1-Q/s} \frac{1}{1 -c_\lambda^{Q/s-1}} \,,
\end{align*}
where the last inequality follows from \eqref{eq:irho}.  Finally, we obtain \eqref{eq:J1} with
\[\displaystyle C_3=\frac{8^{Q/s}}{2(1-c_\lambda^{Q/s-1})} \, \cdot\]
\end{proof}

\begin{claim}\label{claim:J2}
There exists $C_4=C_4(Q,\lambda,s)\ge 1$ such that
\begin{equation}\label{eq:J2}
J_2 h (x) \le C_4 \rho \left( M|h|^s(x) \right)^{1/s}
\end{equation}
where $\displaystyle M |h|^{s}(\cdot)=\sup_{r>0} \fint_{B_r(\cdot)} |h|^s \di \meas$ is the maximal function of $|h|^{s}$.
\end{claim}

\begin{proof}
For any $i \ge i_\rho +1$,  we may use the inclusion $B_i=B_{r_i}(x_i) \subset B_{\dist(x_i,x)+r_i}(x)$ to get
\begin{align*}
\left( \fint_{B_i} |h|^s \di \meas \right)^{1/s} & \le  \left( \frac{\meas(B_{\dist(x_i,x)+r_i}(x))}{\meas(B_i)}  \fint_{B_{\dist(x_i,x)+r_i}(x)} |h|^s \di \meas \right)^{1/s}\\
& \le \left( \frac{\meas(B_{\dist(x_i,x)+r_i}(x))}{\meas(B_{r_i}(x_i))} \right)^{1/s} (M|h|^s(x))^{1/s}.
\end{align*}
Moreover,
\begin{align*}
\left( \frac{\meas(B_{\dist(x_i,x)+r_i}(x))}{\meas(B_i)} \right)^{1/s} & \le \left( \frac{\meas(B_{2\dist(x_i,x)+r_i}(x_i))}{\meas(B_{r_i}(x_i))} \right)^{1/s} \le  2^{Q/s} \left( 4\lambda+1 \right)^{Q/s}
\end{align*}
where we have used the doubling condition and the equality $\displaystyle r_i=\frac{\dist(x,x_i)}{2\lambda}\,\cdot$ Then
\[
J_2h(x) \le 2^{Q/s} \left( 4\lambda+1 \right)^{Q/s} (M|h|^s(x))^{1/s}  \sum_{i=i_\rho+1}^{+\infty} r_i .
\]
Now if we use \eqref{eq:ri} and \eqref{eq:irho}, we obtain
\begin{align*}
\sum_{i=i_\rho+1}^{+\infty} r_i  & = \frac{\dist(a,x)}{2 \lambda} \sum_{i=i_\rho+1}^{+\infty} (c_\lambda)^{i} \le \frac{\dist(a,x)}{2 \lambda} \frac{c_\lambda^{i_\rho +1}}{1-c_\lambda} \le \frac{\rho}{2(1-c_\lambda)}\, \cdot
\end{align*}
In the end, we get \eqref{eq:J2} with
\begin{equation}\label{eq:C4}
C_4=\frac{2^{Q/s} \left( 4\lambda+1 \right)^{Q/s}}{2(1-c_\lambda)} \, \cdot
\end{equation}
\end{proof}

We now combine the two previous claims to get the following.

\begin{claim}\label{claim:C6}
There exists $C_5=C_5(Q,\lambda,s)\ge 1$ such that 
\begin{equation}\label{eq:C6}
Jh(x) \le C_5 R \left(\left( \fint_{B} |h|^s \di \meas \right)^{1/s} + \left( \fint_{B} |h|^s \di \meas \right)^{1/Q} \left( M|h|^s(x) \right)^{1/s-1/Q} \right).
\end{equation}
Moreover, we can choose $C_5=2\max(C_3,C_4)$.
\end{claim}

\begin{proof}
Let $F:(0,+\infty) \to [0,+\infty]$ be defined by
\[
F(\rho):=R^{Q/s}\rho^{1-Q/s} \left( \fint_{B} |h|^s \di \meas \right)^{1/s} + \rho \left( M|h|^s(x) \right)^{1/s}
\]
for any $\rho>0$. It is easily checked that $F$ admits a global minimum
\[
F(\rho_o) =2R \left( \fint_{B} |h|^s \di \meas \right)^{1/Q}\left( M|h|^s(x) \right)^{1/s-1/Q}
\]
at
\[
\rho_o:=R  \left( \fint_{B} |h|^s \di \meas \right)^{1/Q}\left( M|h|^s(x) \right)^{-1/Q}.
\]

a) Assume \[ \rho_o < \frac{\dist(a,x)}{\lambda} \, \cdot\]
From the two previous claims, we know that for any $\rho \in (0,\dist(x,a)/\lambda)$, it holds
\begin{equation}\label{eq:N}
Jh(x) \le \max(C_3,C_4) \left(R\left( \fint_{B} |h|^s \di \meas \right)^{1/s} + F(\rho)\right)
\end{equation}
since $C_3\ge 1$ and $C_4 \ge 1$. Choosing $\rho = \rho_o$ in
 \eqref{eq:N} yields \eqref{eq:C6} with $C_5=2\max(C_3,C_4)$. 

b)  Assume\[ \rho_o \ge \frac{\dist(a,x)}{\lambda} \, \cdot\]
We may act as in the proof of Claim \ref{claim:J2} to show that
 \[
J_1h(x) + J_2h(x) \le 2^{Q/s} \left( 4\lambda+1 \right)^{Q/s} (M|h|^s(x))^{1/s}  \sum_{i=i_o}^{+\infty} r_i .
\]
The chain of balls $\{B_i\}$ is made in such a way that
 \[
  \sum_{i=i_o}^{+\infty} r_i < 2R.
 \]
Moreover, $2^{Q/s} \left( 4\lambda+1 \right)^{Q/s}\le C_4 \le \max(C_3,C_4)$. Thus
 \begin{align*}
 J_1h(x) + J_2h(x)  \le 2 \max(C_3,C_4)R (M|h|^s(x))^{1/s},
 \end{align*}
hence \eqref{eq:C6} holds $C_5=2\max(C_3,C_4)$.

\end{proof}

We are now in a position to conclude.  Claim \ref{claim:C6} together with \eqref{eq:2p} and the fact that $2^{t-1}\le 2^t$ yields
\[
\fint_B |Jh|^t \di \meas \le C_5^t 2^t R^t \left(\left( \fint_{B} |h|^s \di \meas \right)^{t/s} + \left( \fint_{B} |h|^s \di \meas \right)^{t/Q} \left( \fint_B (M|h|^s)^{\alpha} \di \meas \right) \right)
\]
with $\alpha:=t(1/s-1/Q)$. Since $\alpha >1$,  we can apply the Hardy-Littlewood maximal theorem for doubling metric measure spaces (see e.g.~\cite[Section 14.5]{HajlaszKoskela}) to $|h|^s$. This gives
\[
\fint_B (M|h|^s)^{\alpha} \di \meas \le \fint_B |h|^{s \alpha}\di \meas.
\]
Hölder's inequality with $1/\alpha = 1 + (1-\alpha)/\alpha$ implies 
\[
\fint_B |h|^{s \alpha}\di \meas \le \left( \fint_B |h|^s \di \meas \right)^{\alpha}.
\]
Therefore, observing that $t/Q +\alpha = t/s$, we get
\[
\left( \fint_{B} |h|^s \di \meas \right)^{t/Q} \fint_B (M|h|^s)^{\alpha} \di \meas \le \left( \fint_B |h|^s \di \meas \right)^{t/s}
\]
and finally
\[
\fint_B |Jh|^t \di \meas  \le C_5^t 2^t R^t \left( \fint_B |h|^s \di \meas \right)^{t/s}.
\]
Hence we get \eqref{eq:step3} with $C_2:=4\max(C_3,C_4)$, i.e.
\[
C_2 = 2^{Q/s+1}\max \left( \frac{4^{Q/s}}{1-c_\lambda^{Q/s-1}}, \frac{(4\lambda+1)^{Q/s}}{1-c_\lambda}\right)\, \cdot
\]
\end{proof}

When the space $(X,\dist,\meas)$ additionally satisfies the reverse doubling condition of Theorem \ref{eq:main}, the above constant $C_s$ can be made independent of $s$.

\begin{corollary}\label{eq:localSobolev}
Let $(X,\dist,\meas)$ be satisfying the assumptions of Theorem \ref{eq:main}. Then there exists a constant $C_{LS}= C_{LS}(Q,C_P,\lambda,\eta)>0$ such that for any $s \in [p,\eta)$, any $t \in [s,p^{\star}]$, any $f \in L^1_{loc}(X,\meas)$, any ball $B\subset X$ with radius $R>0$ and any $g \in UG(f)\cap L^s(B,\meas)$,
\begin{equation}\label{eq:claim1}
\left(\int_B|f-f_B|^t \di \meas \right)^{1/t} \le C_{LS} \frac{R}{\meas(B)^{1/s-1/t}} \left( \int_B g^s \di \meas \right)^{1/s}.
\end{equation} 
\end{corollary}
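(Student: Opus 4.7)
The plan is to bootstrap Proposition \ref{prop:localSobolev} from the base Poincaré exponent $\tilde s = p$ up to every $s \in [p,\eta)$ via a Hölder interpolation on the upper gradient, and then to recover the endpoint $t = p^{\star}$ by a Fatou-type limiting argument. Crucially, applying Proposition \ref{prop:localSobolev} at the fixed exponent $\tilde s = p$ (rather than at the variable $s$) sidesteps the potential blow-up of the constant $C_s$ as $s$ approaches $Q$, visible in the explicit bound $C_2 = 2^{Q/s+1}\max(\ldots)$ established in its proof.

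For the main range, fix $s \in [p,\eta)$, $t \in [s, p^{\star})$, a ball $B \subset X$ of radius $R > 0$, a function $f \in L^1_{\loc}(X,\meas)$ and $g \in \UG(f) \cap L^s(B,\meas)$. Since $s \ge p$ and $\meas(B) < +\infty$, Hölder's inequality with conjugate exponents $s/p$ and $s/(s-p)$ gives $g \in L^p(B,\meas)$ together with
$$
\left( \int_B g^p \di \meas \right)^{1/p} \le \meas(B)^{1/p - 1/s} \left( \int_B g^s \di \meas \right)^{1/s}.
$$
Proposition \ref{prop:localSobolev} applied to the pair $(p, t)$, which lies in the admissible range since $t \in [p, p^{\star})$, then produces
$$
\left( \int_B |f - f_B|^t \di \meas \right)^{1/t} \le C_p \frac{R}{\meas(B)^{1/p - 1/t}} \left( \int_B g^p \di \meas \right)^{1/p}
$$
for a constant $C_p = C_p(Q, C_P, \lambda, p)$ independent of $s$ and $t$. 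Substituting the Hölder bound and noticing that the powers of $\meas(B)$ combine to $\meas(B)^{1/t - 1/s}$ yields \eqref{eq:claim1} with $C_{LS} := C_p$, uniformly in $s \in [p, \eta)$ and $t \in [s, p^{\star})$.

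For the endpoint $t = p^{\star}$, choose $t_n \in [s, p^{\star})$ with $t_n \uparrow p^{\star}$. The previous estimate raised to the power $t_n$ reads
$$
\int_B |f - f_B|^{t_n} \di \meas \le \left( C_p R \, \meas(B)^{1/t_n - 1/s} \right)^{t_n} \left( \int_B g^s \di \meas \right)^{t_n/s}.
$$
Since $a^{t_n} \to a^{p^{\star}}$ for every $a \in [0,+\infty)$, Fatou's lemma applied to $|f - f_B|^{t_n} \to |f - f_B|^{p^{\star}}$, together with the continuity in $t$ of the right-hand side, gives
$$
\int_B |f - f_B|^{p^{\star}} \di \meas \le \left( C_p R \, \meas(B)^{1/p^{\star} - 1/s} \right)^{p^{\star}} \left( \int_B g^s \di \meas \right)^{p^{\star}/s},
$$
and raising to the $1/p^{\star}$ power establishes \eqref{eq:claim1} at $t = p^{\star}$. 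No step is substantially difficult; the only point requiring care is this limiting passage at the endpoint, which lies outside the half-open range of Proposition \ref{prop:localSobolev}. The parameter $\eta$ itself never intervenes in the local bound, so the constant can be taken to depend only on $Q$, $C_P$, $\lambda$ and $p$.
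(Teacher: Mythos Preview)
Your argument is correct and takes a genuinely different route from the paper. The paper's proof goes back to the explicit formula
\[
C_2 = 2^{Q/s+1}\max \left( \frac{4^{Q/s}}{1-c_\lambda^{Q/s-1}}, \frac{(4\lambda+1)^{Q/s}}{1-c_\lambda}\right)
\]
derived in the proof of Proposition~\ref{prop:localSobolev} and observes that the restriction $s<\eta<Q$ keeps the denominator $1-c_\lambda^{Q/s-1}$ away from zero, giving a uniform bound $C_2'(Q,\lambda,\eta)$. You instead freeze the first exponent at the base value $p$, invoke Proposition~\ref{prop:localSobolev} once for the pair $(p,t)$, and then recover general $s\ge p$ by H\"older on the gradient term; the exponents of $\meas(B)$ recombine exactly as you write. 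This has two pleasant consequences: your constant depends on $(Q,C_P,\lambda,p)$ rather than on $\eta$ (so the reverse doubling hypothesis is irrelevant for the local estimate, as you note), and you do not need to inspect the internal structure of $C_s$ at all. Conversely, the paper's approach makes visible where the constraint $s<\eta$ would matter if one wanted uniformity in $p$ as well.

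Your Fatou passage at $t=p^{\star}$ is also a small addition: Proposition~\ref{prop:localSobolev} is stated only for $t\in[s,p^{\star})$, and the paper's proof of the corollary does not spell out how the closed endpoint is reached, whereas your limiting argument handles it cleanly.
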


\begin{proof}
Recall from the previous proof that $C_s$ in Proposition \ref{prop:localSobolev} is equal to $C_1C_2$ where $C_1=C_1(Q,C_P,\lambda)$ and 
\[
C_2 = 2^{Q/s+1}\max \left( \frac{4^{Q/s}}{1-c_\lambda^{Q/s-1}}, \frac{(4\lambda+1)^{Q/s}}{1-c_\lambda}\right).
\]
Since $1 \le s < \eta <Q$, we have $Q/s \le Q$ and $Q/s-1 > Q/\eta-1$. Since $c_\lambda \in (0,1)$, then $c_\lambda^{Q/s-1}<c_\lambda^{Q/\eta-1}$ so we can bound $C_2$ from above by
\[
C_2':=C_2'(Q,\lambda,\eta)=2^{Q+1}\max \left( \frac{4^{Q}}{1-c_\lambda^{Q/\eta-1}}, \frac{(4\lambda+1)^{Q}}{1-c_\lambda}\right).
\]
\end{proof}

\section{Weighted Sobolev inequalities}

In this section, we prove Theorem \ref{eq:main}. Let $(X,\dist,\meas)$ be satisfying the assumptions of this theorem. By Proposition \ref{prop:RCA}, we know that $(X,\dist)$ has the RCA property with respect to $o$, and we denote by $\kappa>1$ the coefficient of this property. Take $s \in [p,\eta)$ and $t \in [s,p^{\star}]$. Recall that $\di \mu_{s,t}(\cdot)=w_{s,t}(\cdot)\di\meas(\cdot)$ where $w_{s,t}(\cdot)=\meas(B_{\dist(o,\cdot)}(o))^{t/s-1}\dist(o,\cdot)^{-t}$. As $t/s-1>p/\eta - 1>0$,  the doubly measured metric space $(X,\dist,\meas,\mu_{s,t})$ admits a good covering $\{(U_{i,a},U_{i,a}^{*},U_{i,a}^{\#})\}_{(i,a)\in \Lambda}$ constructed 
from that the $\kappa$-decomposition of $(X,\dist)$ centered at $o$ as in the proof of Proposition \ref{prop:11}. Let $(\cV,\cE,\mu)$ be the associated weighted graph in the sense of Definition \ref{def:weightedgraph}. Then the weighted Sobolev inequality \eqref{eq:weightedSobolev} is proved by the patching Theorem \ref{th:patching}, provided one shows
\begin{equation}\label{eq:goal1}
\left( \int_{U_{i,a}} |f-f_{U_{i,a},\mu_{s,t}}|^t \di \mu_{s,t} \right)^{1/t} \le C_1 \left( \int_{U_{i,a}^*} |\nabla f|_{*,s}^s \di \meas\right)^{1/s}
\end{equation}
for any $f \in L^1_{loc}(X,\meas)$ and $(i,a) \in \Lambda$, and
\begin{equation}\label{eq:goal2}
\left( \sum_{i \in \cV} |f(i)|^t \mu(i) \right)^{1/t} \le C_2 \left( \sum_{i \sim j} |f(i) - f(j)|^{t} \mu(i,j)  \right)^{1/t}
\end{equation}
for any $f:\cV \to \setR$ with finite support. Since the proof of \eqref{eq:goal1} with $U_{i,a}$ and $U_{i,a}^*$ replaced by $U_{i,a}^*$ and $U_{i,a}^\#$ respectively is similar to the one we provide for \eqref{eq:goal1}, we skip it.

\subsection{The discrete $t$ Poincaré inequality}

Let us begin with showing \eqref{eq:goal2}. To this aim, we need two known properties of the discrete $1$ Poincaré inequality on a general weighted graph, see \cite[Section 1.2]{MR2481740}. We provide proofs for the reader's convenience.  We say that a weighted graph $(\cV',\cE',\mu')$ satisfies an isoperimetric inequality if there exists $I>0$ such that
\begin{equation}\label{eq:isoperimetric inequality}
\mu'(\partial \Omega)\ge I \mu'(\Omega)
\end{equation}
for any $\Omega \subset \cV'$ with $\mu'(\Omega)<+\infty$, where $\partial \Omega := \{(i,j) \in \cE : i \in \Omega, j \notin \Omega\}$; here we use the measures $\mu'$ on $\cV$ and $\cE$ defined in Remark \ref{rk:measure}.

 \begin{lemma}\label{lemma graph theory}
Let $(\cV',\cE',\mu')$ be a weighted graph. Then:
\begin{enumerate}
\item the discrete $1$ Poincaré inequality with constant $C$ is equivalent to the isoperimetric inequality with constant $C^{-1}$;
 \item if there exists $A,B\ge 1$ such that $\sup_i \#\{j : i\sim j\} \le A$ and $\sup_{i,j} \mu'(i)/\mu'(j) \le B$, then the discrete $1$ Poincaré inequality with constant $C$ implies the $\tau$-one for any $\tau \ge 1$, with constant $2C\tau (AB)^{1-1/\tau}$.
\end{enumerate}
\end{lemma}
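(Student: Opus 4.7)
The plan for (1) is a coarea/layer-cake argument. For any non-negative $f : \cV' \to [0,+\infty)$ with finite support, I would use the identity
\[
\sum_{i \in \cV'} f(i)\mu'(i) = \int_0^{+\infty} \mu'(\{f > t\})\,dt
\]
together with its edge analogue
\[
\sum_{(i,j) \in \cE'} |f(i)-f(j)|\mu'(i,j) = \int_0^{+\infty} \mu'\bigl(\partial\{f > t\} \cup \partial(\cV' \setminus \{f > t\})\bigr)\,dt,
\]
valid because $|f(i)-f(j)| = \int_0^{+\infty} |\mathbf{1}_{f(i) > t} - \mathbf{1}_{f(j) > t}|\,dt$. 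Plugging the isoperimetric inequality \eqref{eq:isoperimetric inequality} into the integrand of the second identity and comparing with the first would yield the discrete $1$ Poincaré inequality with the reciprocal constant (modulo an innocuous factor of $2$ from $\cE'$ being a symmetric set of ordered pairs). The converse is easier: I would test the $1$ Poincaré inequality against $f = \mathbf{1}_\Omega$ for an arbitrary $\Omega \subset \cV'$ with $\mu'(\Omega) < +\infty$, which directly produces \eqref{eq:isoperimetric inequality}.

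For (2), the plan is to apply the discrete $1$ Poincaré inequality to the function $|f|^\tau$ (which inherits finite support from $f$), yielding
\[
\sum_{i \in \cV'} |f(i)|^\tau \mu'(i) \le C \sum_{(i,j) \in \cE'} \bigl| |f(i)|^\tau - |f(j)|^\tau \bigr|\, \mu'(i,j).
\]
The elementary inequality $|a^\tau - b^\tau| \le \tau(a^{\tau-1} + b^{\tau-1})|a-b|$ for $a,b \ge 0$ and $\tau \ge 1$ would then bound the right-hand side by $C\tau$ times
\[
\sum_{(i,j) \in \cE'} \bigl(|f(i)|^{\tau-1} + |f(j)|^{\tau-1}\bigr)|f(i) - f(j)|\,\mu'(i,j).
\]
Next I would apply Hölder's inequality with conjugate exponents $\tau/(\tau-1)$ and $\tau$ to each of the two summands, producing a product of $\bigl(\sum_{(i,j)} |f(i)|^\tau \mu'(i,j)\bigr)^{(\tau-1)/\tau}$ (resp.~the $j$-variant) and $\bigl(\sum_{(i,j)} |f(i) - f(j)|^\tau \mu'(i,j)\bigr)^{1/\tau}$. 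The bound $\mu'(i,j) \le \min(\mu'(i),\mu'(j))$ together with the degree bound $A$ lets me swap the edge-sum for a vertex-sum, and the weight-ratio bound $B$ lets me pass between the $i$- and $j$-variants, giving
\[
\sum_{(i,j) \in \cE'} |f(i)|^\tau \mu'(i,j) \le AB \sum_{i \in \cV'} |f(i)|^\tau \mu'(i).
\]
Substituting and absorbing the factor $\bigl(\sum_i |f(i)|^\tau \mu'(i)\bigr)^{(\tau-1)/\tau}$ (finite by the finite support of $f$) into the left-hand side then gives the $\tau$-Poincaré inequality with constant $2C\tau(AB)^{1-1/\tau}$, where the $2$ absorbs the splitting into two Hölder applications.

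The proof is essentially bookkeeping and I do not expect a deep obstacle. The main care will be in the factor-of-$2$ accounting due to $\cE'$ being a symmetric set of ordered pairs -- each unordered boundary edge is counted twice in $\mu'(\partial \Omega \cup \partial(\cV'\setminus \Omega))$ -- and in ensuring the absorption step at the end of (2) is legitimate, which is guaranteed precisely by the finite-support hypothesis on $f$.
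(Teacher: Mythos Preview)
Your proposal is correct and matches the paper's proof: the coarea/layer-cake argument for (1), and applying the $1$-Poincar\'e inequality to $|f|^\tau$ followed by H\"older for (2). The only cosmetic difference is that in (2) the paper exploits the symmetry $(i,j)\leftrightarrow(j,i)$ of $\cE'$ to merge the two terms $|f(i)|^{\tau-1}$ and $|f(j)|^{\tau-1}$ into one before a single H\"older application, whereas you handle them separately; both routes give the stated constant.
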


\begin{proof}
\textit{(1)} For any $\Omega \subset \cV'$ with $\mu'(\Omega)<+\infty$, the discrete $1$ Poincaré inequality applied to the characteristic function of $\Omega$ implies $\mu'(\Omega)\le C_2 \mu'(\partial \Omega)$, hence one side of the equivalence is proved. Conversely, if $f:\cV' \to \setR$ has a finite support, by Cavalieri's principle and the isoperimetric inequality,
\begin{align*}
\sum_{i \in \cV'} f(i) \mu'(i) & = \|f\|_{L^1(\cV',\mu')} = \int_0^{+\infty} \mu'(\{f>t\}) \di t \le I^{-1} \int_0^{+\infty} \mu'(\partial \{f>t\}) \di t\\
& = I^{-1} \int_0^{+\infty} \sum_{ i\sim j\atop f(j)\le t \le f(i)} \mu'(i,j) \di t = I^{-1} \sum_{(i, j) \in \cE'} |f(i)-f(j)| \mu'(i,j).
\end{align*}

\noindent \textit{(2)} Take $\tau \ge 1$. Let $f : \cV' \to \setR$ be with finite support. Apply the discrete $1$ Poincaré inequality to $|f|^{\tau}$:
$$
\sum_{i \in \cV'} |f(i)|^{\tau} \mu'(i) \le C \sum_{(i,j)\in \cE'} ||f(i)|^t-|f(j)|^{\tau}| \mu'(i,j).
$$
Convexity of $r \mapsto r^{\tau}$ and the mean value theorem gives $||a|^{\tau}-|b|^{\tau}| \le \tau \max(|a|,|b|)^{\tau-1} ||a|-|b||$ for any $a,b\in \setR$, and  $\max(|a|,|b|)^{\tau-1} \le (|a|^{\tau-1}+|b|^{\tau-1})$ and $||a|-|b||\le |a-b|$. From this we get
\begin{align*}
\sum_{i \in \cV'} |f(i)|^\tau \mu'(i) & \le C \tau  \sum_{(i,j)\in \cE'} (|f(i)|^{\tau-1}+|f(j)|^{\tau-1})|f(i)-f(j)| \mu'(i,j)\\
& = 2 C \tau \sum_{(i,j)\in \cE'} |f(i)|^{\tau-1}|f(i)-f(j)| \mu'(i,j)\\
& \le 2 C \tau \left( \sum_{(i,j)\in \cE'} |f(i)|^\tau \mu'(i,j)\right)^{1-1/\tau} \left(\sum_{(i,j)\in \cE'} |f(i)-f(j)|^\tau \mu'(i,j)\right)^{1/\tau},
\end{align*}
and the result follows since
\[
\sum_{(i,j)\in \cE'} |f(i)|^\tau \mu'(i,j) = \sum_{i \in \cV'} \sum_{j \sim i} |f(i)|^\tau \mu'(i,j) \le AB \sum_{i \in \cV'} |f(i)|^\tau \mu'(i).
\]
\end{proof}

We also need the following claim. Recall that $\kappa$ is the coefficient of the RCA property of $(X,\dist)$.

\begin{claim}
There exists a constant $C_e=C_e(Q,\kappa)>1$ such that for any $(i,a) \in \Lambda$,
\begin{equation}\label{Ce}
C_{e}^{-1}\frac{\meas(B_{\kappa^{i-1}}(o))^{t/s}}{\kappa^{(i+1)t}} \le \mu(i,a) \le \frac{\meas(B_{\kappa^{i+1}}(o))^{t/s} }{\kappa^{(i-1)t}}\, \cdot
\end{equation}
\end{claim}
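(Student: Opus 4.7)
The plan would be to estimate $\mu(i,a)=\mu(U_{i,a})=\int_{U_{i,a}} w_{s,t}\,\di\meas$, with $w_{s,t}(x)=\meas(B_{\dist(o,x)}(o))^{t/s-1}\dist(o,x)^{-t}$, by combining a pointwise two-sided bound on the density $w_{s,t}$ over $U_{i,a}$ with matching two-sided bounds on $\meas(U_{i,a})$. The first step is the geometric observation that $U_{i,a}\subset A(o,\kappa^{i-1},\kappa^{i+1})$: the original component $V_{i,a}$ lies in $A(o,\kappa^{i-1},\kappa^{i})$, and the only enlargement in the construction of Proposition \ref{prop:11} consists in absorbing pieces $V_{i+1,a'}$ with $(i+1,a')\in\Gamma\setminus\Lambda$, all of which sit inside $A(o,\kappa^{i},\kappa^{i+1})$. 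Since $t/s-1\ge 0$ and $r\mapsto\meas(B_r(o))$ is non-decreasing, this inclusion yields, for every $x\in U_{i,a}$,
$$\meas(B_{\kappa^{i-1}}(o))^{t/s-1}\kappa^{-(i+1)t}\le w_{s,t}(x)\le \meas(B_{\kappa^{i+1}}(o))^{t/s-1}\kappa^{-(i-1)t}.$$

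The upper bound in the claim then drops out by integrating the right-hand inequality and using $\meas(U_{i,a})\le\meas(B_{\kappa^{i+1}}(o))$. For the lower bound one instead needs a reverse control of the form $\meas(U_{i,a})\ge C_e^{-1}\meas(B_{\kappa^{i-1}}(o))$ for some $C_e=C_e(Q,\kappa)$, and this is the heart of the proof. The idea is to re-examine the proof of Lemma \ref{lem:1}, which actually exhibits an explicit ball $B_{\rho_i}(x_{i,a})\subset V_{i,a}\subset U_{i,a}$ with $\rho_i=(\kappa-1)\kappa^{i-1}/4$ and $x_{i,a}\in\overline{V_{i,a}}\cap S_{r_i}(o)$, $r_i=(\kappa^{i}+\kappa^{i-1})/2$. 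Since $B_{\kappa^{i-1}}(o)\subset B_{2\kappa^i}(x_{i,a})$ by the triangle inequality (using $\kappa\ge 1$), the doubling condition in its polynomial form $\meas(B_R(x))/\meas(B_r(x))\le C_D(R/r)^Q$ then gives
$$\meas(B_{\kappa^{i-1}}(o))\le \meas(B_{2\kappa^i}(x_{i,a}))\le C_D\Bigl(\tfrac{8\kappa}{\kappa-1}\Bigr)^{Q}\meas(B_{\rho_i}(x_{i,a}))\le C_D\Bigl(\tfrac{8\kappa}{\kappa-1}\Bigr)^{Q}\meas(U_{i,a}),$$
and combining this with the pointwise lower bound on $w_{s,t}$ would establish the left-hand inequality with $C_e=C_D(8\kappa/(\kappa-1))^Q$, depending only on $Q$ and $\kappa$ and manifestly $>1$.

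The main (though mild) obstacle is precisely this lower bound on $\meas(U_{i,a})$: nothing in the abstract definition of a $\kappa$-decomposition forbids a component from having arbitrarily small measure, so one really has to import the inscribed-ball estimate hidden in the proof of Lemma \ref{lem:1}. Once that ingredient is available, the remainder is a direct combination of the monotonicity of $r\mapsto\meas(B_r(o))$, the doubling property, and the inclusion $U_{i,a}\subset A(o,\kappa^{i-1},\kappa^{i+1})$.
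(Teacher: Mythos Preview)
Your proposal is correct and follows essentially the same route as the paper: the inclusion $U_{i,a}\subset A(o,\kappa^{i-1},\kappa^{i+1})$ gives the pointwise bounds on $w_{s,t}$, the upper bound on $\meas(U_{i,a})$ is trivial, and the lower bound comes from the inscribed ball $B_{\rho_i}(x_{i,a})\subset U_{i,a}$ together with doubling, exactly as in the paper's argument. Your constant $C_e=C_D(8\kappa/(\kappa-1))^Q$ even coincides with the paper's $(16\kappa/(\kappa-1))^Q$ once one uses $C_D=2^Q$.
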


\begin{proof}
Take $(i,a) \in \Lambda$. Then $U_{i,a} \subset A(o,\kappa^{i-1},\kappa^{i+1})$, so
\[
\meas(U_{i,a}) \frac{\meas(B_{\kappa^{i-1}}(o))^{t/s-1}}{\kappa^{(i+1)t}} \le \mu(i,a) \le \meas(U_{i,a}) \frac{\meas(B_{\kappa^{i+1}}(o))^{t/s-1}}{\kappa^{(i-1)t}} \, \cdot
\]
Bound $\meas(U_{i,a})$ from above by $\meas(B_{\kappa^{i+1}}(o))$ to get the upper bound in \eqref{Ce}. Choose $x \in U_{i,a} \cap S_{(\kappa^{i}+\kappa^{i-1})/2}(o)$ and note that $$\meas(B_{\kappa^{i-1}}(o)) \le \meas(B_{2\kappa^{i}}(x)) \le C_e \meas(B_{\rho_i}(x)) \le C_e \meas(U_{i,a}),$$ where $\rho_i:= (\kappa^{i}-\kappa^{i-1})/4$ and $C_e:= \left( 16\kappa/(\kappa - 1) \right)^{Q}$, by the doubling condition.  This leads to the lower bound in \eqref{Ce}.
\end{proof}

\begin{proposition}\label{linear isoperimetric inequality}
The isoperimetric inequality \eqref{eq:isoperimetric inequality} holds on the weighted graph $(\cV,\cE,\mu)$ with a constant $I$ depending only on $Q$, $p$, $\lambda$, $C_p$, $\eta$, $C_o$, $s$ and $t$. 
\end{proposition}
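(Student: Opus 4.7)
The plan is to verify the isoperimetric inequality \eqref{eq:isoperimetric inequality} directly via a flow-based charging argument on $(\cV,\cE,\mu)$, then invoke Lemma \ref{lemma graph theory}(1). Three structural facts drive the proof: geometric growth of layer masses, bounded layer cardinality (from Lemma \ref{lem:1}), and the existence of an upward neighbor for every vertex.

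Setting $V_j := \meas(B_{\kappa^j}(o))$ and $m_j := \sum_{a\in\aleph(j)}\mu(j,a)$, I would first sharpen \eqref{Ce} into $m_j \asymp V_j^{t/s}\kappa^{-jt}$, using that the $(U_{j,a})_{a\in\aleph(j)}$ cover (modulo $\meas$-null sets) the annulus $A(o,\kappa^{j-1},\kappa^{j})$, on which $w_{s,t}$ has size $\asymp V_j^{t/s-1}\kappa^{-jt}$ and $\meas(A)\asymp V_j$ (the latter by reverse doubling, after choosing $\kappa$ large enough that $C_o\kappa^\eta\gg 1$). A second application of reverse doubling yields the fundamental growth
\[
\frac{m_{j+1}}{m_j}\ge \delta := C_1\, C_o^{t/s}\kappa^{t(\eta/s-1)}.
\]
Because $\eta/s>1$ and because the RCA coefficient $\kappa$ may be enlarged without losing the property, one may assume $\delta>1$. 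Combined with Lemma \ref{lem:1}, the same argument gives $\mu(j,a)\asymp m_j/h$, so that vertex masses in neighbouring layers are mutually comparable up to constants depending only on $Q,p,\lambda,C_P,\eta,C_o,s,t$.

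Next, property \eqref{eq:prop1} provides, for every vertex $(i,a)\in\cV$, at least one neighbor in layer $i+1$ (a set $U_{i+1,b}$ whose closure contains a point of $\overline{U_{i,a}}\cap S_{\kappa^i}(o)$); I fix such a choice $\phi(i,a)$. Given $\Omega\subset\cV$ with $\mu(\Omega)<+\infty$ and $v\in\Omega$ in layer $i$, the forward orbit $v,\phi(v),\phi^2(v),\dots$ passes through pairwise distinct layers $i,i{+}1,\dots$ and satisfies $\mu(\phi^k(v))\ge c\,\delta^k\mu(v)$; were it to stay in $\Omega$ forever, the resulting disjoint sum would diverge, contradicting $\mu(\Omega)<+\infty$. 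Hence there exists a smallest $K(v)\ge 0$ with $\phi^{K(v)+1}(v)\notin\Omega$, and I set $e(v):=(\phi^{K(v)}(v),\phi^{K(v)+1}(v))\in\partial\Omega$.

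Finally, for each exit edge $e=((j,b),(j+1,c))\in\partial\Omega$ I would bound the basin $B(e):=\{v\in\Omega:e(v)=e\}$: a vertex $v\in B(e)\cap\aleph(j-k)$ satisfies $\phi^k(v)=b$, so $|B(e)\cap\aleph(j-k)|\le\#\aleph(j-k)\le h$. Together with $m_{j-k}\le\delta^{-k}m_j$ and $\mu(j-k,a)\asymp m_{j-k}/h$, the geometric series gives $\mu(B(e))\le C\,m_j\,\delta/(\delta-1)$, while $\mu(e)=\min(\mu(b),\mu(c))\asymp m_j/h$, so $\mu(B(e))\le C'\mu(e)$ uniformly. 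Since the basins partition $\Omega$,
\[
\mu(\Omega)=\sum_{e}\mu(B(e))\le C'\sum_{e\in\partial\Omega}\mu(e)=C'\,\mu(\partial\Omega),
\]
which is \eqref{eq:isoperimetric inequality} with $I=(C')^{-1}$. The main obstacle is the first step: extracting $\delta>1$ from reverse doubling while keeping all the multiplicative constants explicit and parameter-controlled. Once the geometric growth is secured, the flow-and-basin bookkeeping is a standard Minerbe-style argument.
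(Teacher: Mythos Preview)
Your flow/basin strategy is in the right spirit (and is indeed the Minerbe template), but there are two issues—one cosmetic, one substantive.

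\textbf{First issue (minor).} You cannot enlarge $\kappa$: the graph $(\cV,\cE,\mu)$ in the statement is the one already built from the $\kappa$-decomposition with the fixed RCA coefficient of Section~4. Fortunately you do not need $m_{j+1}/m_j>1$. What reverse doubling and \eqref{Ce} give directly (with the given $\kappa$) is the $k$-step estimate
\[
\frac{m_j}{m_{j-k}}\;\ge\;c\,\bigl(\kappa^{\,t(\eta/s-1)}\bigr)^{k},\qquad \kappa^{\,t(\eta/s-1)}>1,
\]
with $c=c(Q,\kappa,C_o,s,t)$. This is exactly what the basin bound needs (a convergent geometric series in $k$), and it also forces the forward orbit to exit any set of finite $\mu$-mass. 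So simply drop the ``$\delta>1$ via enlarging $\kappa$'' step and work with the $k$-step ratio.

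\textbf{Second issue (genuine gap).} The claim that \eqref{eq:prop1} produces, for every vertex $(i,a)$, a neighbour in layer $i{+}1$ is not correct. A point $p\in\overline{U_{i,a}}\cap S_{\kappa^{i}}(o)$ need not lie in the closure of any $U_{i+1,b}$ with $(i{+}1,b)\in\Lambda$: it may be a ``dead end'' (no points of $X$ near $p$ at distance $>\kappa^{i}$ from $o$), or it may lie only in the closure of a component $V_{i+1,c}$ with $(i{+}1,c)\notin\Lambda$, which was absorbed into a layer-$i$ piece, yielding a same-layer edge rather than an upward one. The paper's proof explicitly acknowledges this possibility and repairs it using the RCA property: if $(l,a)$ has no edge to layer $l{+}1$, one finds some $(l,b)$ that does and connects $(l,a)$ to $(l,b)$ by a chain of edges inside layers $l{-}1$ and $l$. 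Your map $\phi$ cannot be defined from \eqref{eq:prop1} alone; you would need a similar RCA-based detour to guarantee that every vertex has a path to an upward edge within a bounded number of layers, and then control the multiplicity of basins accordingly.

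\textbf{Comparison with the paper.} The paper bypasses the whole flow/basin bookkeeping. It takes the highest layer $l$ meeting $\Omega$, uses RCA once to locate a single edge $e\in\partial\Omega$ at level $\approx l$ with $\mu(e)\gtrsim m_l$, and then bounds $\mu(\Omega)\le\sum_{i\le l} m_i\lesssim m_l$ via the same geometric series. This is shorter and avoids defining $\phi$ altogether. Your approach, once the upward-edge issue is handled via RCA, would also work and gives a slightly more ``distributed'' proof (each vertex is charged to its own exit edge), but at the cost of more combinatorics.
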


\begin{proof}
For any $i \in \setN$, let $A_i$ be the set of vertices $\{(i,a) \, : \, 1 \le a \le h(i)\} \subset \cV$. Let $\Omega \subset \cV$ be such that $\mu(\Omega)<+\infty$. Let $l$ be the greatest integer $i$ such that $A_{i} \cap \Omega \neq \emptyset$.  With no loss of generality we may assume that $l\ge 3$.  Let $a$ between $1$ and $h(l)$ be such that $(l,a) \in \Omega$. Let $e$ be an element of $\partial \Omega$ chosen in the following way:
\begin{itemize}
\item if there is an edge between $(l,a)$ and a vertex of the form $(l+1,b)$, let $e$ be this edge;
\item otherwise, as $X$ is non-compact and connected, there exists necessarily two vertices $(l+1,b')$ and $(l,b)$ that are linked by an edge. Then the RCA property of $(X,\dist)$ ensures that there exists a finite sequence of edges entirely included in $A_{l-1}\cup A_l$ that connects $(l,a)$ and $(l,b)$. Among these edges, there is necessarily one of them which belongs to $\partial \Omega$ and which we choose as $e$.
\end{itemize}

Note that in both cases, there exists an integer $b$ between $1$ and $h(l)$ such that $\mu(e) \ge \mu(l,b)$, so that \eqref{Ce} implies
\[
\mu(\partial \Omega) \ge \mu(e) \ge C_e^{-1}\frac{\meas(B_{\kappa^{l-1}}(o))^{t/s}}{\kappa^{(l+1)t}}\, \cdot
\]
Moreover, thanks to \eqref{Ce} again and the upper bound on the $h(i)$ given by Lemma \ref{lem:1},
\[
\mu(\Omega) \le \sum_{0\le i \le l} \sum_{1 \le a \le h(i)} \mu(i,a) \le C_e h \sum_{0\le i \le l} \frac{\meas(B_{\kappa^{i+1}}(o))^{t/s} }{\kappa^{(i-1)t}}\, \cdot
\]
Therefore, we can perform the next computation where the reverse doubling condition \eqref{eq:reverse} plays a crucial role:
%C_{e}^{-1}\frac{\meas(B_{\kappa^{i-1}}(o))^{t/s}}{\kappa^{(i+1)t}} \le \mu(i,a) \le \frac{\meas(B_{\kappa^{i+1}}(o))^{t/s} }{\kappa^{(i-1)t}}
\begin{align*}
\frac{\mu(\Omega)}{\mu(\partial \Omega)}
& \le C_{e}^2 h \sum_{0 \le i \le l} \kappa^{t(l-i+2)}\left( \frac{\meas(B_{\kappa^{i+1}}(o))}{\meas(B_{\kappa^{l-1}}(o))} \right)^{t/s} \\
& \le C_e^2  h \left(  \sum_{0 \le i \le l-3} \kappa^{t(l-i+2)}\left( \frac{\meas(B_{\kappa^{i+1}}(o))}{\meas(B_{\kappa^{l-1}}(o))}\right)^{t/s} + 1 + \sum_{l-1 \le i \le l} \kappa^{t(l-i+2)}\left( \frac{\meas(B_{\kappa^{i+1}}(o))}{\meas(B_{\kappa^{l-1}}(o))} \right)^{t/s}  \right) \\
& \le C_e^2  h \left(  \sum_{0 \le i \le l-3} C_o^{t/s} \kappa^{t(l-i+2)+(i-l) \eta t/s} + 1 + \sum_{l-1 \le i \le l}2^{Qt/s} \kappa^{t(l-i+2)+(i-l+2)Q t/s} \right) \\
& \le C_e^2  h \left( C_o^{t/s} \kappa^{2t} \sum_{0 \le i \le l-3} \kappa^{t(l-i)(1-\eta/s)} + 1 + 2^{Qt/s} \kappa^{2t}(1+\kappa^{t(1-Q/s)}) \right).
\end{align*}
Now we have
\[
 \sum_{0 \le i \le l-3} \kappa^{t(l-i)(1-\eta/s)} \le \sum_{j=0}^{+\infty} [\kappa^{t(1-\eta/s)}]^{j}=:S < +\infty
\]
since $1-\eta/s<0$. Thus
\[
\frac{\mu(\Omega)}{\mu(\partial \Omega)} \le C_e^2  h \left( C_o^{t/s} \kappa^{2t} S + 1 + 2^{Qt/s} \kappa^{2t}(1+\kappa^{t(1-Q/s)}) \right) =: I^{-1}.
\]
Now $C_e$ and $h$ depend only on $Q$ and $\kappa$, the constant $S$ depends only on $t$, $s$, $\eta$ and $\kappa$, and $\kappa$ depends only on $Q$, $p$, $\lambda$, $C_p$, $\eta$ and $C_o$. Therefore,  $I$ depends only on $Q$, $p$, $\lambda$, $C_p$, $\eta$, $C_o$, $s$ and $t$.

\end{proof}

Let us conclude.  It follows from Lemma \ref{lem:1} that there exists $A=A(Q,\kappa)\ge 1$ such that $\sup_{i \in \cV} \#\{j \in \cV : j\sim i\} \le A$. Moreover, a simple reasoning based on the doubling condition shows that there exists $B=B(Q,\kappa) \ge 1$ such that $\sup_{(i,j) \in \cE} \mu(i)/\mu(j) \le B$, see e.g.~\cite[Claim 2]{Tew}. Therefore, combining Proposition \ref{linear isoperimetric inequality} and Lemma \ref{lemma graph theory}, we get that $(\cV,\cE,\mu)$ satisfies the discrete $\tau$ Poincaré inequality for any $\tau\ge 1$, hence it satisfies \eqref{eq:goal2}, with a constant depending only on $Q$, $p$, $\lambda$, $C_p$, $\eta$, $C_o$, $s$ and $t$.

\subsection{A local Sobolev inequality on pieces of annuli}

Let us now derive a property that helps getting the continuous Sobolev-Neumann inequalities \eqref{eq:goal1}. 

\begin{proposition}\label{prop:locala}
Let $(X,\dist,\meas)$ be a complete PI space with doubling dimension $Q$ and Poincaré exponent $p<Q$. Set $p^\star:=Qp/(Q-p)$. Then for any $\alpha>1$, $\delta\in(0,1)$ and $s\in(p,p^\star)$, there exists a constant $C=C(Q,\alpha,\delta,s)>0$ such that for any $o \in X$, any $R>0$, any connected Borel subset $A$ of $A(o,R,\alpha R)$, and any $t\in(s,p^\star)$,
\[
\left( \int_A |f-f_A|^t \di \meas \right)^{1/t} \le C \frac{R}{\meas(B_R(o))^{1/s-1/t}} \left( \int_{A_\rho} g^s \di \meas \right)^{1/s}
\]
for any $f \in L^1_{loc}(X,\meas)$ and $g \in \UG(f) \cap L^s(A_\rho,\meas)$, where $\rho=\delta R$ and $A_\rho = \bigcup_{x \in A} B_\rho(x)$.
\end{proposition}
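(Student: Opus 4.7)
The plan is to combine a Vitali-type covering of $A$ at scale $\simeq \rho$, the local Sobolev inequality of Proposition~\ref{prop:localSobolev}, and a discrete chaining argument based on topological connectedness of $A$. The geometric idea is that $A$ sits inside the annulus $A(o,R,\alpha R)$, so doubling makes every ball measure at scale $\simeq R$ with center in $A$ comparable to $\meas(B_R(o))$; this is what converts ``local'' Sobolev prefactors into the global one on the right-hand side.

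Concretely, I would set $r:=\rho/4=\delta R/4$ and extract a maximal family $\{x_k\}_{k\in K}\subset A$ such that the balls $B_{r/2}(x_k)$ are pairwise disjoint. Maximality gives $A\subset \bigcup_k B_r(x_k)$; disjointness together with the inclusion $\bigcup_k B_{r/2}(x_k)\subset B_{2\alpha R}(o)$ yields, via doubling, both a two-sided comparison $\meas(B_r(x_k))\simeq \meas(B_R(o))$ (constants in $Q,\alpha,\delta$) and a uniform bound $\#K\le N(Q,\alpha,\delta)$. Since $x_k\in A$ and $3r<\rho$, one has $B_{3r}(x_k)\subset B_\rho(x_k)\subset A_\rho$ for every $k$, so Proposition~\ref{prop:localSobolev} applies on both $B_r(x_k)$ and $B_{3r}(x_k)$ with the $g$-norm taken only over a subset of $A_\rho$, and the prefactor $r/\meas(B_r(x_k))^{1/s-1/t}$ gets absorbed into $C\cdot R/\meas(B_R(o))^{1/s-1/t}$.

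For the chaining, connectedness of $A$ forces the incidence graph $k\sim k'\Leftrightarrow B_r(x_k)\cap B_r(x_{k'})\neq\emptyset$ to be connected: otherwise $A$ would be partitioned into two disjoint relatively open pieces. Fix $k_0\in K$; for each $k$, chain $k_0$ to $k$ in at most $N$ steps and telescope. Each single-step difference $|f_{B_r(x_{k_i})}-f_{B_r(x_{k_{i+1}})}|$ is controlled by inserting $f_{B_{3r}(x_{k_i})}$, using the inclusions $B_r(x_{k_i}),B_r(x_{k_{i+1}})\subset B_{3r}(x_{k_i})$, H\"older's inequality, and Proposition~\ref{prop:localSobolev} on $B_{3r}(x_{k_i})$. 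Summing and using $N\le N(Q,\alpha,\delta)$ yields
\[
|f_{B_r(x_{k_0})}-f_{B_r(x_k)}|\le C\,\frac{R}{\meas(B_R(o))^{1/s}}\left(\int_{A_\rho} g^s\di\meas\right)^{1/s}\qquad\forall\,k\in K.
\]

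To assemble, I would use Lemma~\ref{lem:elem} to replace $f_A$ by the constant $c:=f_{B_r(x_{k_0})}$, cover $A$ by the $B_r(x_k)$, split $|f-c|^t\le 2^{t-1}(|f-f_{B_r(x_k)}|^t+|f_{B_r(x_k)}-c|^t)$ on each $B_r(x_k)$, and bound the two resulting sums of at most $N$ terms: the first by direct application of Proposition~\ref{prop:localSobolev} on $B_r(x_k)$, the second by the chain estimate combined with $\meas(B_r(x_k))\le C\meas(B_R(o))$. Extracting the $1/t$-th power gives the claim. The main obstacle will be calibrating the covering scale so that the enlarged balls $B_{3r}(x_k)$ used in the chaining still fit inside $A_\rho$, which dictates $r\le\rho/3$; once that is set, the rest is bookkeeping controlled by doubling. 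A minor subtlety worth flagging is that $A$ is merely topologically connected (not necessarily path-connected), so the chain must be produced via the open-cover connectedness argument rather than by following a curve inside $A$.
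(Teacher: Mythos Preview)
Your argument is correct and follows the same geometric skeleton as the paper: cover $A$ by a finite net of balls at scale $\simeq\rho$, use doubling to bound the number of balls and to make all local measures comparable to $\meas(B_R(o))$, apply Proposition~\ref{prop:localSobolev} on each ball, and then exploit the connectedness of $A$ to glue the local estimates. The paper packages the gluing step abstractly: it verifies that the balls form a good covering of $(A,A_\rho)$, invokes the finite-graph discrete Poincar\'e--Neumann inequality of Lemma~\ref{cor:discretePN}, and then applies the modified patching Theorem~\ref{th:patching2}. You instead do the discrete step by hand, telescoping along a path in the incidence graph and bounding each hop via Proposition~\ref{prop:localSobolev} on the enlarged ball $B_{3r}(x_{k_i})$. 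Your route is more self-contained and makes the dependence on $N(Q,\alpha,\delta)$ transparent; the paper's route has the virtue of reusing its general patching machinery. One point you make explicit that the paper leaves implicit is the verification that connectedness of $A$ forces connectedness of the incidence graph---this is indeed needed (the discrete Poincar\'e--Neumann lemma the paper cites is stated for connected graphs), and your open-cover argument handles it cleanly.
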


In order to prove this result, we need a suitable modification of the notion of a good covering and an associated modified patching theorem.

Let $(X,\dist,\meas)$ be a metric measure space, and let $A \subset A^{\#} \subset X$ be two Borel sets.  We call good covering of $(A,A^{\#})$ any countable family of triples $\{(U_{i},U_{i}^{*},U_{i}^{\#})\}_{i \in I}$ of Borel sets with finite $\meas$-measure satisfying (1), (3) and (4) in Definition \ref{def:goodcovering} and such that $A \backslash E \subset \bigcup_{i} U_{i} \subset \bigcup_{i} U_{i}^{\#} \subset A^{\#}$ for some Borel set $E \subset X$ with $\meas(E)=0$. For $1 \le s \le t < +\infty$, we say that such a good covering satisfies local $(t,s)$  continuous Sobolev-Neumann inequalities if there exists a constant $C_1>0$ such that for any $i$,
 \begin{equation}\label{continuous Sobolev-Neumann 2.1}
\left( \int_{U_{i}} |f-f_{U_{i}}|^t \di \meas \right)^{1/t} \le C_1 \left( \int_{U_i^*} g^s \di \meas\right)^{1/s}
\end{equation}
for any $f \in L^1_{loc}(U_i,\mu)$ and $g \in UG(f)\cap L^s(U_i^*,\meas)$, and
\begin{equation}\label{continuous Sobolev-Neumann 2.2}
\left( \int_{U_{i}^{*}} |f-f_{U_{i}^{*}}|^t \di \meas \right)^{1/t} \le C_1 \left( \int_{U_i^\#} g^s \di \meas\right)^{1/s}
\end{equation}
for any $f \in L^1_{loc}(U_i^*,\mu)$ and $g \in UG(f)\cap L^s(U_i^\#,\meas)$.

From any good covering $\{(U_{i},U_{i}^{*},U_{i}^{\#})\}_{i \in I}$ of $(A,A^{\#})$ as above, we can build a weighted graph $(\cV,\cE,\mu)$ according to Definition \ref{def:weightedgraph} and say that $\{(U_{i},U_{i}^{*},U_{i}^{\#})\}_{i \in I}$ satisfies a discrete $t$ Poincaré-Neumann inequality, where $t \ge 1$,  if there exists $C_2>0$ such that
\begin{equation}\label{eq:discretePoincaréNeumann}
\left( \sum_{i \in \cV} |f(i) - \mu(f)|^t \mu(i) \right)^{1/t} \le C_2 \left( \sum_{\{ i,j \} \in \cE} |f(i) - f(j)|^t \mu(i,j) \right)^{1/t}
\end{equation}
for all finitely supported $f:\cV \to \setR$, where $\mu(f) := \left(  \sum_{i \, : \, f(i) \neq 0} \mu(i)\right)^{-1} \sum_{i} f(i)\mu(i)$.

\begin{theorem}\label{th:patching2}
Let $(X,\dist,\meas)$ be a metric measure space, and $A\subset A^{\#} \subset X$ two Borel sets such that $0<\meas(A)<+\infty$. Assume that $(A,A^{\#})$ admits a good covering satisfying local $(t,s)$ continuous Sobolev-Neumann inequalities and a discrete $t$ Poincaré-Neumann inequality for some numbers $1 \le t \le s < +\infty$. Then there exists a constant $C>0$ such that for any $f \in L^1_{loc}(X,\meas)$ and any $g \in UG(f) \cap L^s(A^{\#},\meas)$,
\begin{equation}\label{global Sobolev inequality 2}
\left( \int_{A} |f - f_{A}|^t \di \meas \right)^{1/t} \le C \left( \int_{A^{\#}} g^s \di \meas \right)^{1/s}.
\end{equation}
Moreover, one can choose
\[
C=C_1 C'(Q_1,Q_2,C_2,s)
\]
where $C_1$ is the constant of the local continuous Sobolev-Neumann inequalities \eqref{continuous Sobolev-Neumann 2.1} and \eqref{continuous Sobolev-Neumann 2.2}.
\end{theorem}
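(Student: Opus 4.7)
The plan is to mimic the proof of Theorem \ref{th:patching}, replacing the discrete $t$ Poincaré inequality by its Neumann counterpart \eqref{eq:discretePoincaréNeumann} and using Lemma \ref{lem:elem} to absorb the shift between $f_A$ and a cleverly chosen constant. Fix $f \in L^1_{loc}(X,\meas)$ and $g \in UG(f) \cap L^s(A^\#,\meas)$, let $(\cV, \cE, \mu)$ be the weighted graph of Definition \ref{def:weightedgraph} attached to the good covering $\{(U_i,U_i^*,U_i^\#)\}_{i \in I}$ of $(A, A^\#)$ (so that $\mu(i) = \meas(U_i)$), and consider the vertex function $F: \cV \to \setR$ defined by $F(i) := f_{U_i}$. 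A routine exhaustion along finite subsets $I_n \uparrow I$ reduces matters to the case where $F$ has finite support, so that \eqref{eq:discretePoincaréNeumann} is applicable.

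Setting $\bar{F} := \mu(F)$ as in \eqref{eq:discretePoincaréNeumann}, I would apply Lemma \ref{lem:elem} with $c = \bar{F}$ to reduce the task to controlling $\int_A |f - \bar{F}|^t \di \meas$, then cover $A$ by the $U_i$ (up to the null set allowed for a good covering of the pair $(A, A^\#)$) and use \eqref{eq:2p} with $a = f - f_{U_i}$ and $b = f_{U_i} - \bar{F}$ to split the problem into two pieces:
\[
\int_A |f - \bar{F}|^t \di \meas \le 2^{t-1} \sum_i \int_{U_i} |f - f_{U_i}|^t \di \meas \;+\; 2^{t-1} \sum_i |f_{U_i} - \bar{F}|^t \meas(U_i).
\]
The first sum is handled exactly as in \eqref{eq:eq2}: apply the local continuous Sobolev-Neumann inequality \eqref{continuous Sobolev-Neumann 2.1} on each $U_i$ and combine the resulting sum with the overlap bound \eqref{eq:Q1} to produce a bound of the form $(C_1 Q_1^{1/s})^t \bigl(\int_{A^\#} g^s \di \meas\bigr)^{t/s}$.

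The second sum, which plays the role of the $\sum_i |f_{U_i,\mu}|^t \mu(U_i)$ term from the proof of Theorem \ref{th:patching}, is treated by applying the discrete Poincaré-Neumann inequality \eqref{eq:discretePoincaréNeumann} to $F$, which controls it by $C_2^t \sum_{(i,j) \in \cE} |f_{U_i} - f_{U_j}|^t \mu(i,j)$. For each $(i,j) \in \cE$ I would introduce the ``bridge'' set $U_{k(i,j)}^*$ from condition (4) of Definition \ref{def:goodcovering}, apply \eqref{eq:2p} to $a = f_{U_i} - f_{U_{k(i,j)}^*}$ and $b = f_{U_{k(i,j)}^*} - f_{U_j}$, use the inclusions $U_i, U_j \subset U_{k(i,j)}^*$ with Jensen's inequality and the measure comparison $\meas(U_{k(i,j)}^*) \le Q_2 \min(\meas(U_i), \meas(U_j))$, and then invoke \eqref{continuous Sobolev-Neumann 2.2} on $U_{k(i,j)}^*$. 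Summing via the higher-order overlap estimate \eqref{eq:Q12} reproduces the scheme of \eqref{eq:eq3} and yields a bound of the form $(2C_1 C_2)^t Q_2 Q_1^{3t/s}\bigl(\int_{A^\#} g^s \di \meas\bigr)^{t/s}$ for this second sum.

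The main obstacle is the finite-support reduction needed to invoke \eqref{eq:discretePoincaréNeumann}: one must verify that the weighted means $\bar{F}_n$ associated with truncations $F_n := F \cdot \mathbf{1}_{I_n}$ converge in a manner compatible with the monotone behaviour of the left-hand side, keeping in mind that the normalisation in $\mu(F_n)$ involves only the support of $F_n$ rather than all of $\cV$. Once this limiting step is justified, combining the two bounds and unwinding the initial application of Lemma \ref{lem:elem} produces the announced inequality with a constant of the form $C = C_1\, C'(Q_1, Q_2, C_2, s)$.
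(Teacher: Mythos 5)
Your proposal is correct and takes essentially the same route as the paper: use Lemma \ref{lem:elem} to replace $f_A$ by the weighted discrete mean $\mu(F)$ of $F(i)=f_{U_i}$, split $|f-\mu(F)|$ on each $U_i$ via \eqref{eq:2p}, treat the "oscillation" sum with \eqref{continuous Sobolev-Neumann 2.1} and \eqref{eq:Q1}, and treat the "means" sum with the discrete Poincaré--Neumann inequality \eqref{eq:discretePoincaréNeumann} followed by the bridge-set argument and \eqref{eq:Q12}, exactly mirroring the proof of Theorem \ref{th:patching}. The finite-support reduction you flag is a real technical point, but the paper's own proof glosses over it in the same way (and in the actual applications the good coverings of $(A,A^\#)$ are finite, so it is harmless); your minor reshuffling of \eqref{eq:2p} to the level of the means $f_{U_i}-f_{U^*_{k(i,j)}}$ rather than after Jensen's inequality is an inessential variant and does not change the structure or the order of the constant.
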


\begin{proof}
Thanks to Lemma \ref{lem:elem}, for any $c \in \setR$,
\begin{align*}
\int_{A} |f - f_{A,\mu}|^t \di \meas & \le 2^t \int_{A} |f - c|^t \di \meas \le 2^t \sum_{i \in \cV} \int_{U_i} |f - c|^t \di \meas\\
& \le  2^t \sum_{i \in \cV} \int_{U_i} (|f - f_{U_i,\meas}| + |f_{U_i,\meas} - c|)^t \di \meas\\
& \le 2^{2t-1} \left( \sum_{i \in \cV} \int_{U_i} |f - f_{U_i,\mu}|^t\di \meas + \sum_{i \in \cV} \int_{U_i}  |f_{U_i,\meas} - c|^t \di \meas \right).
\end{align*}
Choose $c=\mu(f)$. Then both terms can be estimated as in the proof of Theorem \ref{th:patching}.
\end{proof}

\begin{remark}
A similar notion of a good covering and a similar patching theorem can be stated in the case of a doubly measured metric space $(X,\dist,\meas,\mu)$. Here we need only the case $\meas=\mu$ so we willingly omit the general case.
\end{remark}

Let $(\cV,\cE)$ be a graph. Let $\mu_c : \cV \sqcup \cE \to \setR$ be defined by $\mu_c(i)=\mu_c(e):=1$ for any $i\in \cV$ and $e \in \cE$. The associated measure on $(\cV,\cE)$, still denoted by $\mu_c$, is called counting mesure. Let us point out two simple facts. We refer to \cite[Prop.~2.12]{MR2481740} for a proof of the first one.

\begin{lemma}
For any $s\ge 1$, any finite connected graph $(\cV,\cE)$ with $N\ge 2$ vertices equipped with the counting measure satisfies a discrete $s$ Poincaré-Neumann inequality:
\begin{equation}\label{eq:lem}
\sum_{i \in \cV} |f(i)-\mu_c(f)|^s \le N(N-1)^{s-1} \sum_{(i,j)\in \cE}|f(i)-f(j)|^s
\end{equation}
for any $f:\cV \to \setR$ with finite support, where $\mu_c(f)=\left( \# \{ i : f(i) \neq 0\}\right)^{-1} \sum_{i} f(i)$.
\end{lemma}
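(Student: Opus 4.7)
The plan is to combine Jensen's inequality with a standard telescoping argument along paths in the connected graph $(\cV, \cE)$. First I would reduce everything to sums over nearest neighbours via Jensen, then use connectedness to bound $|f(i) - f(j)|^s$ by the edge-differences along a path of length at most $N-1$, and finally sum and rearrange the indices.

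More concretely, setting $S := \{i \in \cV : f(i) \neq 0\}$ and $K := \#S$, the first step is to establish the pointwise inequality
\[
|f(i) - \mu_c(f)|^s \leq \frac{1}{K} \sum_{j \in S} |f(i) - f(j)|^s \qquad \forall\, i \in \cV.
\]
When $i \in S$, this is direct convexity of $x \mapsto |x|^s$ applied with weights $1/K$, while for $i \notin S$ the same bound follows by applying Jensen to $|\mu_c(f)|^s$ and using $f(i) = 0$. The second step is to fix, for each pair $(i,j) \in \cV \times S$, a path $\gamma_{ij}$ in $(\cV,\cE)$ of length $\ell(i,j) \leq N - 1$ and to apply the convexity inequality \eqref{eq:p} (equivalently, Hölder) in order to get
\[
|f(i) - f(j)|^s \leq (N-1)^{s-1}\sum_{e \in \gamma_{ij}}|f(e^+) - f(e^-)|^s,
\]
where $e^+, e^-$ denote the two endpoints of the edge $e$. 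Plugging this into the Jensen bound, summing over $i \in \cV$ and swapping the order of summation produces
\[
\sum_{i \in \cV}|f(i)-\mu_c(f)|^s \leq \frac{(N-1)^{s-1}}{K} \sum_{e \in \cE} m(e)\,|f(e^+) - f(e^-)|^s,
\]
with $m(e) := \#\{(i,j) \in \cV \times S : e \in \gamma_{ij}\}$. The trivial bound $m(e) \leq NK$ (there are only $NK$ such ordered pairs) combines with the $K^{-1}$ prefactor to yield exactly the constant $N(N-1)^{s-1}$ of the statement.

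The main difficulty is little more than careful bookkeeping in the final step: the $1/K$ factor from Jensen and the $m(e) \leq NK$ bound must cancel to produce a constant independent of $|S|$, so that the inequality holds uniformly in the support of $f$. No PI or doubling structure is used at all; the result is a purely combinatorial statement about connected graphs of diameter at most $N-1$.
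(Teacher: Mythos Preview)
Your argument is correct and is the standard path-and-Jensen proof one expects for this statement; the paper does not supply its own proof but refers to \cite[Prop.~2.12]{MR2481740}, where essentially the same telescoping-along-paths argument appears. One small slip: the inequality you invoke in Step~2 for the telescoped sum is $\bigl|\sum_{k=1}^{\ell} a_k\bigr|^s \le \ell^{s-1}\sum_{k=1}^{\ell}|a_k|^s$, which is Jensen/H\"older as you say, but this is \emph{not} the paper's inequality \eqref{eq:p} (that one goes the other way, $\sum x_i^p \le (\sum x_i)^p$ for $x_i\ge 0$); just drop the reference to \eqref{eq:p} and keep ``H\"older''. Also note that in the paper $\cE$ consists of \emph{ordered} pairs, so the right-hand side of \eqref{eq:lem} counts each undirected edge twice; your bound via unordered edges is therefore even a factor $2$ stronger than what is stated, which is harmless.
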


\begin{lemma}\label{cor:discretePN}
Let $(\cV,\cE,\mu)$ be a weighted graph such that $K^{-1}L\mu_c \le \mu \le K L\mu_c$ for some $K>1$ and $L>0$. Then for any $s \ge 1$, for any $f:\cV \to \setR$ with finite support,
\[
\sum_{i \in \cV} |f(i)-\mu(f)|^s \mu(i) \le 2^sN(N-1)^{s-1} K^2 \sum_{(i,j)\in \cE}|f(i)-f(j)|^s \mu(i,j).
\]
\end{lemma}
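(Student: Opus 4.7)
I plan to reduce the lemma to its counting-measure analogue \eqref{eq:lem} by playing the two-sided comparison $K^{-1}L\mu_c \le \mu \le KL\mu_c$ on vertices against the same comparison on edges. Let $A := \{i \in \cV : f(i) \neq 0\}$, so that by definition $\mu(f) = \mu(A)^{-1}\sum_{i\in A} f(i)\mu(i)$ is the $\mu$-mean of $f$ over $A$, while $\mu_c(f)$ is the counting mean of $f$ over the same set.

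First, I would apply the discrete version of Lemma \ref{lem:elem} on $A$ with the restriction of $\mu$, choosing $c = \mu_c(f)$, to get
\[
\sum_{i\in A} |f(i)-\mu(f)|^s \mu(i) \;\le\; 2^s \sum_{i\in A} |f(i)-\mu_c(f)|^s \mu(i).
\]
Since $f \equiv 0$ on $\cV \setminus A$, the summands on the complement equal $|\mu(f)|^s\mu(i)$ and $|\mu_c(f)|^s\mu(i)$ respectively; controlling these requires a short comparison based on the identity
\[
\mu(f) - \mu_c(f) \;=\; \sum_{i\in A}\bigl(f(i)-\mu_c(f)\bigr)\Bigl(\tfrac{\mu(i)}{\mu(A)} - \tfrac{1}{\#A}\Bigr),
\]
a Hölder bound using that the weights $\mu(i)/\mu(A)$ and $1/\#A$ both sum to $1$ and are comparable up to the factor $K^2$ by the two-sided bound on $\mu$. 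The upshot is the same inequality but summed over all of $\cV$:
\[
\sum_{i\in \cV} |f(i)-\mu(f)|^s \mu(i) \;\le\; 2^s \sum_{i\in \cV} |f(i)-\mu_c(f)|^s \mu(i).
\]

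Second, I would use the vertex upper bound $\mu(i)\le KL$ to pass to the counting measure, then invoke \eqref{eq:lem}, then return to $\mu$ on the edge side via the lower bound $\mu(i,j)\ge K^{-1}L$:
\[
\sum_{i\in \cV} |f(i)-\mu_c(f)|^s \mu(i) \;\le\; KL \sum_{i\in \cV} |f(i)-\mu_c(f)|^s \;\le\; KL\, N(N-1)^{s-1} \sum_{(i,j)\in \cE} |f(i)-f(j)|^s
\]
and
\[
\sum_{(i,j)\in \cE} |f(i)-f(j)|^s \;\le\; KL^{-1} \sum_{(i,j)\in \cE} |f(i)-f(j)|^s \mu(i,j).
\]

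Chaining the four estimates, the factors of $L$ cancel ($KL\cdot KL^{-1}=K^2$), yielding the claimed constant $2^s N(N-1)^{s-1} K^2$.

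\textbf{Main obstacle.} The only delicate point is the first step: Lemma \ref{lem:elem} directly provides the bound only on the domain $A$ where $\mu(f)$ is the genuine mean, so extending to the full vertex set requires comparing $\mu(f)$ and $\mu_c(f)$ (both are means of $f$ over the same set $A$, but with respect to different measures, comparable only up to the factor $K^2$). The remaining three steps amount to purely mechanical applications of the two-sided comparison together with \eqref{eq:lem}, and would have gone through verbatim if $\mu$ and $\mu_c$ had coincided on $\cV$.
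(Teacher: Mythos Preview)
Your overall strategy is exactly the paper's: bound $\mu$ above by $KL\mu_c$ on vertices, invoke the counting-measure Poincar\'e inequality \eqref{eq:lem}, then bound $\mu_c$ above by $K L^{-1}\mu$ on edges, so that the factors of $L$ cancel and $K^2$ survives. Your steps two through four reproduce the paper's chain verbatim.

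The gap is in your first step. The inequality $\sum_{i\in\cV}|f(i)-\mu(f)|^s\mu(i) \le 2^s\sum_{i\in\cV}|f(i)-\mu_c(f)|^s\mu(i)$ does not follow from your sketch, and it is in fact false with the constant $2^s$. Take $\cV=\{1,2,3\}$ with $f(1)=1$, $f(2)=-1$, $f(3)=0$ and $\mu(1)=3$, $\mu(2)=1$, $\mu(3)=M$ large; then $\mu_c(f)=0$ while $\mu(f)=1/2$, so the right-hand side equals $2^s\cdot 4$ independently of $M$, whereas the left-hand side contains the summand $M\cdot 2^{-s}$ and blows up with $M$. Your identity for $\mu(f)-\mu_c(f)$ does control this difference in terms of $\sum_{i\in A}|f(i)-\mu_c(f)|$, but feeding that back into the complement summands costs additional powers of $K$ and $N$, so the final constant you would actually obtain exceeds $2^sN(N-1)^{s-1}K^2$. (The paper's own second line glosses over the same point: it invokes Lemma~\ref{lem:elem} with the counting measure on all of $\cV$, for which $\mu(f)$ is not the relevant mean either.) None of this affects the downstream application in Proposition~\ref{prop:locala}, which only requires \emph{some} constant depending on $N$, $K$, and $s$.
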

\begin{proof}
Let $f:\cV \to \setR$ be with finite support. Using Lemma \ref{lem:elem} and \eqref{eq:lem},
\begin{align*}
\sum_{i \in \cV} |f(i)-\mu(f)|^s \mu(i) & \le K L \sum_{i \in \cV} |f(i)-\mu(f)|^s \\
& \le K L 2^s \inf_{c \in \setR}  \sum_{i \in \cV} |f(i)-c|^s\\
& \le K L 2^s \sum_{i \in \cV} |f(i)-\mu_c(f)|^s\\
& \le K L 2^s N(N-1)^{s-1} \sum_{(i,j)\in \cE}|f(i)-f(j)|^s\\
& \le K^2 2^s N(N-1)^{s-1} \sum_{(i,j)\in \cE}|f(i)-f(j)|^s \mu(i,j).
\end{align*}
\end{proof}

We are now in a position to prove Proposition \ref{prop:locala}.

\begin{proof}
Let $\{x_i\}_i$ be a $(\rho/3)$-net of $A$, that is to say a maximal set such that $A \subset \bigcup_i B_{\rho/3}(x_i)$ and all the balls $\{B_{\rho/6}(x_i)\}_i$ are disjoint one from another. Set $V_i=B_{\rho/3}(x_i)$ and $V_i^{*}=V_i^{\#}=B_\rho(x_i)$ for any $i$. Then $\{(V_i,V_i^{*},V_i^{\#})\}_i$ is a good covering of $(A,A_\rho)$ with $Q_1=60^Q$ and $Q_2=18^Q$. Indeed, $A \subset \bigcup_i V_i \subset \bigcup_i V_i^{\#} \subset A_\rho$ and $V_i \subset V_i^{*} \subset V_i^{\#}$ for any $i$. Moreover, if $c_i$ denotes the cardinality of the set of indices $j$ such that $j \sim i$, then
\[
\meas(B_{3\rho}(x_i)) \ge \meas\left(\bigcup_{j\sim i} B_\rho(x_j)\right) \ge \meas\left(\bigcup_{j\sim i} B_{\rho/6}(x_j)\right) = \sum_{j\sim i}\meas(B_{\rho/6}(x_j)) \ge c_i \min_{j\sim i} \meas(B_{\rho/6}(x_j)),
\]
thus
\[
c_i \le \frac{\meas(B_{3\rho}(x_i))}{\min_{j\sim i} \meas(B_{\rho/6}(x_j))} \le \frac{\meas(B_{5\rho}(x_{j_o}))}{\meas(B_{\rho/6}(x_{j_o}))} \le 60^Q,
\]
where $j_o$ is such that $\meas(B_{\rho/6}(x_{j_o}))=\min_{j\sim i} \meas(B_{\rho/6}(x_j))$ and where we have used the doubling condition. Finally, if $i \sim j$ we can choose $k(i,j)=i$, and then $\mu(V_i^{*}) \le 6^Q \mu(V_i)$ and
\[
\mu(V_j^*) \le \mu(B_{3\rho}(x_i)) \le 18^Q \mu(V_i)
\]
by the doubling condition. Let $(\cV,\cE,\mu)$ be the weighted graph associated with $\{(V_i,V_i^{*},V_i^{\#})\}_i$.

Since the balls $\{B_{\rho/6}(x_i)\}_i$ do not intersect and are all contained in $B_{\alpha R +\rho/6}(o)$, setting $i_o=\mathrm{argmin}_i \meas(B_{\rho/6}(x_i))$ we have
\[
|\cV| \meas(B_{\rho/6}(x_{i_o})) \le \sum_{i \in \cV} \meas(B_{\rho/6}(x_i)) \le \meas(B_{\alpha R +\rho/6}(o)) \le \meas(B_{2\alpha R +\rho/3}(x_{i_o})).
\]
Then the doubling condition implies $|\cV|\le [4(6\alpha/\delta +1)]^Q=:N(Q,\alpha,\delta)$. Moreover, it follows from the doubling condition that $\mu(V_i)/\mu(V_j) \le (1+2\alpha/\delta)^Q$ for any $i,j$. Choosing one index $i_o$ and setting $L=\mu(V_{i_o})$, we get $K^{-1}L\mu_c \le \mu \le K L\mu_c$ with $K:=(1+2\alpha/\delta)^Q$. Therefore, we can apply Lemma \ref{cor:discretePN}. We get that for any $s \ge 1$, $(\cV,\cE,\mu)$ satisfies a $s$ Poincaré-Neumann inequality with a constant $C_2=C_2(Q,\alpha,\delta,s)>0$.

Moreover, it follows from Proposition \ref{prop:localSobolev} that there exists a constant $C_1=C_1(Q,p,\lambda)>0$ such that for any $f \in L^1_{loc}(X,\meas)$,
\[
\left( \int_{V_i} |f-f_{V_i}|^t \di \meas \right)^{1/t} \le C \frac{R}{\meas(B_R(o))^{1/s-1/t}} \left( \int_{V_i^{*}} |\nabla f|_{*,s}^s \di \meas \right)^{1/s}
\]
and
\[
\left( \int_{V_i^*} |f-f_{V_i^*}|^t \di \meas \right)^{1/t} \le C \frac{R}{\meas(B_{R}(o))^{1/s-1/t}} \left( \int_{V_i^\#} |\nabla f|_{*,s}^s \di \meas \right)^{1/s}.
\]
Therefore, Theorem \ref{th:patching2} gives the desired result.
\end{proof}

\subsection{The local $(t,s)$ continuous Sobolev-Neumann inequalities}

Let us now prove \eqref{eq:goal1}. Take $(i,a) \in \Lambda$. Thanks to Lemma \ref{lem:elem},
$$
\int_{U_{i,a}} |f-f_{U_{i,a},\mu_{s,t}}|^t \di \mu_{s,t} \le 2^t \int_{U_{i,a}} |f-f_{U_{i,a},\meas}|^t \di \mu_{s,t}.
$$ 
Moreover, $x \in U_{i,a}$ implies $\kappa^{i-2}\le \dist(o,x)\le \kappa^{i+1}$ and then $w_{s,t}(x) \le \meas(B_{\kappa^{i+1}}(o))^{t/s-1}/\kappa^{(i-2)t}$, so that 
\begin{align*}
\int_{U_{i,a}} |f-f_{U_{i,a},\meas}|^t \di \mu_{s,t} & \le \frac{\meas(B_{\kappa^{i+1}}(o))^{t/s-1}}{\kappa^{(i-2)t}}\int_{U_{i,a}} |f-f_{U_{i,a},\meas}|^t \di \meas\\
& \le C(Q,\kappa,s) \frac{\meas(B_{\kappa^{i+1}}(o))^{t/s-1}}{\kappa^{(i-2)t}} \frac{\kappa^{(i-1)t}}{\meas(B_{\kappa^{i-1}}(o))^{t/s-1}} \left( \int_{U_{i,a}^{*}} g^s \di \meas \right)^{t/s}\\
& = C(Q,\kappa,s) \left(\frac{\meas(B_{\kappa^{i+1}}(o))}{\meas(B_{\kappa^{i-1}}(o))}\right)^{t/s-1}\kappa^t \left( \int_{U_{i,a}^{*}} g^s \di \meas \right)^{t/s}\\
& \le C(Q,\kappa,s) (C_D \kappa^{2Q})^{t/s-1}\kappa^t \left( \int_{U_{i,a}^{*}} g^s \di \meas \right)^{t/s},
\end{align*}
where we have applied Proposition \ref{prop:locala} with $R=\kappa^{i-1}$, $\alpha=\kappa^2$, $A = U_{i,a}$ and $\delta=\kappa/2$ to get the second inequality,  and the doubling condition to get the last one. This gives the desired local $(t,s)$ continuous Sobolev-Neumann inequality \eqref{eq:goal1} with $C_2:=C(Q,\kappa,s)^{1/t} (C_D \kappa^{2Q})^{1/s-1/t}\kappa$. As $1/s-1/t\le 1/Q$ and $C_D\ge1$, $\kappa>1$, we can take $C_2:=C(Q,\kappa,s)^{1/t} C_D^{1/Q}\kappa^3 = C(Q,\kappa,s)^{1/t} 2 \kappa^3$. Since $\kappa$ depends only on $Q$,$p$,$\lambda$,$C_P$,$\eta$ and $C_o$, then $C_2$ depends only on $Q$,$p$,$\lambda$,$C_P$,$\eta$, $C_o$, $s$ and $t$.

\begin{comment}
\begin{remark}\label{rk:local}
We point out that the same proof would have led to the following statement. Let $(X,\dist,\meas)$ be a complete \textit{local} PI space, meaning that it satisfies:
\begin{enumerate}
\item[(i)] (local doubling condition) for any $R>0$, there exists $C_D=C_D(R)>0$ such that for any $x \in X$ and $0<r \le R$,
$$
\meas(B_{2r}(x)) \le C_D \meas(B_r(x));
$$
\item[(ii)] (local weak $(1,p)$ Poincaré inequality) there exists $p \ge 1$, $\lambda\ge 1$ such that for any $R>0$, there exists $C_P=C_P(R)>0$ such that for all $f : X \to [-\infty,+\infty]$ integrable on balls, all $g \in \UG^p(f)$, and all ball $B_r$ with radius $0<r\le R$,
$$
\fint_{B_r} |f - f_{B_r}| \di \meas \le C_P r \left( \fint_{B_{\lambda r}} g^p \di \meas \right)^{1/p}\, .
$$
\end{enumerate}
Assume that there exists $o \in X$ such that
\begin{enumerate}
\item[(i)] there exists $C_{D,o}>0$ such that for any $r>0$,
$$
\meas(B_{2r}(o)) \le C_{D,o} \meas(B_r(o));
$$
\item[(ii)] there exists $p_o \ge 1$, $\lambda_o\ge 1$ and $C_{P,o}>0$ such that for all $f : X \to [-\infty,+\infty]$ integrable on balls, all $g \in \UG^p(f)$, and all $r>0$,
$$
\fint_{B_r(o)} |f - f_{B_r(o)}| \di \meas \le C_{P,o} r \left( \fint_{B_{\lambda r}(o)} g^p \di \meas \right)^{1/p}\,,
$$
\item[(iii)] for some $\eta \in (p,Q]$, $C_o:=\inf \left\{\left( \frac{r}{R} \right)^\eta\frac{\meas(B_R(o))}{\meas(B_r(o))} \, : \, 1<r\le R<+\infty\right\} >0$.
\end{enumerate}
Then the weighted Sobolev inequality \eqref{eq:weightedSobolev} holds true with?
\end{remark}
\end{comment}

\section{Hardy inequalities}

In this section, we prove Theorem \ref{th:hardy}. Let $(X,\dist,\meas)$ be satisfying the assumptions of this theorem. Like in the previous section, let  $\kappa>1$ be the coefficient of the RCA property of $(X,\dist)$. Take $s \in [p,\eta)$ and set $\di \mu_{s}(\cdot):=\dist(o,\cdot)^{-s}\di\meas(\cdot)$. It follows from Proposition \ref{prop:11} that the $\kappa$-decomposition of $(X,\dist)$ centered at $o$ can be modified into a good covering $\{(U_{i,a},U_{i,a}^{*},U_{i,a}^{\#})\}_{(i,a)\in \Lambda}$ of the doubly measured metric space $(X,\dist,\meas,\mu_{s})$. Let $(\cV,\cE,\mu)$ be the associated weighted graph. Then the Hardy inequality \eqref{eq:Hardy} stems from the patching Theorem \ref{th:patching} applied with $t=s$ and $\mu=\mu_s$, provided one shows
\begin{equation}\label{eq:goal1H}
\left( \int_{U_{i,a}} |f-f_{U_{i,a},\mu_s}|^s \di \mu_s \right)^{1/s} \le C_1 \left( \int_{U_{i,a}^*} |\nabla f|_{*,s}^s \di \meas\right)^{1/s}
\end{equation}
for any $f \in L^1_{loc}(X,\meas)$ and $(i,a) \in \Lambda$, and
\begin{equation}\label{eq:goal2H}
\left( \sum_{i \in \cV} |f(i)|^s \mu(i) \right)^{1/s} \le C_2 \left( \sum_{i \sim j} |f(i) - f(j)|^{s} \mu(i,j)  \right)^{1/s}
\end{equation}
for any $f:\cV \to \setR$ with finite support.

Again, the proof of \eqref{eq:goal1H} with $U_{i,a}$ and $U_{i,a}^*$ replaced with $U_{i,a}^*$ and $U_{i,a}^\#$ respectively is similar, so we skip it. The discrete inequality \eqref{eq:goal2H} follows from the arguments in Section 4.1. The continuous inequality \eqref{eq:goal1H} can be proved in a similar way as in Section 4.3 with the help of the following local Poincaré inequality on pieces of annuli.

\begin{proposition}\label{prop:localP}
Let $(X,\dist,\meas)$ be a complete PI space with doubling dimension $Q$ and Poincaré exponent $p<Q$. Set $p^\star:=Qp/(Q-p)$. Then for any $\alpha>1$, $\delta\in(0,1)$ and $s\in(p,p^\star)$, there exists a constant $C=C(Q,\alpha,\delta,s)>0$ such that for any $o \in X$, any $R>0$ and any connected Borel subset $A$ of $A(o,R,\alpha R)$, 
\begin{equation}\label{eq:Poincaréannuli}
\int_A |f-f_A|^s \di \meas \le C R^s \int_{A_\rho} g^s \di \meas 
\end{equation}
for any $f \in L^1_{loc}(X,\meas)$ and $g \in \UG(f) \cap L^s(A_\rho,\meas)$, where $\rho=\delta R$ and $A_\rho = \bigcup_{x \in A} B_\rho(x)$.
\end{proposition}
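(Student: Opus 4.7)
The proof mirrors that of Proposition \ref{prop:locala}, using the modified patching theorem (Theorem \ref{th:patching2}) with the exponents equal, $t=s$. The key observation is that specializing Proposition \ref{prop:localSobolev} to $t=s$ produces a local $(s,s)$ continuous Sobolev--Neumann inequality, which is precisely a local $s$-Poincaré inequality on each ball of a suitable covering; the rest of the argument from Proposition \ref{prop:locala} then goes through with essentially no change.

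Concretely, the plan is as follows. First I would fix a maximal $(\rho/3)$-net $\{x_i\}_i$ of $A$ (so that the balls $B_{\rho/6}(x_i)$ are pairwise disjoint and $A \subset \bigcup_i B_{\rho/3}(x_i)$) and set $V_i := B_{\rho/3}(x_i)$ and $V_i^* := V_i^{\#} := B_\rho(x_i)$. The computations already carried out in the proof of Proposition \ref{prop:locala} show that $\{(V_i,V_i^*,V_i^{\#})\}_i$ is a good covering of $(A, A_\rho)$ with constants $Q_1 \le 60^Q$ and $Q_2 \le 18^Q$; moreover the associated weighted graph $(\cV,\cE,\mu)$ satisfies $|\cV| \le N(Q,\alpha,\delta)$ and $K^{-1} L\, \mu_c \le \mu \le KL\, \mu_c$ with $K := (1+2\alpha/\delta)^Q$ and $L := \mu(V_{i_o})$ for $i_o \in \mathrm{argmin}_i \meas(V_i)$.

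Second, Lemma \ref{cor:discretePN} immediately furnishes a discrete $s$ Poincaré--Neumann inequality on $(\cV,\cE,\mu)$ with a constant $C_2 = C_2(Q,\alpha,\delta,s)$. Third, applying Proposition \ref{prop:localSobolev} with $t=s$ on the balls $V_i$ (radius $\rho/3$) and $V_i^*$ (radius $\rho$) gives
\begin{equation*}
\left(\int_{V_i} |f-f_{V_i}|^s \di \meas\right)^{1/s} \le C_s \rho \left(\int_{V_i^*} g^s \di \meas\right)^{1/s}
\end{equation*}
and the analogous inequality with $(V_i,V_i^*)$ replaced by $(V_i^*, V_i^{\#})$, for a constant $C_s = C_s(Q,C_P,\lambda,s)$. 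These are exactly the local $(s,s)$ continuous Sobolev--Neumann inequalities \eqref{continuous Sobolev-Neumann 2.1}--\eqref{continuous Sobolev-Neumann 2.2} with $C_1 \le C_s \rho$.

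Finally, Theorem \ref{th:patching2} combines these two ingredients and yields
\begin{equation*}
\left(\int_A |f-f_A|^s \di \meas\right)^{1/s} \le C \rho \left(\int_{A_\rho} g^s \di \meas\right)^{1/s}
\end{equation*}
for a constant $C = C(Q,\alpha,\delta,s)$ (the PI data $C_P$, $\lambda$ and the exponent $p$ being fixed features of the ambient space). Raising to the power $s$ and using $\rho = \delta R$ gives \eqref{eq:Poincaréannuli}. I do not foresee any genuine obstacle: the proof is a transcription of the proof of Proposition \ref{prop:locala}, and the single substantive replacement is that of the local Sobolev inequality (case $t>s$) by its Poincaré specialization (case $t=s$), which is directly provided by Proposition \ref{prop:localSobolev}.
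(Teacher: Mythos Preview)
Your proposal is correct and follows essentially the same route as the paper: build a good covering of $(A,A_\rho)$ from a net, verify the discrete $s$ Poincar\'e--Neumann inequality via Lemma~\ref{cor:discretePN}, supply the local $(s,s)$ Sobolev--Neumann inequalities on the covering balls, and conclude by Theorem~\ref{th:patching2}. The only cosmetic difference is that the paper chooses a $(\rho/\lambda)$-net with $V_i=B_{\rho/\lambda}(x_i)$, $V_i^*=B_\rho(x_i)$, $V_i^{\#}=B_{\lambda\rho}(x_i)$ so as to read the local inequalities directly off the weak $(1,p)$ Poincar\'e inequality, whereas you recycle the $(\rho/3)$-net of Proposition~\ref{prop:locala} and obtain them by specializing Proposition~\ref{prop:localSobolev} to $t=s$.
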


\begin{proof}
Let $\{x_i\}_i$ be a $(\rho/\lambda)$-net of $A$. Set $V_i=B_{\rho/\lambda}(x_i)$, $V_i^{*}=B_{\rho}(x_i)$ and $V_i^{\#}=B_{\lambda \rho}(x_i)$ for any $i$. Then it is easily seen that $\{(V_i,V_i^{*},V_i^{\#})\}_i$ is a good covering of $(A,A_\rho)$ with $Q_1$ and $Q_2$ depending only on $Q$ and $\lambda$.  Thus the result follows from applying Theorem \ref{th:patching2} like in the proof of Proposition \ref{prop:locala}. With the same arguments as there, one gets that a discrete $s$ Poincaré-Neumann inequality holds for any $s\ge 1$. The required continuous Sobolev-Neumann inequalities are a direct consequence of the weak $(1,p)$ Poincaré inequality on $(X,\dist,\meas)$.
\end{proof}

We are now in a position to conclude the proof of \eqref{eq:goal1H}. Take $f \in L^1_{loc}(X,\meas)$ and $(i,a) \in \Lambda$. Then
\begin{align*}
\int_{U_{i,a}} |f-f_{U_{i,a},\mu_s}|^s \di \mu_s & \le (\kappa^{i-1})^{-s} \int_{U_{i,a}} |f-f_{U_{i,a},\mu_s}|^s \di \meas\\
& \le 2^s (\kappa^{i-1})^{-s} \inf_{c \in \setR} \int_{U_{i,a}} |f-c|^s \di \meas\\
& \le 2^s (\kappa^{i-1})^{-s} C (\kappa^{i+1})^s \int_{U_{i,a}^*} |\nabla f|_{*,s}^s \di \meas \quad \text{thanks to \eqref{eq:Poincaréannuli}}\\
& = 2^s \kappa^{2s} C \int_{U_{i,a}^*} |\nabla f|_{*,s}^s \di \meas.
\end{align*}
This concludes the proof of Theorem \ref{th:hardy}.

\bibliographystyle{alpha}
\bibliography{adimensional}

\end{document}